\newcommand{\R}{\mathbb R}
\newcommand{\N}{\mathbb N}
\newcommand{\eps}{\varepsilon}
 \newcommand{\RR}{\mathbb{R}}
\newcommand{\EX}{\mathbf{E}}
\newcommand{\X}{ \overline{X}}
\newcommand{\Y}{ \overline{Y}}
\newcommand{\Z}{ \overline{Z}}
\newtheorem{theorem}{Theorem}[section]
\newtheorem{corollary}[theorem]{Corollary}
\newtheorem{example}[theorem]{Example}
\newtheorem{lemma}[theorem]{Lemma}
\newtheorem{proposition}[theorem]{Proposition}
\theoremstyle{remark}
\begin{document}
\title[Convergence of Numerical Methods for SDEs in Finance]{Convergence of Numerical Methods for Stochastic Differential Equations in Mathematical Finance}
\author{Peter Kloeden and Andreas Neuenkirch}

\address{Peter Kloeden, Institut f\"ur Mathematik, Johann Wolfgang Goethe-Universit{\"a}t,
Robert-Mayer-Strasse 10, D-60325 Frankfurt am Main, Germany     \\
{\tt kloeden@math.uni-frankfurt.de}}


\address{Andreas Neuenkirch, Institut f\"ur Mathematik, Universit\"at Mannheim, A5,6, D-68131 Mannheim, Germany\\
{\tt neuenkirch@kiwi.math.uni-mannheim.de}}


\date{\today}

\begin{abstract}
Many  stochastic differential equations that occur in financial modelling do not satisfy the standard assumptions made in convergence proofs 
of numerical schemes that are given in textbooks, i.e., their coefficients and the corresponding derivatives appearing in the proofs 
are not uniformly bounded and hence, in particular, not globally Lipschitz.  Specific examples are the Heston and Cox-Ingersoll-Ross models with square root coefficients 
and the Ait-Sahalia model with rational coefficient functions. Simple examples show that, for example,  the Euler-Maruyama scheme may not converge either 
in the strong or weak sense when the standard assumptions do not hold. Nevertheless, new convergence results have been obtained recently for many such  models 
in financial mathematics. These are reviewed here. Although weak convergence is of traditional importance in financial mathematics with its emphasis on 
expectations of functionals of the solutions, strong convergence  plays a crucial role in Multi Level Monte Carlo methods, so it and also pathwise convergence 
will  be considered along with methods which preserve the positivity of the solutions.
 \end{abstract}

\maketitle

\section{Introduction}

Consider the It\^{o} stochastic differential equation (SDE) in $\mathbb{R}^{d}$
\begin{equation}\label{itosde}
dX_t = a(X_t)dt + \sum_{j=1}^{m} b_{j}(X_t)  dW^{(j)}_t,  \quad t \in [0,T], \qquad X_0=x_0 \in \mathbb{R}^d
\end{equation}
with drift and diffusion coefficients $a$, $b_{j}$ $:$ $\mathbb{R}^{d}$ $\rightarrow$ $\mathbb{R}^{d}$
for $j=1, \ldots, m$. Here  $W_t$ $=$ $(W_t^{(1)}, 
\ldots, W_t^{(m)})$, $t \geq 0$, is  an $m$-dimensional   Brownian motion on a probability space $(\Omega, \mathcal{F}, \mathbf{P})$ and  superscripts in brackets
 label components of vectors.  Throughout this article it will always be assumed that equation (\ref{itosde}) has a  unique strong solution.

Explicit solutions of such equations are rarely known, thus one has to rely on numerical methods to simulate their sample paths $X_t(\omega)$ or to estimate functionals $\mathbf{E}\Phi(X)$ 
for some  $\Phi: C([0,T];\mathbb{R}^d) \rightarrow \mathbb{R}$. Typically, such a numerical method   relies on a discretization
 $$0 \leq t_1 \leq t_2 \leq  \ldots \leq t_n =T$$  
 and a global approximation on $[0,T]$ is obtained by interpolation.
 
In the case
of the  classical weak approximation  the
error of an approximation $\overline{X}$ to $X$  is  measured by the quantity
$$     |\mathbf{E} \phi(X_T) - \mathbf{E} \phi(\overline{X}_T) | $$
for smooth functions  $\phi: \mathbb{R}^{d} \rightarrow \mathbb{R}$. The test functions $\phi$ are a particular case of the general (path-dependent) functionals $\Phi$.
In the strong approximation problem  the $p$-th mean of the difference between $X$
and $\overline{X}$  is analyzed, i.e.
$$    \Big(\mathbf{E} \sup_{k=0, \ldots, n} | X_{t_{k}} - \overline{X}_{t_{k}} |^{p} \Big)^{1/p}$$
for the maximal error in the discretization points or
$$    \Big(\mathbf{E} \sup_{t \in [0,T]} | X_{t} - \overline{X}_{t} |^{p} \Big)^{1/p}$$
for the global error,
where $p \geq 1$ and $| \cdot |$ denotes the Euclidean norm. Here
 the mean-square error, i.e. $p=2$,  is usually studied.  The  recent development of the Multi-level Monte Carlo method for SDEs \cite{g1,g2} has revealed that strong error bounds are crucial for the efficient computation of functionals  $\mathbf{E}\Phi(X)$.

While the strong error measures the error of the approximate sample paths $\overline{X}$ on average,
the pathwise error is the random quantity   
$$   \sup_{k=0, \ldots, n}|X_{t_{k}}(\omega) - \overline{X}_{t_{k}}(\omega)|, \qquad \omega \in \Omega$$ and
$$   \sup_{t \in [0,T] }|X_{t}(\omega) - \overline{X}_{t}(\omega)|, \qquad \omega \in \Omega$$ respectively.
  Here the error is analyzed
for a fixed $\omega \in \Omega$ without averaging. This quantity thus gives the error of the actually calculated   approximation  $\overline{X}_{t_k}(\omega)$, $k=0, \ldots,n$, respectively $\overline{X}(\omega)$.

The traditional weak and strong convergence analysis for numerical methods for stochastic differential equations (SDEs) relies on the global Lipschitz assumption, i.e. the SDE coefficients satisfy
\begin{align*}
|a(x)-a(y)| + \sum_{j=1}^m |b_{j}(x)-b_j(y)| \leq L \cdot |x-y|, \qquad x,y \in \mathbb{R}^d            
\end{align*}
 for some $L>0$.
 However, in many SDEs used for modelling in mathematical finance
 this assumption is violated, so the standard results (see \cite{KP,M})
 do not  apply. \\

The Constant Elasticity of Variance Model for asset prices \cite{cox:1975} , which was introduced by Cox in 1975, is given by the SDE
\begin{align*} 
dS_t=\mu S_t \, dt + \sigma S_t^{\gamma} \, dW_t, \qquad S_0=s_0>0
\end{align*}
where $\mu \in \R$, $\sigma  >0$ and $\gamma \in (0,1]$ and $W_t, t \geq 0,$ is a one-dimensional Brownian motion. For $\gamma =1$ this is the standard Black-Scholes model (i.e. a geometric Brownian motion),
while for  $\gamma \in (0,1)$ the diffusion coefficient of this SDE is clearly not globally Lipschitz continuous. 
 This SDE has a unique strong solution if and only if $\gamma \in   [1/2,1]$ and takes values in $[0,\infty)$.

The Ait-Sahalia model and its generalization  \cite{AS,SMHJ}, which are  stochastic interest rate models,
follow the dynamics
\begin{align*}
dX_t =  \big(\alpha_{-1} X_t^{-1} - \alpha_0 + \alpha_1 X_t - \alpha_2X_t^r \big)dt +\sigma X_t^{\rho} dW_t, \qquad X_0=x_0>0
\end{align*}
where  $ \alpha_{i}, \sigma, r, \rho > 0$, $i=-1, \ldots, 2$. Under certain conditions on the parameters (see \cite{SMHJ}), this SDE has a unique strong solution with values in $(0, \infty)$.  Note that here the diffusion coefficient  grows superlinearly for large values of $x$ while the drift coefficient  has a singularity at $x=0$.

The Heston model \cite{heston}, which is an asset price model with stochastic volatility, is another example for an SDE with non-Lipschitz coefficients. This SDE takes non-negative values  only and contains square root coefficients:
\begin{align*}
dS_t &= \mu
S_t \, dt + \sqrt{V_t} S_t  \, \big( \sqrt{1-\rho^2} \, d W^{(1)}_t + \rho\, dW^{(2)}_t\big),&  \quad S_0=s_0  > 0   
\\ 
dV_t &= \kappa( \lambda -  V_t) \, dt +  \theta \sqrt{V_t}  \, dW^{(2)}_t, & \quad V_0=v_0 > 0. 
\end{align*} The parameters satisfy $\mu \in \mathbb{R}$, $\kappa, \lambda, \theta > 0$ and  $\rho \in (-1,1)$.  The second component of this SDE is the Cox-Ingersoll-Ross process, which is also used as a short rate model  \cite{cox:1985}.

Finally, the use of the inverse of the CIR process as volatility process leads to the so-called $3/2$-model 
\begin{align*}
dS_t &= \mu
S_t \, dt + \sqrt{V_t} S_t   \, \big( \sqrt{1-\rho^2} \, d W^{(1)}_t + \rho\, dW^{(2)}_t\big),&  \quad S_0=s_0  > 0  
\\
dV_t &=  c_1 V_t( c_2 -  V_t) \, dt +  c_3 V_t^{3/2}  \, dW^{(2)}_t, & \quad V_0=v_0>  0
\end{align*} where $c_1,c_2,c_3 >0$,
see e.g. \cite{heston_2}.\\

Motivated by these and other examples, the investigation of  numerical methods for SDEs with non-Lipschitz coefficients has been an active field of research in recent years.  This  article, 
provides an overview of the new  developments using the above equations as illustrative examples and discussing, in particular, Euler-type schemes. 
For some of the above equations exact simulation methods exist, see e.g. \cite{bk,gm} and also \cite{roberts} for a  class
of one-dimensional equations,
which are superior for the simulation of the SDEs at a single or a few time points. 
However, if  a full sample
path of the SDE has to be simulated or if the  SDEs under consideration are
part of a larger SDE system, then discretization schemes are typically more efficient.

\bigskip
\bigskip

\section{Pathwise Convergence Rates of the Euler Scheme and  general It\^o-Taylor Methods}

The pathwise error criteria are very robust with respect to the global Lipschitz assumption.
One of the simplest approximation schemes for  equation (\ref{itosde}) is the Euler scheme
\begin{align*}
\overline{X}_{t_{k+1}} &= \overline{X}_{t_{k}}  + a(\overline{X}_{t_{k}}) \Delta + \sum_{j=1}^m b_j (\overline{X}_{t_{k}}) \Delta_k W^{(j)}, \qquad k=0,1, \ldots,    
\end{align*}
with $\overline{X}_{0}  =x_0$,  where  $\Delta =T/n$, $t_k=k\Delta$ and $\Delta_k W = W_{t_{k+1}}-W_{t_k}$.
The Euler scheme (and all other approximation methods that will be introduced below)
depend on the stepsize $\Delta>0$, hence  on $n \in \mathbb{N}$, but  this dependence will be omitted whenever it is clear from the context.

From the results of Gy\"ongy  \cite{gyoengy}  it follows that the Euler scheme has pathwise convergence order $1/2-\varepsilon$ also if the SDE coefficients are only locally Lipschitz continuous: 
for all $\varepsilon >0 $ 
$$  \sup_{k=0, \ldots, n}|X_{t_{k}} - \overline{X}_{t_{k}}| \leq \eta_{\eps}^{E}  \cdot n^{-1/2 + \varepsilon} $$ almost surely   
for  a finite and non-negative random variable $\eta_{\varepsilon}^E$
under the assumption that for all $N \in \mathbb{N}$ there exist constants $L_N>0$ such that
\begin{align*} |a(x)-a(y)| + \sum_{j=1}^m |b_{j}(x)-b_j(y)| \leq L_N \cdot |x-y|, \qquad |x|,|y| \leq N.             
\end{align*}
Thus, the pathwise convergence rate of the Euler scheme coincides up to an arbitrarily small $\varepsilon >0$ with its strong convergence rate $1/2$, but for the pathwise convergence rate
 no global Lipschitz assumption is required.

Jentzen, Kloeden \& Neuenkirch  \cite{JKN}  observed that this is not a specific feature of the Euler scheme but, in fact,  holds for general It\^o-Taylor schemes of order $\gamma = 
0.5, 1.0, 1.5, \ldots$. For the  definition of these 
schemes, see e.g. \cite{KP}. The Euler scheme corresponds to $\gamma=0.5$, while $\gamma=1.0$ yields the Milstein scheme
\begin{align*}
 \overline{X}_{t_{k+1}} &= \overline{X}_{t_{k}}  + a(\overline{X}_{t_{k}}) \Delta + \sum_{j=1}^m b_j (\overline{X}_{t_{k}}) \Delta_k W^{(j)}  + \sum_{j_1,j_2=1}^m L^{j_1}b_{j_2}(\overline{X}_{t_{k}}) I_{j_1,j_2}(t_k,t_{k+1})
\end{align*}
with the differential operators
$$ L^{j}  =\sum_{k=1}^{d} b_j^{(k)} \frac{\partial}{\partial x^{k}}, \qquad j = 1 , \ldots, m  $$
and the iterated It\^o-integrals
$$ I_{j_1,j_2}(s,t) = \int_{s}^{t} 
\int_{s}^{\tau_2} d W^{(j_{1})}_{\tau_{1}} \,  d W^{(j_2)}_{\tau_2}, \qquad j_1,j_2=1, \ldots, m.$$
The It\^o-Taylor scheme of order $1.5$ is usually  called the Wagner-Platen scheme.

\smallskip

\begin{theorem}\label{thm_pw_1}  Let  $\gamma$ $=$ $0.5$, $1.0$, $1.5$, $\ldots$. Assume that $a$, $b_1$, $\ldots$, ${b_m}$  $\in$ $C^{ 2 \gamma  +1}(\mathbb{R}^d ;\mathbb{R}^{d})$ and moreover let $\X^{\gamma ,n}$ be the It\^o-Taylor scheme of order $\gamma$ with stepsize $\Delta =T/n$.
Then  for every $\varepsilon >0$  there exists a  
non--negative random variable $\eta_{\varepsilon}^{\gamma} $  such that
$$ 
\sup_{k=0, \ldots, n}  \left |X_{t_k}(\omega ) -\X^{\gamma,n}_{t_k}(\omega)
\right| \leq \eta_{ \varepsilon}^{\gamma}(\omega)
\cdot n^{-\gamma + \varepsilon} 
$$ 
for almost all $\omega \in \Omega$.
\end{theorem}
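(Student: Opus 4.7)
The strategy is to reduce the statement to the classical globally-Lipschitz setting by localization, and then transfer an almost-sure rate via a Borel--Cantelli argument. First, for each $N \in \mathbb{N}$, I would introduce smooth cutoff coefficients $a^{(N)}, b_j^{(N)} \in C^{2\gamma+1}(\mathbb{R}^d;\mathbb{R}^d)$ that agree with $a, b_j$ on the open ball $\{|x|<N\}$ and have uniformly bounded derivatives up to order $2\gamma+1$. Let $X^{(N)}$ denote the solution of the SDE driven by these truncated coefficients and $\overline{X}^{(N),\gamma,n}$ the corresponding It\^o--Taylor scheme of order $\gamma$ with step $\Delta=T/n$.

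Since $a^{(N)}, b_j^{(N)}$ and all their derivatives up to order $2\gamma+1$ are bounded, the classical mean-square convergence theorem for It\^o--Taylor schemes of order $\gamma$ (see \cite{KP}) applies and yields, for every $p\geq 1$, a constant $C_{N,p}>0$ such that
\[
\mathbf{E}\sup_{k=0,\ldots,n} \bigl|X^{(N)}_{t_k} - \overline{X}^{(N),\gamma,n}_{t_k}\bigr|^p \leq C_{N,p}\, n^{-\gamma p}.
\]
Combined with Markov's inequality this gives, for fixed $\varepsilon>0$ and any $p$ with $p\varepsilon>1$,
\[
\mathbf{P}\!\left(\sup_{k=0,\ldots,n} \bigl|X^{(N)}_{t_k} - \overline{X}^{(N),\gamma,n}_{t_k}\bigr| > n^{-\gamma+\varepsilon}\right) \leq C_{N,p}\, n^{-p\varepsilon},
\]
which is summable in $n$. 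The Borel--Cantelli lemma then delivers, for each $N$, an almost surely finite non-negative random variable $\eta^{(N)}_\varepsilon$ with
\[
\sup_{k=0,\ldots,n} \bigl|X^{(N)}_{t_k} - \overline{X}^{(N),\gamma,n}_{t_k}\bigr| \leq \eta^{(N)}_\varepsilon\, n^{-\gamma+\varepsilon} \qquad \text{almost surely.}
\]

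The next step is to patch these localized bounds into a statement for the original SDE and scheme. Because the unique strong solution has a.s.\ continuous paths on the compact interval $[0,T]$, the events $\Omega_N := \{\sup_{t\in[0,T]}|X_t|<N\}$ satisfy $\mathbf{P}(\Omega_N)\to 1$ as $N\to\infty$, and on $\Omega_N$ one has $X^{(N)}_t(\omega)=X_t(\omega)$ for every $t\in[0,T]$. Moreover, thanks to the rate just established, on $\Omega_N$ and for $n\geq n_0(\omega)$ the discrete trajectory $\overline{X}^{(N),\gamma,n}_{t_k}$ stays within $\{|x|<N\}$ at every grid point, and so at those points the original scheme $\overline{X}^{\gamma,n}$ and the truncated scheme $\overline{X}^{(N),\gamma,n}$ coincide, because one It\^o--Taylor step depends only on the coefficients and their relevant derivatives evaluated at the current scheme point, where $a^{(N)}=a$ and $b_j^{(N)}=b_j$. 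Defining
\[
\eta^{\gamma}_\varepsilon(\omega) := \sup_{n\in\mathbb{N}} n^{\gamma-\varepsilon} \sup_{k=0,\ldots,n}\bigl|X_{t_k}(\omega) - \overline{X}^{\gamma,n}_{t_k}(\omega)\bigr|,
\]
the previous estimates imply that $\eta^\gamma_\varepsilon<\infty$ almost surely on $\Omega_N$ for every $N$, and since $\bigcup_N\Omega_N$ has full measure the asserted pathwise rate follows.

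The main obstacle is the matching step: one has to guarantee that the discrete scheme itself does not leave the localization ball before its continuous-time counterpart does, so that the identity $\overline{X}^{\gamma,n}_{t_k}=\overline{X}^{(N),\gamma,n}_{t_k}$ can actually be invoked pathwise on $\Omega_N$. This is resolved \emph{a posteriori} from the already-proved pathwise rate of $\overline{X}^{(N),\gamma,n}$ to $X^{(N)}=X$, which forces the scheme to track the bounded path of $X$ once $n$ is large enough; all other ingredients are standard, and the essential new content is the replacement of the global Lipschitz hypothesis used in the classical proof by the pairing of a.s.\ boundedness of the exact solution with local smoothness of the coefficients.
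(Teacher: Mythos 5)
Your proposal follows essentially the same route as the paper: localize to coefficients with bounded derivatives of the required order, obtain error moment bounds of arbitrarily high order for the It\^o--Taylor scheme, convert them into an almost-sure rate by a Markov/Borel--Cantelli argument (the paper packages this as the quoted lemma from \cite{lms}), and transfer the rate back to the original equation through the pathwise coincidence of solutions and schemes on the event that the exact path stays inside the localization ball. The only gloss is that the bound for every $p\geq 1$ is not literally the classical mean-square theorem of \cite{KP}; one has to rerun that analysis using the Burkholder--Davis--Gundy inequality to reach arbitrarily high moments, which is precisely the additional ingredient the paper highlights.
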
 

\smallskip

The main ingredients to obtain this result  are the Burkholder-Davis-Gundy inequality, which implies that 
all moments of an It\^o-integral are equivalent, the following  Borel-Cantelli-type Lemma, and a localization  procedure.

\smallskip

\begin{lemma}(see \cite{lms})
Let $\alpha >0$, $c_p \geq 0$ for $p \geq 1$ and let
$(Z_{n})_{n \in \mathbb{N}}$  be a sequence of  random variables with 

$$( {\bf E}
|Z_{n}|^{p})^{1/p} \leq c_p \cdot n^{-\alpha}
$$ 
for all $p \geq 1$ and $n \in \mathbb{N}$. Then for every
$\varepsilon > 0$ there  exists a  finite and non-negative random variable
  $\eta_{\eps}$ such that
$$ |Z_{n}| \leq \eta_{\varepsilon}\cdot n^{-\alpha + \varepsilon} $$
almost surely for all $n \in \mathbb{N}$.
 \end{lemma}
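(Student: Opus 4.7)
The plan is a one--shot application of Markov's inequality followed by the first Borel--Cantelli lemma, with $p$ chosen as a function of $\varepsilon$. Fix $\varepsilon>0$. The hypothesis supplies a bound on every moment, so I would estimate, for an integer $p\ge 1$ still to be chosen,
\begin{equation*}
\mathbf{P}\!\left(|Z_n|>n^{-\alpha+\varepsilon}\right)
\;\le\; \frac{\mathbf{E}|Z_n|^p}{n^{(-\alpha+\varepsilon)p}}
\;\le\; c_p^{\,p}\, n^{-\varepsilon p},
\end{equation*}
by Markov's inequality and the assumed moment bound $(\mathbf{E}|Z_n|^p)^{1/p}\le c_p\, n^{-\alpha}$.

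Next I would pick $p=p(\varepsilon)\in\mathbb{N}$ large enough that $\varepsilon p>1$ (for instance $p=\lceil 2/\varepsilon\rceil$). With this choice $\sum_{n\ge 1} n^{-\varepsilon p}<\infty$, hence
\begin{equation*}
\sum_{n=1}^\infty \mathbf{P}\!\left(|Z_n|>n^{-\alpha+\varepsilon}\right)<\infty.
\end{equation*}
The Borel--Cantelli lemma then yields
$\mathbf{P}\!\left(|Z_n|>n^{-\alpha+\varepsilon}\ \text{infinitely often}\right)=0$, so on a set of full measure there exists a (random) index $N(\omega)<\infty$ with $|Z_n(\omega)|\le n^{-\alpha+\varepsilon}$ for all $n\ge N(\omega)$.

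Finally I would package the conclusion by setting
\begin{equation*}
\eta_\varepsilon(\omega) \;:=\; \max\!\left\{\,1,\ \max_{1\le n<N(\omega)} n^{\alpha-\varepsilon}|Z_n(\omega)|\,\right\},
\end{equation*}
which is a.s.\ finite because only finitely many terms are involved; by construction $|Z_n(\omega)|\le \eta_\varepsilon(\omega)\,n^{-\alpha+\varepsilon}$ for every $n\in\mathbb{N}$, as required. There is no real obstacle here: the only delicate point is to remember that $p$ must be allowed to depend on $\varepsilon$ (to get summability), which is exactly what the hypothesis ``for all $p\ge 1$'' permits. The resulting $\eta_\varepsilon$ is not of any finite moment in general, but that is not claimed.
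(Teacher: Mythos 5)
Your argument is correct and is exactly the standard proof of this Borel--Cantelli-type lemma: Markov's inequality with a $p=p(\varepsilon)$ chosen so that $\varepsilon p>1$, summability, Borel--Cantelli, and then packaging the finitely many exceptional indices into the random constant $\eta_\varepsilon$. The paper itself gives no proof and simply cites \cite{lms}, where the argument is the same as yours, so there is nothing to add.
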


\smallskip

The Burkholder-Davis-Gundy inequality and the Borel-Cantelli-type Lemma allow  one  to show that
the It\^o-Taylor scheme of order $\gamma$ has pathwise convergence rate $\gamma- \varepsilon$ 
for smooth and bounded coefficients with bounded derivatives, thereby  extending the classical mean-square convergence analysis in \cite{KP}. Then a localization argument
is applied to avoid the boundedness assumptions. Roughly speaking, this localization argument works as follows: A fixed sample path $X_t(\omega), \, t \in [0,T],$ of the SDE solution is bounded, 
i.e. stays in some open set $B(\omega)$. However for the SDE 
$$ dY_t =\widetilde{a}(Y_t)\,dt + \sum_{j=1}^m \widetilde{b}_j(Y_t)\, d W^{(j)}_t, \qquad Y_0=x_0$$
with smooth and bounded coefficients $\widetilde{a}$, $\widetilde{b}_j$ with bounded derivatives, which coincide with the ones of the original SDE on $B(\omega)$, the solution sample path 
$Y_t(\omega), t \in [0,T]$, coincides with $X_t(\omega), t \in [0,T]$. Asymptotically this also holds
for the corresponding sample paths of the $\gamma$-It\^o-Taylor schemes, so the pathwise convergence rates carry over.

\bigskip

Note that all the  examples of SDEs given in the introduction take non-negative values only, so good approximation schemes should preserve  this structural property. The (explicit) Euler scheme is, in 
general, not such a scheme, since its increments are conditionally Gaussian.
For example, in case of the CIR process
$$dX_t= \kappa(\lambda- X_t) \,d t +  \theta \sqrt{X_t} \, dW_t, \qquad X_0=x_0 >0$$ the transition density of the Euler scheme reads as
$$ p(y;x)= \frac{1}{  \sqrt{2 \pi \theta^2 x \Delta}} \exp \left( -\frac{\big(y -( x +\kappa(\lambda -x) \Delta \big)^2}{2 \theta^2 x \Delta}\right), \qquad y \in \mathbb{R}, \, x >0,$$ 
so negative values can be obtained with positive probability even in the first step. This  has lead to many ad-hoc corrections to prevent termination of the Euler scheme. The   truncated 
Euler scheme 
\begin{align} \label{euler_2}
\X_{t_{k+1}} &= \X_{t_k} +  \kappa( \lambda - \X_{t_k} ) \, \Delta + \theta  \sqrt{ \X_{t_k}^+ } \, \Delta_k W  , \qquad k=0, 1, \ldots
\end{align}   was proposed in \cite{delbaen}, while  the  scheme 
\begin{align} \label{euler_3}
\X_{t_{k+1}} &= \X_{t_k} +  \kappa( \lambda- \X_{t_k} )  \Delta + \theta  \sqrt{ |\X_{t_k}| } \, \Delta_k W  , \qquad k=0, 1, \ldots
\end{align}
was  studied in \cite{hm}. Both approaches extend the mapping $[0, \infty) \ni x \mapsto \sqrt{x} \in [0, \infty)$  suitably to negative values of $x$. For  the CIR process
 this idea was taken further by Lord, Koekkoek \& 
van Dijk \cite{lkd}, who also proposed modifications of the drift coefficient for negative values of the state space.\\

\begin{example}\label{cir_scen_def} {\rm 
The following  table shows the average number of negative steps  per path  for the above Euler approximations of the CIR process.  Scenario I (taken from \cite{AS_K}), corresponds to the parameters
$$ x_0 = 0.05 , \qquad \kappa =5.07 , \qquad \lambda = 0.0457, \qquad \theta = 0.48, \qquad T=5$$
while  Scenario II (taken from \cite{bk}) uses $$ x_0=0.09, \qquad \kappa = 2, \qquad \lambda =0.09, \qquad \theta =1, \qquad T=5.$$ The stepsize for the Euler schemes is given by $\Delta =T/n$ with $n=512$.
 \medskip
 \begin{center}
\begin{tabular}{c||c|c} 
average negative steps of / for & Scenario I & Scenario II \\ \hline \hline
Euler scheme  \eqref{euler_2} & 0.9141 &  64.8611 \\ \hline
Euler scheme \eqref{euler_3}  &  1.0590 & 74.5017
\end{tabular}
\end{center}
\medskip
The empirical frequency of negative paths is  0.4913 in Scenario I and 0.9990 in Scenario II.  These results were obtained by a Monte Carlo simulation with $N=10^6$ repetition. They clearly  indicate that the Euler scheme \eqref{euler_3} has a tendency for negative ``excursions". This can also  be seen  in Figure \ref{figure_euler_exc}, which shows a sample path  of the (linearly interpolated) Euler  schemes \eqref{euler_2} and \eqref{euler_3} using the same path of the driving Brownian motion. The parameters used in this figure correspond to Scenario II.  \begin{flushright} $\diamond$ \end{flushright}
\begin{center}
\begin{figure}[h]
\ \leavevmode \epsfig{figure=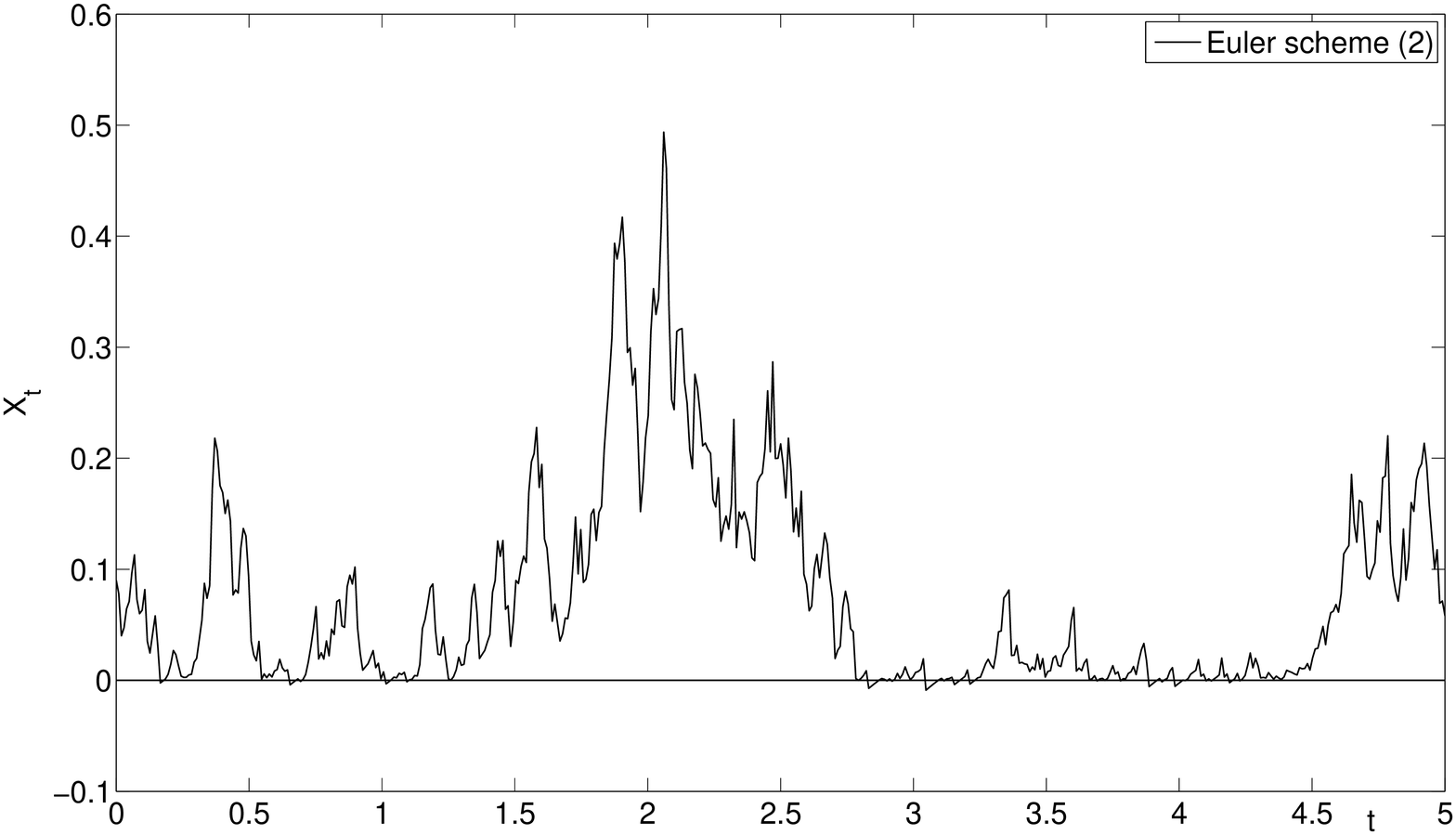, width=13cm,
height=8cm} \hspace{0.1cm}
\ \leavevmode \epsfig{figure=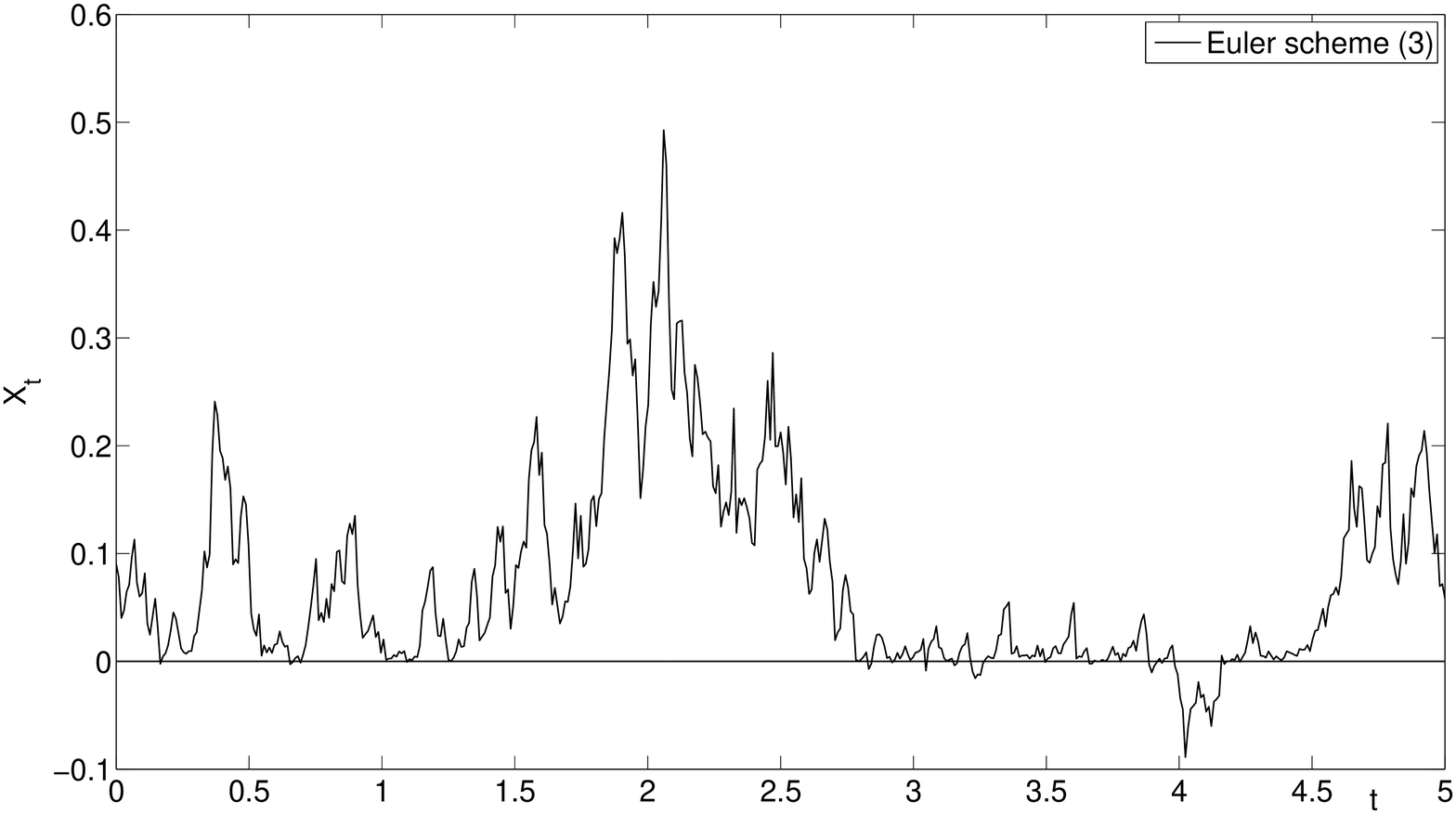, width=13cm,
height=8cm}  \caption{\label{figure_euler_exc}A path of Euler scheme \eqref{euler_2} vs.  Euler scheme \eqref{euler_3} for the CIR process and Scenario II}
\end{figure}  
\end{center}
}
\end{example}  
  
\bigskip
\smallskip

For general SDEs the procedure of modifying the coefficients outside the support of the solution has been introduced systematically in \cite{JKN}. For an SDE
\begin{equation}  \label{snew.sde2}
dX_t = a(X_t)\, dt + \sum_{j=1}^{m} b_{j}(X_t)\,   dW^{(j)}_t,   \qquad X_0=x_0
\end{equation}
which takes values in a  domain $D$ $\subset$ $\mathbb{R}^{d}$, i.e.
\begin{align} \mathbf{P}(X_t \in D, \,\, t \geq 0)=1,  \label{support_sde} \end{align}
the auxiliary coefficients 
\begin{align*}
\widetilde{a}(x) &=  a(x) \cdot  \mathbf{1}_{D}(x) + f(x) \cdot\mathbf{1}_{E}(x), & \qquad x \in \mathbb{R}^{d}  &
\\[2ex]
\widetilde{b}_j(x)&=  b_j(x) \cdot  \mathbf{1}_{D}(x) + g_j(x) \cdot \mathbf{1}_{E}(x), &\qquad x \in \mathbb{R}^{d}, &\quad j=1, \ldots, m 
\end{align*}
with $E= \mathbb{R}^d \setminus D$ are introduced there.
A modified It\^{o}-Taylor scheme of order $\gamma$ based on the auxiliary functions $f$ and $g$ is then the corresponding standard It\^{o}-Taylor scheme
for the SDE
\begin{equation*} 
dX_t = \widetilde{a}(X_t)\, dt + \sum_{j=1}^{m} \widetilde{b}_{j}(X_t)\,  dW^{(j)}_t,  
\end{equation*}
with  a suitable definition of the  derivatives of the coefficients on $\partial D$, see  \cite{JKN} for details. This method is well-defined as long as the coefficients of the
 equation  are  $(2\gamma+1)$-times differentiable on $D$ and the auxiliary functions are $(2\gamma-1)$-times differentiable on $E$.
The purpose of the auxiliary functions is twofold: to obtain a well-defined approximation scheme
and to bring the numerical scheme back to $D$ if it leaves  $D$. In particular, the auxiliary functions can always be chosen to be affine or even constant. It was shown by Jentzen,
 Kloeden \& Neuenkirch \cite{JKN}  that  Theorem \ref{thm_pw_1} adapts to  modified It\^{o}-Taylor schemes for SDEs on  domains    $ D \subset \mathbb{R}^{d}$.

\smallskip

\begin{theorem} \label{taylor_mod} 
Let $X$  be the solution of SDE \eqref{snew.sde2} satisfying condition \eqref{support_sde}.
Moreover let $\gamma = 0.5, 1.0, 1.5, \ldots$ and  assume that
 $$a\in C^{ 2   \gamma  +1 }( D ;
  \R^{d}), \qquad b \in C^{ 2   \gamma +1 }( D ;
  \R^{d,m}) $$
and $$  f \in    C^{2
  \gamma  -1 }( E ;
  \R^{d} ), \qquad g \in   C^{       2
 \gamma  -1}( E ;
  \R^{d,m}) .
     $$
Finally let  $ \widetilde{X}^{\gamma,n}$ be  the modified
  It\^{o}-Taylor method for $X$  based on the auxiliary functions $f$ and $g$ with stepsize $\Delta=T/n$. 
Then for every $\varepsilon >0$ there exists a finite and non-negative
  random variable $\eta_{\gamma, \varepsilon}^{f,g} $ such that
$$ 
\sup_{k=0, \ldots, n}  \big |X_{t_k}(\omega ) -\widetilde{X}^{\gamma,n}_{t_k}(\omega)
\big | \leq \eta_{\gamma, \varepsilon}^{f,g}(\omega)
\cdot n^{-\gamma + \varepsilon}  $$
for almost all $\omega \in \Omega$ and all $n \in \N$.
\end{theorem}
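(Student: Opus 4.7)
The plan is to reduce Theorem \ref{taylor_mod} to Theorem \ref{thm_pw_1} via a pathwise localization inside the domain $D$. For almost every $\omega$ the path $t \mapsto X_t(\omega)$ is continuous and by \eqref{support_sde} takes values in $D$, so its image $K(\omega)$ is a compact subset of $D$. Fix once and for all an exhaustion $D_1 \subset D_2 \subset \cdots$ of $D$ by bounded open sets with $\overline{D_N} \subset D$ and $\bigcup_N D_N = D$; then a.s.\ there is a minimal $N(\omega) \in \mathbb{N}$ with $K(\omega) \subset D_{N(\omega)}$, so that the path stays uniformly away from $\partial D$.

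For each $N$ choose a cutoff $\chi_N \in C_c^\infty(D;[0,1])$ with $\chi_N \equiv 1$ on $D_N$ and define globally smooth coefficients with bounded derivatives of all needed orders
\[
\widehat{a}_N(x) := \chi_N(x)\, a(x), \qquad \widehat{b}_{j,N}(x) := \chi_N(x)\, b_j(x), \qquad x \in \mathbb{R}^d,
\]
extended by $0$ outside the support of $\chi_N$. Then $\widehat{a}_N, \widehat{b}_{j,N} \in C^{2\gamma+1}(\mathbb{R}^d;\mathbb{R}^d)$ have bounded derivatives, so Theorem \ref{thm_pw_1} applies to the SDE with these coefficients. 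Letting $Y^N$ denote its strong solution and $\widehat{X}^{\gamma,n,N}$ the corresponding standard Itô-Taylor scheme, we get, for every $\varepsilon > 0$, a non-negative random variable $\eta^{N,\varepsilon}$ with
\[
\sup_{0 \leq k \leq n} \big|Y^N_{t_k}(\omega) - \widehat{X}^{\gamma,n,N}_{t_k}(\omega)\big| \leq \eta^{N,\varepsilon}(\omega) \cdot n^{-\gamma+\varepsilon} \qquad \text{a.s., for all } n \in \mathbb{N}.
\]
Since $\widehat{a}_N = a$ and $\widehat{b}_{j,N} = b_j$ together with all derivatives of order $\leq 2\gamma+1$ on $D_N$, on the event $\{N(\omega) = N\}$ the original solution $X(\omega)$ also solves the SDE driven by $(\widehat{a}_N, \widehat{b}_N)$, and pathwise uniqueness for the latter yields $Y^N(\omega) = X(\omega)$ on $[0,T]$.

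The remaining step is to identify $\widetilde{X}^{\gamma,n}(\omega)$ with $\widehat{X}^{\gamma,n,N(\omega)}(\omega)$ asymptotically. Since the Itô-Taylor increment at step $k$ is a polynomial in the values of the coefficients and of their derivatives of order $\leq 2\gamma - 1$ at the current scheme point against iterated Wiener integrals, and since $\widetilde{a}, \widetilde{b}_j$ agree with $\widehat{a}_{N(\omega)}, \widehat{b}_{j,N(\omega)}$ (together with all such derivatives) on $D_{N(\omega)}$, an induction on $k$ shows that as long as the scheme point stays in $D_{N(\omega)}$ the two schemes evolve identically. Using the displayed bound together with the positive distance between $K(\omega)$ and $\partial D_{N(\omega)}$, one extracts an integer $n_0(\omega)$ such that for $n \geq n_0(\omega)$ the scheme $\widehat{X}^{\gamma,n,N(\omega)}(\omega)$ remains in $D_{N(\omega)}$; hence both schemes coincide on the whole grid and the pathwise rate transfers.

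The main obstacle is the intertwined $\omega$-dependence: the random variable $\eta^{N,\varepsilon}$ given by Theorem \ref{thm_pw_1} depends on $N$, the scale $N(\omega)$ is itself random, and the threshold $n_0(\omega)$ depends both on $\eta^{N(\omega),\varepsilon}(\omega)$ and on $\mathrm{dist}(K(\omega), \partial D_{N(\omega)})$. A measurable gluing on the countable partition $\{N(\omega) = N\}$, combined with absorbing the finitely many small-$n$ discrepancies into the multiplicative constant on each event, produces the desired finite random variable $\eta^{f,g}_{\gamma,\varepsilon}$ that is valid uniformly in $n \in \mathbb{N}$. A second subtlety, the otherwise delicate definition of the derivatives of $\widetilde{a}, \widetilde{b}_j$ on $\partial D$, does not enter the analysis because on the event of asymptotic coincidence the relevant evaluations all take place in the interior $D_{N(\omega)}$, where $\widetilde{a} = a$ and $\widetilde{b}_j = b_j$ are genuinely $C^{2\gamma+1}$.
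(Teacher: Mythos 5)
Your argument is correct and follows essentially the paper's own route: Theorem \ref{taylor_mod} is obtained (following \cite{JKN}) by exactly this localization, in which a fixed sample path stays in a set compactly contained in $D$, the solution coincides with that of an SDE whose smooth coefficients with bounded derivatives agree with $a,b$ there, and asymptotically the corresponding It\^o-Taylor iterates coincide as well, so the rate of Theorem \ref{thm_pw_1} carries over. Your bookkeeping with the random index $N(\omega)$, the threshold $n_0(\omega)$, and the absorption of finitely many small $n$ into the random constant is the same device used in that argument, so there is nothing essentially different to flag.
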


\smallskip

In the case of the Euler scheme, i.e. $\gamma =0.5$, the assumptions on $a$ and $b$ can be weakened to the assumption that $a$ and $b$ are locally Lipschitz continuous on $D$.
 For SDEs on domains in mathematical finance this condition is typically satisfied. In fact, in most cases the coefficients are infinitely differentiable.

\smallskip

The CIR process satisfies
$$ \mathbf{P}(X_t > 0 \,\,\, {\textrm{for all}} \, \,\, t \geq 0)=1$$ if and only if $2 \kappa \lambda \geq \theta^2$.
The latter  assumption is typically satisfied in interest rate applications of the CIR process.
Hence,  modified Taylor schemes can be used here with $D=(0, \infty)$.  
The truncated Euler scheme (\ref{euler_2}) corresponds to the auxiliary functions $f(x)=a(x)$, $g(x)=0$, $x \leq 0$, while the scheme (\ref{euler_3}) uses  the auxiliary 
functions $f(x)=a(x)$, $g(x)=\sqrt{-x}$, $x \leq 0$.  Note that $2\kappa \lambda \geq \theta^2$ is satisfied in Scenario I, but not in Scenario II.

\bigskip

For structure preserving integration of the CIR process  also the symmetrized Euler method 
\begin{align}\label{euler_1}
\widetilde{X}_{t_{k+1}} &= \Big | \widetilde{X}_{t_k} +  \kappa( \lambda
  - \widetilde{X}{t_k} )  \Delta  + \theta  \sqrt{
     \widetilde{X}_{t_k}} \, \Delta_k W \Big|, \qquad k=0,1, \ldots  
\end{align} 
was   proposed in \cite{diop,diop_2}. While the modified Euler schemes \eqref{euler_2} and \eqref{euler_3} may leave $(0, \infty) $ and are then forced back in the next 
steps, this scheme is always non-negative. Adapting this to general SDEs, which take values in a domain $D$, leads to 
the  reflected Euler schemes, see e.g. \cite{NZ}, which are given by 
\begin{align*}
 \widetilde{X}^{\psi}_{t_{k+1}}  =   H^{\psi}_{t_{k+1}}  \cdot \mathbf{1}_{D}( H^{\psi}_{t_{k+1}}) + \psi(H^{\psi}_{t_{k+1}})  \cdot \mathbf{1}_{\mathbb{R}^d \setminus D}( H^{\psi}_{t_{k+1}}) 
\end{align*}
with $ \widetilde{X}^{\psi }_{0} = x_{0},$  where
\begin{align*} 
H^{\psi}_{t_{k+1}} = \widetilde{X}^{\psi}_{t_{k}} +  a(\widetilde{X}^{\psi}_{t_{k}}) \Delta + \sum_{j=1}^m b_j (\widetilde{X}^{\psi}_{t_{k}}) \Delta_k W^{(j)}
\end{align*}
and a  measurable projection function $\psi: \mathbb{R}^d \setminus D \rightarrow D \cup \partial D$.
A straightforward modification of the above theorem yields a pathwise convergence order $1/2-\varepsilon$  for these reflected Euler schemes if the SDE coefficients are twice 
continuously differentiable on $D$.
In the same way reflected It\^o-Taylor schemes of arbitrary order can be constructed and analyzed.

\smallskip

The symmetrized Euler scheme \eqref{euler_1} corresponds to the reflection function $\psi(x)=|x|$.  The results on modified It\^o-Taylor schemes and reflected 
Euler methods apply also to the generalized Ait-Sahalia model with $D=(0,\infty)$ if $r>1, \rho < (1+r)/2$, to the Heston model with $D=(0,\infty)^2$ if $2 \kappa \lambda \geq \theta^2$ and to the 3/2-model with 
$D=(0,\infty)^2$
 and no further 
restrictions on the parameter.

\smallskip

\begin{example}
{\rm   To illustrate the above results  consider Scenario I for the  Cox-Ingersoll-Ross process. 
Figure \ref{figure_cir_path} shows  for two  different sample paths
$\omega  \in \Omega$  the maximum error in the discretization points,  
i.e.  $$ \sup_{k=0, \ldots, n} |X_{t_k}(\omega)-\overline{X}_{t_{k}}
(\omega)|,$$  of 
\begin{itemize}
\item[(i)] the truncated Euler scheme \eqref{euler_2}
\item[(ii)] the symmetrized Euler scheme \eqref{euler_1}
\item[(iii)] the modified Milstein scheme with auxiliary functions
$f(x)=\kappa(\lambda- x)$, $g(x)=0$, i.e. a truncated Milstein scheme.
\end{itemize}
To estimate  the pathwise 
maximum error  for the above
 approximation schemes  the Cox-Ingersoll-Ross process  have been discretized with a very small step size using scheme \eqref{euler_2}.
\begin{figure}[htp]
 \leavevmode  \epsfig{figure=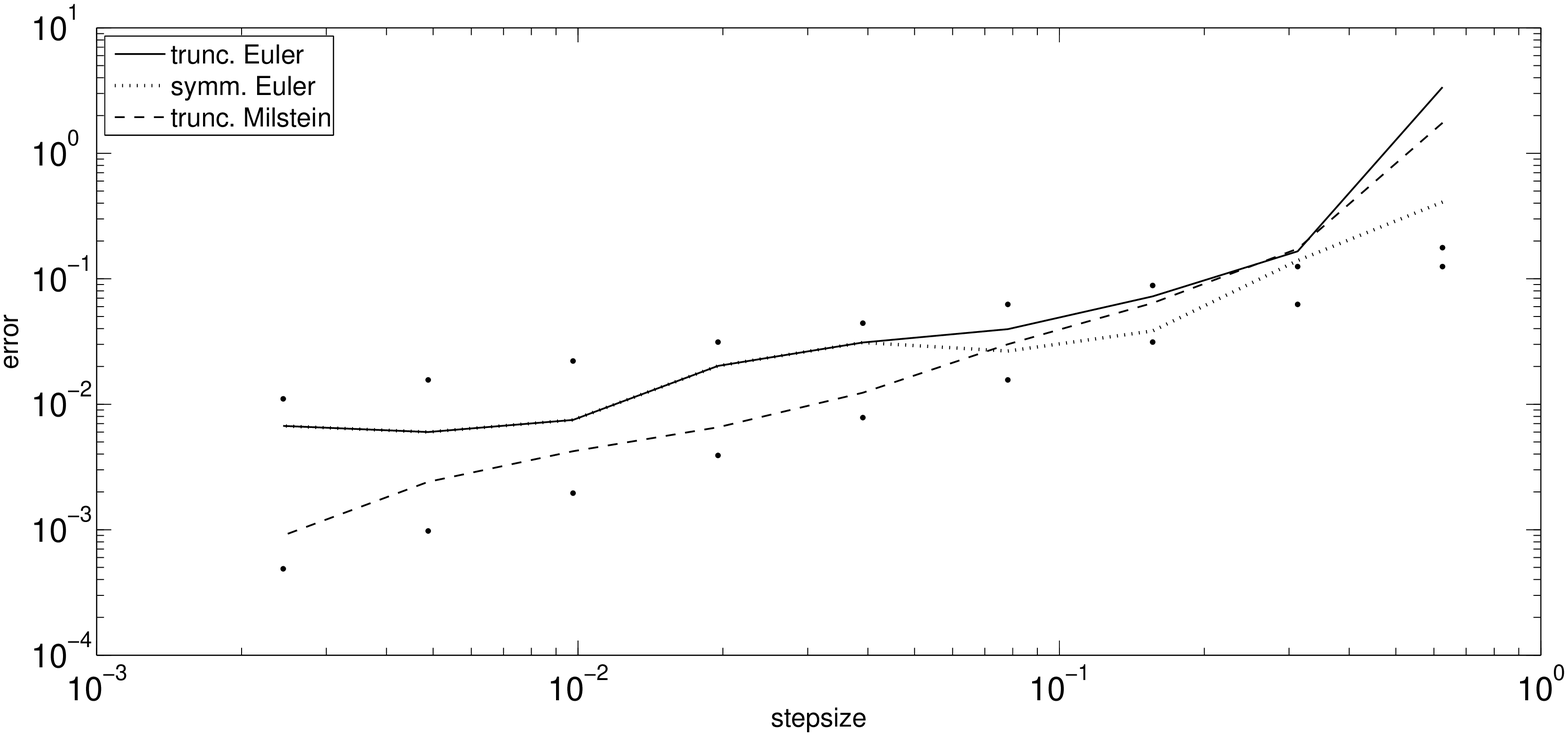, width=13cm,
height=8cm}  
\ \leavevmode   \epsfig{figure=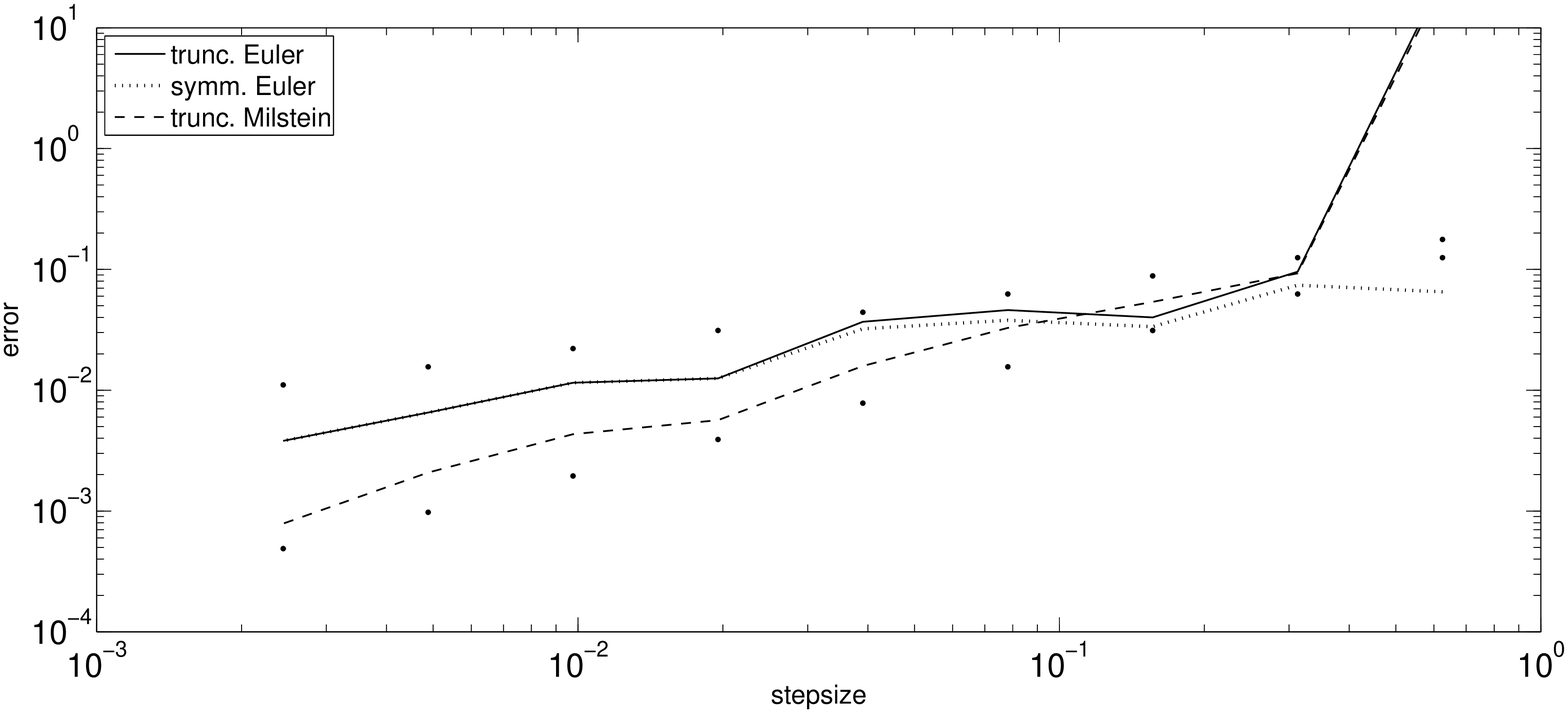, width=13cm,
height=8cm} 
\caption{\label{figure_cir_path}Pathwise maximum error  vs.~step size for two
   sample paths for the Cox-Ingersoll-Ross model for Scenario I}
\end{figure}
In Figure \ref{figure_cir_path}  
log-log-coordinates are used, so  the dots indicate the
convergence orders  $0.5$ and $1$. 
The pathwise  convergence rates of all three
approximation schemes are  in  good accordance
with the theoretically predicted rates for moderate and small step sizes. For small step sizes  both Euler schemes do not take negative values
and hence coincide. Moreover, for small step sizes
the Milstein scheme is superior due to its first order convergence.
 \begin{flushright} $\diamond$ \end{flushright}}
\end{example} 
\smallskip

Numerical methods with pathwise convergence rates of high  order are thus available also for SDEs with non-globally Lipschitz coefficients. However, while pathwise convergence rates are very important
 for the analysis of random dynamical systems \cite{arnold,GAKN}, one of the main objectives in mathematical finance  is the pricing of
(path-dependent) European-type derivatives, which means to compute  real numbers
$ \EX \Phi(X)$
where $\Phi : C([0,T]; \RR^d) \rightarrow \RR$ is the  discounted payoff of the derivative.   Since the  integrability of the random constants in the error bounds is an open problem, the above
 pathwise convergence rates  do not imply weak or strong convergence rates. Nevertheless,  if $\Phi$ is bounded and continuous and if $\X^{\gamma}=(\X^{\gamma}_t)_{ t \in [0,T]}$ is the piecewise linear  interpolation of the $\gamma$-It\^o-Taylor scheme
(standard, modified or reflected) then 
$$ \mathbf{E} \Phi(\X^{\gamma}) \longrightarrow  \mathbf{E} \Phi(X) $$
for $n \rightarrow \infty$, so for bounded and continuous pay-offs (e.g. put options) one obtains at least the convergence of the corresponding standard Monte Carlo estimators  for the option price. The same is true for  barrier options with payoff 
of the form
$$ \Phi(X)= \phi(X_T) \mathbf{1}_{ \{  K_1 \leq |X_t| \leq K_2, \,\,  t \in [0,T] \} } $$
with $0 \leq K_1 \leq K_2 < \infty$,
if $\phi$ is bounded and continuous  and the law of $\sup_{t \in [0,T]} |X_t|$ and  $\inf_{t \in [0,T]} |X_t|$ has a density with respect to the Lebesgue measure.

 \bigskip
 \bigskip

\section{The Explicit Euler Scheme: Criteria for Weak and Strong Convergence}
It was shown by Higham, Mao \& Stuart in \cite{HMS} that the explicit Euler scheme 
\begin{align*} \overline{X}_{t_{k+1}} &= \overline{X}_{t_{k}}  + a(\overline{X}_{t_{k}}) \Delta + \sum_{j=1}^m b_j (\overline{X}_{t_{k}}) \Delta_k W^{(j)}, \qquad k=0,1, \ldots,   
\\ \overline{X}_{0} &=x_0  \nonumber 
\end{align*}
is strongly convergent if the coefficients are locally Lipschitz continuous on $\RR^d$ and  a moment condition for the SDE and its  Euler approximation is satisfied. This result can be extended to SDE on domains and the modified or reflected Euler scheme.

\smallskip

\begin{theorem}\label{euler_hms_thm} 
Let $X$  be the solution of SDE \eqref{snew.sde2} satisfying condition \eqref{support_sde}.
Moreover, let  $ \widetilde{X}^{n}$ be  the modified Euler scheme based on the auxiliary functions $f \in C(E; \RR^d)$, $g \in C(E; \RR^{d,m}) $ with stepsize $\Delta=T/n$ or let  $ \widetilde{X}^{n}$ be the reflected Euler scheme based on the projection function $\psi : E \rightarrow D \cup \partial D$  with stepsize $\Delta=T/n$. 
Assume that
 $$a\in C^{ 2 }( D ;
  \R^{d}), \qquad b \in C^{ 2 }( D ;
  \R^{d,m}) $$
and
furthermore, assume that for some $p >2$  
\begin{align} \sup_{n \in \mathbb{N}} \, \mathbf{E} \max_{k=0, \ldots n} |\widetilde{X}^n_{t_k} |^p +  \mathbf{E}   \sup_{t \in [0,T]} |X_t|^p < \infty. \label{euler_moment}\end{align} 
Then  
$$ \lim_{n \rightarrow \infty}  \mathbf{E}  \max_{k=0, \ldots, n} |X_{t_k}- \widetilde{X}^n_{t_k} |^2=0. $$
\end{theorem}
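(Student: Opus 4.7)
My plan is to deduce the strong $L^{2}$ convergence from the almost-sure pathwise convergence of Theorem \ref{taylor_mod}, combined with a uniform-integrability upgrade powered by the hypothesis \eqref{euler_moment} with $p > 2$. The first step is to apply Theorem \ref{taylor_mod} in the case $\gamma = 1/2$ (together with its announced analog for the reflected Euler scheme): the regularity hypotheses of Theorem \ref{euler_hms_thm}, namely $a,b\in C^{2}(D)$ together with either $f,g\in C(E)$ or a measurable projection $\psi$, match exactly what that result requires. Writing $Y_{n} := \max_{0\le k\le n} |X_{t_{k}} - \widetilde{X}^{n}_{t_{k}}|^{2}$, one obtains $Y_{n} \to 0$ almost surely as $n \to \infty$.

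Second, I would upgrade this almost-sure convergence to uniform integrability of $(Y_{n})_{n}$. From $(a+b)^{2}\le 2a^{2}+2b^{2}$ and the triangle inequality,
$$
Y_{n} \;\le\; 2 \sup_{t\in[0,T]} |X_{t}|^{2} \;+\; 2 \max_{0\le k\le n} |\widetilde{X}^{n}_{t_{k}}|^{2},
$$
so raising to the power $p/2 > 1$, taking expectation and invoking \eqref{euler_moment} yields $\sup_{n} \mathbf{E}\, Y_{n}^{p/2} < \infty$. A sequence bounded in $L^{p/2}(\Omega)$ for some $p/2 > 1$ is uniformly integrable.

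Third, Vitali's convergence theorem --- almost-sure convergence of a uniformly integrable family implies $L^{1}$ convergence --- gives $\mathbf{E}\, Y_{n} \to 0$, which is exactly the statement of Theorem \ref{euler_hms_thm}.

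The main obstacle is not within these three steps but at the input of the first: Theorem \ref{taylor_mod} is formulated only for modified It\^o--Taylor schemes, and the corresponding pathwise bound for the reflected Euler scheme is merely asserted in the text. Supplying that bound would follow the same localization-to-a-smooth-and-bounded surrogate SDE strategy that underlies Theorem \ref{taylor_mod}, together with the observation that on each $\omega$ the path $X_{\cdot}(\omega)$ stays in a compact subset of $D$, so that the projection $\psi$ becomes inactive for the surrogate scheme once $n$ is large enough. Apart from that technical input, the reduction above is short and relies only on the interplay of almost-sure convergence with $L^{p/2}$-boundedness for $p/2 > 1$.
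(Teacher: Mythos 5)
Your argument is correct and is essentially the paper's own proof: the paper likewise combines the almost-sure pathwise convergence of the modified/reflected Euler scheme (Theorem \ref{taylor_mod} with $\gamma=0.5$ and its reflected analog) with the observation that the moment condition \eqref{euler_moment} with $p>2$ gives $L^{p/2}$-boundedness, hence uniform integrability, of the squared maximum error, so that one may pass to the limit under the expectation. Your explicit spelling-out of the UI bound and the caveat about the reflected case are just more detailed renderings of the same steps the paper states in one line.
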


\smallskip
\begin{proof}
From the results of the previous section  
$$ \lim_{n \rightarrow \infty}  \max_{k=0, \ldots, n} |X_{t_k}- \widetilde{X}^n_{t_k} |= 0 $$ 
hold, almost surely.  However,  
assumption (\ref{euler_moment}) implies the uniform integrability of  $$\max_{k=0, \ldots n} |X_{t_k}- \widetilde{X}_{t_k}^{n} |^2, \,\,\,\, n \in \mathbb{N}. $$ The  assertion now follows, since uniform
 integrability allows integration to the limit.
\end{proof}

\smallskip

Note that  assumption \eqref{euler_moment} is easily verified if the SDE coefficients have linear growth on $D$, i.e.
$$  |a(x)| + \sum_{j=1}^m |b_{j}(x)| \leq C \cdot (1+ |x|) , \qquad x \in D,   $$
for some $C>0$.
Turning back to the Cox-Ingersoll-Ross process  this gives us strong convergence of the Euler schemes \eqref{euler_2}, \eqref{euler_3} and \eqref{euler_1} under the assumption
$ 2 \kappa \lambda \geq \theta^2.$
Note that for the Euler schemes \eqref{euler_2} and \eqref{euler_3}
strong convergence without a restriction on the parameter has been shown in \cite{delbaen} and \cite{hm} using a Yamada function technique.   This technique has also been applied by 
Gy\"ongy \& R\'asonyi in \cite{Gyoengy_Rasonyi} to obtain the following result: 

\smallskip

\begin{theorem}\label{euler_hoelder} Let $a_1,a_2,b: \mathbb{R} \rightarrow \mathbb{R}$. Consider the one-dimensional SDE
$$dX_t = (a_1(X_t) +a_2(X_t))\,dt + b(X_t)\, dW_t, \quad t \in [0,T], \qquad X_0=x_0 \in \mathbb{R}$$  and let $\X^n$ be the corresponding Euler scheme with stepsize $\Delta=T/n$. 
Moreover, let $a_2$ be monotonically decreasing and assume that there exists constants $\alpha \in [0,1/2]$, $\beta \in (0,1]$ and $C>0$ such that
\begin{align*} |a_1(x)-a_1(y)| \leq C \cdot |x-y|, \qquad  |a_2(x)-a_2(y)| \leq C \cdot |x-y|^{\beta},
\end{align*} \begin{align*} |b(x)-b(y)| \leq C \cdot |x-y|^{\frac{1}{2} + \alpha}
\end{align*}
for all $x,y \in \mathbb{R}$. Then, for all $p \in \mathbb{N}$, there exist constants  $K_p^{\alpha,\beta} >0$  such that
$$ \mathbf{E}  \max_{k=0, \ldots, n} |X_{t_k}- \X_{t_k}^n |^p  \leq  \left \{ \begin{array}{lcl}K_p^{0,\beta} \cdot \frac{1}{\log(n)} & \textrm{for}&  \alpha=0  \\ K_p^{\alpha,\beta} \cdot \left( \frac{1}{n^{\alpha}} + \frac{1}{n^{\beta/2}} \right) & \textrm{for}&  \alpha \in (0,1/2)\\
K_p^{\alpha,\beta} \cdot \left( \frac{1}{n^{p/2}} + \frac{1}{n^{\beta p/2}} \right) & \textrm{for}&  \alpha =1/2\\
 \end{array} \right.$$
\end{theorem}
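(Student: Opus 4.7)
The plan is to implement the Yamada--Watanabe smoothing technique for the difference $X_t - \bar X_t$, where $\bar X_t$ denotes the time-continuous Euler interpolation
$$ \bar X_t = \bar X_{t_k} + \bigl(a_1(\bar X_{t_k}) + a_2(\bar X_{t_k})\bigr)(t - t_k) + b(\bar X_{t_k})(W_t - W_{t_k}), \quad t \in [t_k, t_{k+1}], $$
with $\kappa(t) = t_k$ on $[t_k, t_{k+1})$. The H\"older/Lipschitz hypotheses force linear growth of the coefficients, so standard arguments deliver uniform moment bounds $\mathbf{E}\sup_{t \le T}|X_t|^p + \mathbf{E}\sup_{t \le T}|\bar X_t|^p \le C_p$ together with the one-step estimate $\mathbf{E}|\bar X_t - \bar X_{\kappa(t)}|^q \le C_q \Delta^{q/2}$ for every $q \ge 1$.

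Next I would introduce the Yamada--Watanabe family: choose $a_m \downarrow 0$ by $\int_{a_m}^{a_{m-1}} z^{-1}\,dz = m$, a smooth $\rho_m \ge 0$ supported in $(a_m, a_{m-1})$ with $\int \rho_m = 1$ and $\rho_m(z) \le 2/(mz)$, and set $\psi_m(x) = \int_0^{|x|} \int_0^y \rho_m(z)\,dz\,dy$. Then $\psi_m(x) \ge |x| - a_{m-1}$, $|\psi_m'| \le 1$, and the key cancellation $\psi_m''(x)\,|x|^{1+2\alpha} \le \tfrac{2}{m}|x|^{2\alpha}$ holds for every $x \neq 0$.

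The heart of the proof is an It\^o expansion of $\psi_m(X_t - \bar X_t)$. Taking expectation (with a routine localisation of the martingale part) gives four contributions to bound. The $a_1$-term produces the Gronwall feedback $L\int_0^t \mathbf{E}|X_s - \bar X_s|\,ds$ plus a discretisation residue $\le C\Delta^{1/2}$. The $a_2$-term splits into the non-positive monotone piece $\psi_m'(X_s - \bar X_s)\bigl(a_2(X_s) - a_2(\bar X_s)\bigr) \le 0$ (since $\psi_m'$ carries the sign of its argument and $a_2$ is decreasing) plus a H\"older residue $\le C\Delta^{\beta/2}$. The It\^o correction from $b$ is split using $(b(X_s)-b(\bar X_{\kappa(s)}))^2 \le 2(b(X_s)-b(\bar X_s))^2 + 2(b(\bar X_s)-b(\bar X_{\kappa(s)}))^2$: the first part enjoys the Yamada cancellation and contributes $O(m^{-1}a_{m-1}^{2\alpha})$, while the second is handled by conditioning on $\mathcal F_{\kappa(s)}$ and using the one-step bound $\Delta^{(1+2\alpha)/2}$. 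Gronwall then yields
$$ \mathbf{E}|X_t - \bar X_t| \le C\bigl(a_{m-1} + m^{-1}a_{m-1}^{2\alpha} + \Delta^{\beta/2} + \Delta^{(1+2\alpha)/2}\bigr), $$
and optimising in $m$ produces the three regimes: for $\alpha = 0$ only the Yamada parameter is free, so $m \sim \log n$ forces the ceiling $1/\log n$; for $\alpha \in (0,1/2)$ the choice $a_{m-1} \sim n^{-\alpha}$ balances the first two terms and gives $n^{-\alpha} + n^{-\beta/2}$; for $\alpha = 1/2$ the Yamada detour is unnecessary and the classical globally-Lipschitz Gronwall argument gives $n^{-1/2} + n^{-\beta/2}$. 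Lifting this $L^1$-bound to an $L^p$-bound with the same non-$p$-scaled exponent in the first two regimes is done by interpolating with the uniform moment estimate $\sup_n \mathbf{E}\max_k |X_{t_k} - \bar X_{t_k}|^{p'} < \infty$ for $p' > p$, while in the $\alpha = 1/2$ regime the classical argument gives the full $n^{-p/2}$ scaling directly, completed by a Doob/BDG maximal inequality to move from $\sup$ inside to $\sup$ outside the expectation.

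The hard part will be the cross term $\mathbf{E}\bigl[\psi_m''(X_s - \bar X_s)\bigl(b(\bar X_s) - b(\bar X_{\kappa(s)})\bigr)^2\bigr]$: the blow-up of $\psi_m''$ near the origin is not obviously compensated by the one-step noise, because $\bar X_s - \bar X_{\kappa(s)}$ is independent only of $\mathcal F_{\kappa(s)}$ and not of $X_s - \bar X_s$. Handling it requires careful conditioning on $\mathcal F_{\kappa(s)}$, exploiting the conditional Gaussian structure of the Euler increment, together with H\"older's inequality with exponents tuned to $\alpha$. It is precisely this estimate that forces the logarithmic ceiling at $\alpha = 0$ and the specific polynomial rate for $\alpha \in (0, 1/2)$.
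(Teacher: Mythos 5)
The paper itself contains no proof of this theorem: it is quoted from Gy\"ongy \& R\'asonyi \cite{Gyoengy_Rasonyi}, with only the remark that it is obtained by the Yamada function technique, so your Yamada--Watanabe route is indeed the intended one. However, two steps of your sketch do not deliver the stated bounds as written. First, the bookkeeping of the It\^o-correction cross term is inconsistent: on its support $\psi_m''\le 2/(m a_m)$, so after taking expectations this term contributes of order $(m a_m)^{-1}\Delta^{(1+2\alpha)/2}$ (no conditional Gaussian structure is needed -- the crude sup bound on $\psi_m''$ together with the one-step moment estimate suffices), whereas your displayed Gronwall bound records only $\Delta^{(1+2\alpha)/2}$, and your closing paragraph then treats the same term as an unresolved difficulty. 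Once the amplification $(m a_m)^{-1}$ is kept, your optimisation no longer works: with $a_{m-1}\sim n^{-\alpha}$ the Yamada cancellation term is $m^{-1}a_{m-1}^{2\alpha}\sim m^{-1}n^{-2\alpha^2}\gg n^{-\alpha}$ for $\alpha\in(0,1/2)$, and the rigid coupling $a_{m-1}/a_m=e^m$ of your one-parameter family provably falls short of the stated rates (it loses a factor of order $e^{c\sqrt{\log n}}$ for $\alpha\in(0,1/2)$ and yields only $1/\sqrt{\log n}$ at $\alpha=0$). The correct choice decouples the two scales, as in \cite{Gyoengy_Rasonyi}: place the mollification at level $\varepsilon\sim\Delta^{1/2}$ with a separate width parameter $\delta$ ($\delta$ a constant for $\alpha>0$, a small power of $n$ for $\alpha=0$); then the cancellation term $\varepsilon^{2\alpha}/\log\delta$ produces $\Delta^{\alpha}$, respectively the $1/\log n$ ceiling, while the cross term $\delta(\varepsilon\log\delta)^{-1}\Delta^{(1+2\alpha)/2}$ remains of the same order.

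Second, the lifting from $L^1$ to $L^p$ by norm interpolation cannot give the theorem as stated: H\"older interpolation gives $\mathbf{E}|Z|^p\le(\mathbf{E}|Z|)^{(p'-p)/(p'-1)}(\mathbf{E}|Z|^{p'})^{(p-1)/(p'-1)}$, so the exponent of the rate is multiplied by a factor strictly less than one for every finite $p'$, and the non-$p$-scaled bound $n^{-\alpha}+n^{-\beta/2}$ is not recovered. What works is to feed the $L^1$ rate back into the error equation: bound $\mathbf{E}\max_k|X_{t_k}-\overline{X}_{t_k}|^p$ by BDG plus Gronwall and interpolate \emph{inside} the integrands (e.g. $\mathbf{E}|e_s|^{1+2\alpha}\le(\mathbf{E}|e_s|)^{1-2\alpha}(\mathbf{E}|e_s|^{2})^{2\alpha}$), which yields a self-bounding inequality whose solution carries the full rate; equivalently, run the Yamada/Gronwall argument directly at the level of $p$-th powers, as the cited source does. (For $\alpha=1/2$ your classical argument is fine, provided $a_2$ is still handled through its monotonicity in the $p$-th moment Gronwall step rather than via a Lipschitz bound.)
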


\medskip

This result can be applied to the CEV model 
$$ dX_t=\mu X_t \, dt + \sigma X_t^{\gamma} \, dW_t,$$
if the mapping $[0, \infty) \ni x \mapsto x^{\gamma} \in [0, \infty) $ is  extended to $(-\infty, 0)$, e.g. as  $(x^+)^{\gamma}$ or $|x|^{\gamma}$. Theorem \ref{euler_hoelder} then yields  strong convergence of the corresponding Euler schemes.

\smallskip
\begin{example} {\rm  Whether the convergence rates predicted from Theorem \ref{euler_hoelder}
are sharp  for the CEV model   remains an open problem. The following simulation study suggests that the Euler scheme has strong convergence order 1/2, at least for some parameter constellations. To  better preserve the positivity of the CEV process,  the Euler scheme is applied  to the SDE
$$ dX_t= \mu |X_t| \, dt + \sigma (X_t^+)^{\gamma} \, dW_t$$
   which still fulfills the assumptions of Theorem \ref{euler_hoelder} with $a_2=0$, i.e. $\beta=1$, and $\alpha=\gamma-1/2$. Its    solution  coincides with the CEV process.

Figure \ref{figure_cev_conv-rate} shows the empirical root mean square maximum error in the discretization points versus the step size for the  parameters
\begin{align*}
\textrm{\,Set I:} &\qquad \mu=0.1, \quad \sigma =0.3,  \quad \gamma=0.75, \quad T=1, \quad x_0=0.2\\
\textrm{Set II:} &  \qquad \mu=0.2, \quad  \sigma =0.5, \quad \gamma =0.55, \quad T=1, \quad x_0=0.5
\end{align*}
The empirical  mean square maximum error in the discretization points is estimated
by 
$$  \left( \frac{1}{N} \sum_{i=1}^N    \max_{k=0, \ldots, n} |X_{t_k}^{*,(i)}- \X_{t_k}^{n,(i)} |^2\right)^{1/2}
$$
with $N=5 \cdot 10^4$. Here $X^{*}$ is the numerical reference solution obtained by using the same Euler scheme with very small step size and
$X^{*,(i)}, \X^{n,(i)}$ are independent copies of  $X^{*}, \X^{n}$. For both sets of parameter a good accordance with the convergence order $1/2$ is obtained.
(The dots in the figure indicate convergence order $1/2$). 
\begin{figure}[htp]
 \leavevmode  \epsfig{figure=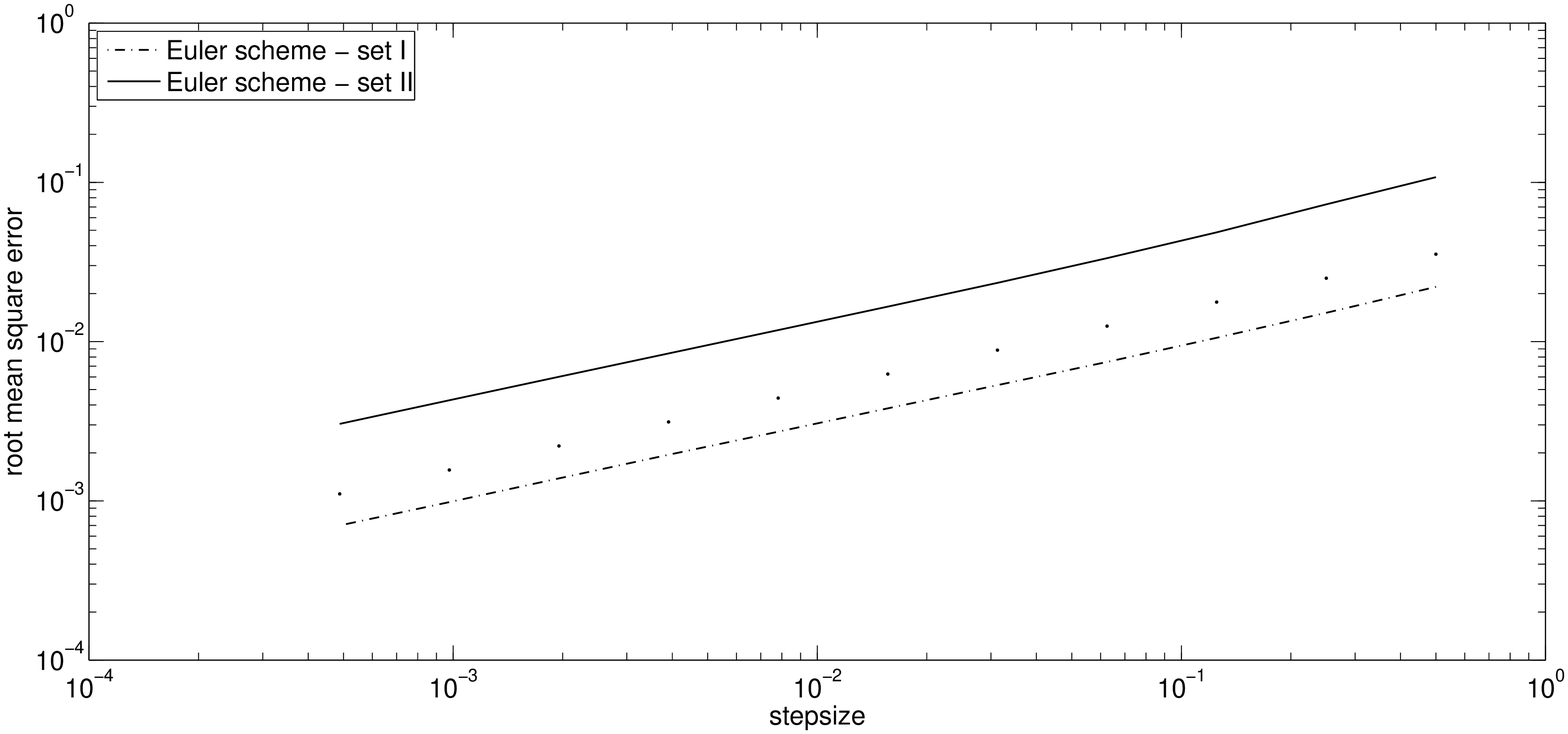, width=13cm,
height=8cm}  
\caption{\label{figure_cev_conv-rate} Root mean square error of the Euler scheme vs. step size for the CEV process for the parameter sets I  and II}
\end{figure}
A regression of the numerical data yields moreover the empirical convergence order 0.493923 for set I, respectively 0.509903 for set II.
\begin{flushright} $\diamond$ \end{flushright}}

\end{example}
\smallskip

But do  Theorems \ref{euler_hms_thm} and \ref{euler_hoelder}   have any consequences for the other examples? Unfortunately not: for the Heston, Ait-Sahalia and 3/2-models, no linear growth 
condition is satisfied. Even worse,   for the 
 Ait-Sahalia model and the 3/2-model the moments of the Euler scheme explode! In the case of the 3/2-model the latter can be deduced from the following Theorem, which was obtained by 
 Hutzenthaler, Jentzen \& Kloeden in 
  \cite{HMS1}.

 \smallskip

 \begin{theorem} \label{ref_euler_mom_exp} Let $a,b: \R \rightarrow \R$ and assume that the one-dimensional SDE \begin{equation*}
dX_t = a(X_t)\,dt + b(X_t)\,dW_t, \quad t \in [0,T],
\qquad
X_0 = x_0 \in \mathbb{R}
\end{equation*}
has a unique strong solution with $$\sup_{t \in [0,T]} \mathbb{E} | X_t |^p
< \infty $$ for one
$p \in [1,\infty)$.
Moreover, let $b(x_0) \neq 0$ and let
$ C \geq 1 $,
$ \beta > \alpha > 1 $ be constants
such that
\begin{equation*}  
 \max\!\big(\left| a(x) \right|,
 \left| b(x) \right|
 \big) \geq 
 \frac{1}{C} \cdot 
   \left| x \right|^{\beta}
 \quad
  \text{and}
 \quad
 \min\!\big(\left| a(x) \right|,
 \left| b(x) \right|
 \big) \leq 
 C \cdot  | x |^{\alpha}
\end{equation*}
for all $ |x| \geq C $. 
Then, the corresponding Euler scheme $\X^n$ with stepsize $\Delta=T/n$ satisfies
\begin{equation}   
\label{eq:euler_divergence}
  \lim_{n \rightarrow \infty}
  \mathbb{E}
    |X_T - \X^n_{T}|^p= \infty
  \quad
  \text{and}
  \quad
  \lim_{n \rightarrow \infty}
  \big| 
    \mathbb{E}|X_T|^p
    - 
    \mathbb{E}| \X^n_T|^p 
  \big| 
  = \infty .
\end{equation}
\end{theorem}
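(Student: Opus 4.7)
The strategy is a ``rare but catastrophic event'' argument. I will construct an event $A_n\subset\Omega$ whose probability decays only at a rate $\exp(-c_1 n^q)$ for some $q=q(\alpha,\beta)>0$, but on which $|\X^n_T|$ attains \emph{doubly} exponential values of the form $2^{\beta^{n-1}}$. Because $\beta>1$, doubly exponential growth beats the polynomial--rate exponential decay of $\IP(A_n)$, so already the restriction to $A_n$ gives
$$
   \EX|\X^n_T|^p \;\geq\; 2^{p\beta^{n-1}}\,\IP(A_n)\;\longrightarrow\;\infty .
$$
Using the standing assumption $\sup_{t\in[0,T]}\EX|X_t|^p<\infty$, the two divergences in \eqref{eq:euler_divergence} then follow at once from the inequalities $\EX|X_T-\X^n_T|^p\geq 2^{1-p}\EX|\X^n_T|^p-\EX|X_T|^p$ and $\bigl|\EX|X_T|^p-\EX|\X^n_T|^p\bigr|=\EX|\X^n_T|^p-\EX|X_T|^p$ (valid for $n$ large).

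The event $A_n$ splits into independent pieces $A_n=A_n^{(1)}\cap A_n^{(2)}$, where
$$
   A_n^{(1)}=\bigl\{\Delta_0 W\in[r_n,\,r_n+\sqrt{\Delta}]\bigr\},
   \qquad
   A_n^{(2)}=\bigl\{|\Delta_k W|\in[\tfrac12\sqrt{\Delta},\,2\sqrt{\Delta\log n}],\ k=1,\dots,n-1\bigr\}.
$$
The constant $r_n$ is chosen polynomially in $n$, large enough that $|b(x_0)|r_n\geq 4M_n$, where $M_n$ is the larger of the \emph{instability threshold} $\theta_n:=(4Cn/T)^{1/(\beta-1)}$ (past which the map $y\mapsto y^{\beta}\Delta/C$ at least doubles) and the \emph{noise-dominance threshold} $\Theta_n:=(8C^2\sqrt{n\log n/T})^{1/(\beta-\alpha)}$ (past which the subdominant coefficient cannot cancel the dominant one on $A_n^{(2)}$). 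On $A_n^{(1)}$, using $b(x_0)\neq 0$, the first Euler increment forces $|\X^n_{t_1}|\geq 2M_n$. The Gaussian density bound produces $\IP(A_n^{(1)})\geq\exp(-c_1 n^q)$ for an explicit polynomial exponent $q$, while a direct estimate (each of the $n-1$ independent pieces has probability $\geq 1/2$) gives $\IP(A_n^{(2)})\geq 2^{-n}$. Hence $\IP(A_n)\geq\exp(-c_1' n^q)$.

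The heart of the argument is the inductive bound, valid on $A_n$:
$$
   |\X^n_{t_{k+1}}| \;\geq\; \frac{1}{4C}\,|\X^n_{t_k}|^{\beta}\,\max(\Delta,\,|\Delta_k W|),\qquad k=1,\dots,n-1.
$$
At each step at least one of $|a(\X^n_{t_k})|,|b(\X^n_{t_k})|$ is $\geq |\X^n_{t_k}|^\beta/C$ and the other is $\leq C|\X^n_{t_k}|^\alpha$; a case distinction (drift-dominant vs.\ diffusion-dominant) combined with the two-sided bound on $|\Delta_k W|$ from $A_n^{(2)}$ shows that the subordinate contribution is of strictly smaller order $|\X^n_{t_k}|^\alpha\sqrt{\Delta\log n}$ once $|\X^n_{t_k}|\geq \Theta_n$. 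Setting $\tilde y_k=|\X^n_{t_k}|/\theta_n$ reduces the recursion to $\tilde y_{k+1}\geq \tilde y_k^{\beta}$; iterated from $\tilde y_1\geq 2$ this yields $|\X^n_T|\geq \theta_n\cdot 2^{\beta^{n-1}}$, which is the claimed doubly--exponential lower bound.

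The main obstacle is precisely this inductive growth step: one has to ensure that the subdominant coefficient, driven by a Brownian increment of either sign, never cancels the large push from the dominant coefficient. The strict inequality $\beta>\alpha$ is essential---the exponent $1/(\beta-\alpha)$ in $\Theta_n$ would diverge if $\alpha=\beta$---and this is precisely the structural reason why strong convergence succeeds in the linear--growth regime (Theorem~\ref{euler_hms_thm}) but must fail here. A further subtlety is that the dominant coefficient may switch between $a$ and $b$ as $k$ varies, so the recursion has to be stated symmetrically in $\Delta$ and $|\Delta_k W|$, and the case analysis has to be carried out uniformly in $k$.
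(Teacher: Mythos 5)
The survey does not actually prove Theorem~\ref{ref_euler_mom_exp}: it quotes it from \cite{HMS1} and only sketches the mechanism on the example $dX_t=-X_t^3\,dt+\sigma\,dW_t$ (a rare event for the Brownian increments on which the deterministic instability produces double-exponential growth that the exponentially small probability cannot compensate). Your proposal reconstructs precisely that argument -- a large first increment pushing $\X^n_{t_1}$ above an $n$-dependent threshold, two-sided control of the later increments, a per-step superlinear recursion yielding $|\X^n_T|\geq\theta_n 2^{\beta^{n-1}}$ on an event of probability at least $\exp(-c n^q)$, and then the elementary inequalities $\EX|X_T-\X^n_T|^p\geq 2^{1-p}\EX|\X^n_T|^p-\EX|X_T|^p$ together with the assumed moment bound on $X$ to obtain both limits in \eqref{eq:euler_divergence} -- and this is exactly the route of the original proof in \cite{HMS1}, so in outline it is sound. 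One displayed step is overstated: the recursion $|\X^n_{t_{k+1}}|\geq\frac{1}{4C}|\X^n_{t_k}|^{\beta}\max(\Delta,|\Delta_k W|)$ cannot hold in the drift-dominant case (if $|b(\X^n_{t_k})|$ is negligible and $|\Delta_k W|\approx 2\sqrt{\Delta\log n}\gg\Delta$, the step is only of size about $|\X^n_{t_k}|^{\beta}\Delta/C$, far below $\frac{1}{4C}|\X^n_{t_k}|^{\beta}|\Delta_k W|$). This is repairable rather than fatal, because your normalization $\tilde y_k=|\X^n_{t_k}|/\theta_n$ only uses the weaker bound $|\X^n_{t_{k+1}}|\geq\frac{1}{4C}|\X^n_{t_k}|^{\beta}\Delta$, and that version does follow from your case distinction: in the drift-dominant case via the thresholds $\theta_n$ and $\Theta_n$, and in the diffusion-dominant case via the lower bound $|\Delta_k W|\geq\frac12\sqrt{\Delta}\geq\frac12\Delta$, which is indeed indispensable there to rule out near-cancellation of $\X^n_{t_k}+a(\X^n_{t_k})\Delta$ by the huge diffusion term. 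Two small further adjustments are needed in a written-out version: enlarge the constant in $\theta_n$ (e.g. $8C$ instead of $4C$) so that the $-|\X^n_{t_k}|$ term is absorbed uniformly in $k$ also when $1<\beta<2$, and note explicitly that $M_n\geq C$ for large $n$, so the polynomial bounds on $a$ and $b$ are applicable along the entire trajectory on $A_n$ (at the initial step only $b(x_0)\neq 0$ is used, as you do).
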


\smallskip

 In the case of the 3/2-model, which has finite moments up to order $p < 2+ \frac{2c_1}{c_3^2}$,  the coefficients are  
 $$ a(x)=- c_2 x^2 + c_1 c_2 x, \qquad b(x)= c_3(x^{+})^{3/2}, \qquad x \in \mathbb{R},$$
 so the assumptions of the above Theorem are satisfied for $\alpha=3/2$, $\beta=2$ and $C$ sufficiently large.
 
Concerning the Ait-Sahalia model, the moments of the Euler scheme  already explode in the second step. Here the first step  of the Euler scheme  has a Gaussian distribution with mean $x_0 + (\alpha_{-1}x_0^{-1} - \alpha_0 + \alpha_1 x_0 - \alpha_2 x_0^{r}) \Delta $ 
and variance $\alpha_3^2 x_0^{2 \rho}\Delta$.  The
inverse of the first step must be computed  for the second step of the Euler scheme, so the moments of the second step are infinite, since inverse moments of a Gaussian random variable do not exist.
 
 \smallskip
 
Why the  moments of the Euler scheme diverge for superlinearly growing coefficients -- even without a singularity -- can  be nicely illustrated by considering the 
SDE
\begin{equation}\label{exsde2}
d X_t =
- X_t^3\,dt + \sigma dW_t,
\qquad
X_0 = x_0
\end{equation} with $\sigma \geq 0$
for which the Euler scheme reads as
\begin{equation}\label{exeuler2}
\X^n_{t_{k+1}}
= \X^n_{t_k} \left(1 -  |\X^n_{t_k}|^2 \Delta \right)
+ \sigma \Delta_k W.
\end{equation}

In the deterministic case, i.e., 
\eqref{exsde2} and \eqref{exeuler2} with $\sigma=0$, 
the Euler approximation of the deterministic equation
is known to
be unstable if the initial value is large
(see e.g. Chapter 6 in \cite{db02}).
For example, if $x_0=n$, $T=1$ and $\Delta = n^{-1}$ then
\begin{equation*}
\X_{t_1}^n= n \left(1- \frac{n^2}{n}\right) 
\approx-n^2
\end{equation*}
and therefore
$$
\X_{t_2}^n = \X_{t_1}^n \left(1 -  |\X^n_{t_1}|^2 \Delta \right)
\approx n^5. 
$$
Iterating this further, one obtains
$$ \left| \X^n_{t_k} \right| 
\gtrapprox n^{ \left( 2^k \right) } $$
for  $k=0,1,\dots,n$.
Thus, $\X_{t_n}^n$ grows double-exponentially
fast in $n$. In the presence of noise ($\sigma>0$) there is an
exponentially small event that the Brownian motion leaves the
interval $[-2 n,2 n]$ and on this event the
 approximations grow double-exponentially fast due to the deterministic dynamics.
Consequently this double-exponentially  growth can not be  compensated by the exponentially small probability of this event, which leads to the moment explosion of the 
Euler approximation.

\smallskip

\begin{example}
{\rm   That rare events lead to the explosion of the moments of the Euler scheme can be also seen  from the following numerical example.
Consider the volatility process in the 3/2-model 
$$ dV_t =  c_1 V_t( c_2 -  V_t) \, dt +  c_3 V_t^{3/2}  \, dW_t,  \quad V_0=v_0>  0$$
with 
$$ c_1=1.2, \qquad  c_2=0.8, \qquad  c_3=1, \qquad T=4, \qquad v_0= 0.5$$
and try to compute 
$$ {\mathbf E} |X_T|=  0.566217
$$
using the standard  Monte Carlo estimator 
$$  \frac{1}{N} \sum_{i=1}^{N} |\X_{T}^{n,(i)}| $$
where $\X_{T}^{n,(1)}, \ldots, \X_{T}^{n,(N)}$ are iid copies of $\X_{T}^{n}$.
The exact value for ${\mathbf E} |X_T|$  is computed using the inverse moments of the CIR process, see e.g. \cite{HK}.
 While for a moderate number of repetitions  the estimator seems to converge for small step sizes (and the 'Inf'-outputs seem to be some numerical instabilities due to the large step sizes), the estimator explodes even for small step sizes when increasing the number of repetitions -- as predicted by Theorem \ref{ref_euler_mom_exp}. 
 Despite of this the Euler scheme for this SDE  converges pathwise with rate $1/2-\varepsilon$ due to Theorem \ref{taylor_mod}.
\medskip

 \begin{center}
\begin{tabular}{c||c|c|c|c|c|c|} 
Repetitions $N$ /  stepsize  $\Delta$ & $2^0$  &  $ 2^{-2}$ & $ 2^{-4 }$ & $2^{-6}$ &  $ 2^{-8}$ & $2^{-10}$  \\ \hline \hline
 $10^3$  &  6.327232 & Inf  & Inf  &  0.550185  &  0.553499  &  0.555069
 \\ \hline
 $10^4$  &   6.894698 &  Inf &  Inf & Inf &  0.562716 &  0.563352  \\ \hline
 $10^5$  &   7.430606 & Inf  &  Inf & Inf &  0.566218 & 0.567106
 \\ \hline
 $10^6$  &   7.227379 & Inf & Inf &  Inf &  Inf &  0.565750
 \\ \hline
 $10^7$  &   7.279187 & Inf & Inf &  Inf &  Inf &  Inf
\end{tabular}
\end{center}

\medskip

 A similar moment explosion arises if a Multi-level Monte Carlo method is used to estimate $ {\mathbf E} |X_T|$. This is shown, also for more general SDEs, in  \cite{MLMC_expl}.

 \begin{flushright} $\diamond$ \end{flushright}}
\end{example} 
\smallskip

However, in some cases using the Euler scheme one still obtains a convergent Monte Carlo estimator for functionals of the type $\mathbf{E} \phi(X_T)$. 
The standard Euler-based estimator for the latter quantity 
is 
\begin{align} 
 \frac{1}{N} \sum_{i=1}^{N} \phi \big(\X_{T}^{n,(i)} \big). \label{MC_st}
\end{align}
In the classical case, i.e.
if $a,b,\phi \in C^{4}(\R; \R)$ with at most polynomially growing derivatives and $a$, $b$ globally Lipschitz, one has
$$ \mathbf{E} 
  \left|
    \frac{1}{N} 
      \sum_{i=1}^{N} \phi \big(\X_{T}^{n,(i)} \big) - \mathbf{E} \phi(X_T) 
  \right|^2 
  \leq  K_{Bias} \cdot \frac{1}{n^2} + K_{MC} \cdot \frac{1}{N},
$$ see e.g. \cite{KP}. The first term on the right hand side corresponds to the squared bias of the Euler scheme, while the second term corresponds to the variance of the Monte Carlo simulation. 
It is thus optimal to choose $N=n^2$ for balancing both  terms with respect to the computational cost (number of arithmetic operations, function evaluations and   random numbers used), see \cite{DG}.  The corresponding 
Monte Carlo estimator has then convergence order $1/3$ in terms of the computational cost.

\smallskip

Hutzenthaler \& Jentzen could show in \cite{HuJe} that if the global Lipschitz assumption on the drift-coefficient is weakened to
 \begin{equation}\label{snew.onesided}
  (x-y)  ( a (x) - a (y) ) \leq 
  L\left(x-y\right)^2,
  \quad \, x,y\in\mathbb{R} 
\end{equation}
for some $L>0$, then one still has
\begin{equation*}  
  \left|
    \frac{1}{N^2} 
      \sum_{i=1}^{N^2} \phi (\X_{T}^{N,(i)} ) -  \mathbf{E} \phi(X_T)
  \right|
  \leq  \eta_{\varepsilon} \cdot N^{-(1-\varepsilon)}
\end{equation*}
almost surely
for all $\varepsilon >0$ and almost-surely finite and non-negative random variables $\eta_{\varepsilon}$.

\smallskip

Weak approximation under non-standard assumptions is also studied by Milstein \& Tretyakov in  \cite{Miltret}. In their approach, simulations which leave a ball with sufficiently large radius are discarded. 
In the context of the Euler scheme with equidistant stepsize  this estimator reads as
$$ \frac{1}{N} 
      \sum_{i=1}^{N} \phi \big (\X_{T}^{n,(i)} \big) \cdot \mathbf{1}_{ \{ \sup_{k=0, \ldots, n} |X_{t_k}^{n,(i)}| \leq  R \} }. $$
For coefficients $a,b$ and functions $\phi$ satisfying a Lyapunov-type condition still a convergent Monte Carlo estimator is obtained, when matching the discarding radius $R$ appropriately to the number of repetitions $N$ and the stepsize of the discretization $n$.

\smallskip

Condition \eqref{snew.onesided} on the drift coefficient is the so-called one-sided Lipschitz condition. This condition is also very useful to obtain strong convergence results for implicit Euler methods and tamed Euler schemes, which   will be explained in the next section.
Very recently a unifying framework for the analysis of Euler-type methods has been provided in \cite{HJ_uni}.

\bigskip
\bigskip

\section{Strong convergence of implicit and tamed Euler schemes}

The condition in Theorem  \ref{euler_hms_thm}  for the strong convergence of the Euler scheme which is usually  difficult to verify is the finiteness of its moments, i.e.
$$ \sup_{n \in \mathbb{N}} \, \mathbf{E} \max_{k=0, \ldots, n} |\overline{X}_{t_k}^n|^p < \infty $$
for some $p>2$. Moreover, this condition may even fail to hold for specific equations, see Theorem \ref{ref_euler_mom_exp}.
 However, both problems can be overcome in some situations if appropriate drift-implicit Euler schemes are used. The split-step backward Euler scheme
is defined as
\begin{align}\label{sses}
 X^{*}_{t_{k}} &= \X_{t_k} +  a(X^*_{t_{k}})\Delta, \qquad  \quad
\overline{X}_{t_{k+1}} = X^{*}_{t_{k}} + \sum_{j=1}^m b_j(X^*_{t_{k}}) \Delta_k W^{(j)}
\end{align}
for $k=0,1, \ldots $ with $ \overline{X}_{0}=x_0$, while the backward  or drift-implicit Euler scheme reads as
\begin{align}\label{bes}
 \overline{X}_{t_{k+1}} = \overline{X}_{t_{k}} + a(\overline{X}_{t_{k+1}}) \Delta+  \sum_{j=1}^m b_j(\overline{X}_{t_{k}}) \Delta_k W^{(j)}. 
 \end{align}
Both schemes are defined via an implicit equation, whose solvability relies  on the properties of the drift-coefficient $a$. The following result has been obtained by 
Higham, Mao \& Stuart in \cite{HMS}.

\smallskip

\begin{theorem}\label{euler_sses_thm} Let $a, b_j \in C^{1}(\mathbb{R}^d; \mathbb{R}^d)$, $j=1, \ldots, m$, and assume that there exist constants $L_1,L_2>0$ such that
\begin{align*}\langle x-y,a(x)-a(y) \rangle & \leq L_1 \cdot |x-y|^2, \qquad x,y \in \mathbb{R}^d, \\
 \sum_{j=1}^m |b_j(x)-b_j(y)|^2 & \leq L_2  \cdot |x-y|^2, \qquad        x,y \in \mathbb{R}^d.    
\end{align*} Then, the split-step backward Euler scheme given by \eqref{sses} with stepsize $\Delta =T/n$ is well defined for $\Delta <  \Delta_*:=1/ \max \{ 1+ 2L_1, 4L_2 \} $ and satisfies
$$ \lim_{n \rightarrow \infty}  \mathbf{E}  \max_{k=0, \ldots, n} |X_{t_k}- \X_{t_k}^n |^2=0. $$
\end{theorem}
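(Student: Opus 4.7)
The plan is to follow the classical strategy for implicit schemes under a one-sided Lipschitz drift: establish well-posedness of the implicit step, derive uniform moment bounds that exploit the implicit structure, and then combine an It\^o-type energy estimate with a localization argument to pass from bounded to unbounded drift. For well-posedness, fix $x \in \mathbb{R}^d$ and $\Delta < \Delta_*$, and set $F_x(y) = y - \Delta a(y) - x$. The one-sided Lipschitz condition gives
\[ \langle F_x(y_1) - F_x(y_2), y_1 - y_2 \rangle \geq (1 - \Delta L_1)|y_1 - y_2|^2, \]
so $F_x$ is strongly monotone and coercive; combined with continuity from $a \in C^1$, Browder--Minty yields the unique root $X^*_{t_k}$.

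For the moment bounds, I would take the inner product of $X^*_{t_k} = \X_{t_k} + \Delta a(X^*_{t_k})$ with $X^*_{t_k}$ itself and invoke the one-sided Lipschitz bound evaluated at $y = 0$, after a Young-inequality split of $\langle X^*_{t_k}, a(0) \rangle$. This yields $|X^*_{t_k}|^2 \leq C\big(|\X_{t_k}|^2 + 1\big)$ for $\Delta < \Delta_*$. Substituting into $\X_{t_{k+1}} = X^*_{t_k} + \sum_j b_j(X^*_{t_k})\Delta_k W^{(j)}$, squaring, taking conditional expectations, and using the linear growth of the $b_j$ (from the global Lipschitz hypothesis) produces a discrete Gronwall recursion $\mathbf{E}|\X_{t_{k+1}}|^2 \leq (1 + C\Delta)\mathbf{E}|\X_{t_k}|^2 + C\Delta$. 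Applying BDG to the martingale increments extends this to $\sup_n \mathbf{E}\max_{k \leq n}|\X^n_{t_k}|^p < \infty$ for every $p \geq 2$; the corresponding moments of $X$ are standard.

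Next I would introduce the continuous-time interpolant $\bar{X}(t) = x_0 + \int_0^t a(X^*(s))\,ds + \sum_j \int_0^t b_j(X^*(s))\,dW^{(j)}_s$, with $X^*(s) := X^*_{t_k}$ on $[t_k, t_{k+1})$; a one-line induction shows $\bar{X}(t_k) = \X_{t_k}$. Applying It\^o's formula to $|e(t)|^2$ with $e(t) = X(t) - \bar{X}(t)$, the drift contribution $2\langle e, a(X) - a(X^*)\rangle$ splits via $e = (X - X^*) + (X^* - \bar{X})$ into a piece bounded by $2L_1|X - X^*|^2$ (one-sided Lipschitz) and a cross term, while the diffusion quadratic variation $\sum_j |b_j(X) - b_j(X^*)|^2$ is bounded by $L_2|X - X^*|^2$. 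A direct computation from $X^*(t) - \bar{X}(t) = a(X^*_{t_k})(t_{k+1} - t) - \sum_j b_j(X^*_{t_k})(W^{(j)}_t - W^{(j)}_{t_k})$, combined with the moment bounds of the previous step, gives $\mathbf{E}\sup_{t \leq T}|X^*(t) - \bar{X}(t)|^2 = O(\Delta)$ on bounded events.

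The main obstacle is the cross term $2\langle X^* - \bar{X}, a(X) - a(X^*)\rangle$: since $a$ is only $C^1$ and may grow superlinearly, the difference $|a(X) - a(X^*)|$ cannot be controlled pointwise by $|X - X^*|$ without localization. I would introduce the stopping time
\[ \rho_R = \inf\big\{ t \in [0,T] \, : \, |X(t)| \vee \max_{t_k \leq t}|X^*_{t_k}| \geq R \big\}, \]
on which $a$ is Lipschitz with some constant $L_a(R)$, and use Young's inequality to absorb the cross term into $|e|^2 + L_a(R)^2|X^* - \bar{X}|^2$. Gronwall then yields $\mathbf{E}\sup_{t \leq T \wedge \rho_R}|e(t)|^2 \leq C(R)\Delta$. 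The moment bounds imply $\mathbf{P}(\rho_R < T) \leq C_p R^{-p}$, and Cauchy--Schwarz on $\{\rho_R < T\}$ with sufficiently large $p$ allows us to pass to $\mathbf{E}\sup_{t \leq T}|e(t)|^2 \to 0$ by letting $R = R(n) \to \infty$ slowly enough that $C(R(n))\Delta \to 0$. Since $\mathbf{E}\max_k|X_{t_k} - \X^n_{t_k}|^2 \leq \mathbf{E}\sup_{t \leq T}|e(t)|^2$, this gives the claim. The quantitatively delicate point is that $C(R)$ depends exponentially on $L_a(R)^2$ through the Gronwall exponent, so the tradeoff between $R$ growing fast (to make the exit probability small) and $R$ growing slowly (to keep $C(R)\Delta$ small) is what forces the use of high-moment bounds on $\X$ and $X$.
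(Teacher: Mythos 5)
Your argument is essentially sound, but it follows a genuinely different route from the one sketched in the paper. You give a direct error analysis: well-posedness of the implicit step via strong monotonicity (which indeed holds since $\Delta<\Delta_*$ forces $\Delta L_1<1/2$), moment bounds exploiting the implicitness, a continuous-time interpolant $\bar X$ with $\bar X(t_k)=\X_{t_k}$, an It\^o energy estimate for $|X-\bar X|^2$ in which the one-sided Lipschitz condition handles the pair $(X,X^*)$ and the diffusion term is controlled by $L_2$, and finally a localization via the stopping time $\rho_R$ with a diagonal choice $R=R(n)\to\infty$ slowly enough that $C(R(n))\Delta\to 0$ — this is in spirit the original Higham--Mao--Stuart analysis. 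The paper instead argues as follows: the one-sided Lipschitz drift and the implicitness give moments of all orders for the SDE and the scheme; the split-step scheme is recognized as the \emph{explicit} Euler scheme applied to the perturbed SDE $dX^{\Delta}_t=a(h_\Delta(X^{\Delta}_t))\,dt+\sum_j b_j(h_\Delta(X^{\Delta}_t))\,dW^{(j)}_t$ with $h_\Delta(x)=x+\Delta a(h_\Delta(x))$; since $h_\Delta\to\mathrm{id}$ the perturbed SDE is close to the original, and convergence then follows along the lines of Theorem \ref{euler_hms_thm}, i.e.\ from pathwise convergence plus uniform integrability supplied by the moment bounds. Your direct approach is self-contained and quantitative in structure (it is the natural starting point for the rate result of Theorem \ref{euler_sses_bes_thm} when the drift is additionally polynomially Lipschitz), at the price of the delicate $R$-versus-$\Delta$ balancing and higher-moment bounds; the paper's route reuses the uniform-integrability machinery and isolates the implicitness in the algebraic device $h_\Delta$, which makes the argument shorter but conceals the quantitative tradeoffs. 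Two small glosses in your sketch deserve a remark: the claim that all $p$-th moments of the scheme follow by ``applying BDG to the martingale increments'' really requires redoing the recursion for $p$-th powers (it is true, but not a one-liner), and your one-step bound $|X^*_{t_k}|^2\le C(|\X_{t_k}|^2+1)$ has a constant that degenerates as $\Delta\uparrow\Delta_*$, so the Gronwall-type moment recursion should be stated with a factor $1+C\Delta$, which is available once $\Delta$ is bounded away from $\Delta_*$ — harmless here since the assertion is a limit as $n\to\infty$.
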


\smallskip

The conditions on the coefficients imply that the SDE has bounded moments of any order, and  also allow  one to show that  the split-step  Euler method has moments of any order. 
The implicitness of the method is crucial for the latter. 
 Furthermore, the split-step Euler method coincides with the explicit Euler method for the perturbed SDE
\begin{align} dX_t^{\Delta}= a(h_{\Delta}(X_t^{\Delta}))\,dt  + \sum_{j=1}^m b_j(h_{\Delta}(X_t^{\Delta})) \, dW^{(j)}(t), \qquad X_0^{\Delta}=x_0.  
\label{pertub_sde} \end{align}
Here the function $h_{\Delta}: \mathbb{R}^d \rightarrow \mathbb{R}^d$ is defined as the unique solution of the equation
$$ h_{\Delta}(x) = x +  a(h_{\Delta}(x)) \Delta, \qquad x \in \mathbb{R}^d$$
 with $\Delta < \Delta_*$. Since $h_{\Delta}$ converges to the identity for $\Delta \rightarrow 0$ this perturbed SDE is close to original SDE. To establish  Theorem \ref{euler_sses_thm}, it thus 
 remains to show that the split-step backward Euler scheme is close to \eqref{pertub_sde}, which can be done along the lines of the proof of Theorem \ref{euler_hms_thm}.
 
 \smallskip

If the drift-coefficient is additionally also polynomially Lipschitz, then      the standard strong convergence rate $1/2$ can even be recovered.

\begin{theorem}\label{euler_sses_bes_thm} Let the assumptions of Theorem \ref{euler_sses_thm} hold and assume additionally that there exist $C,q>0$ such that
$$ |a(x)-a(y)| \leq C \cdot (1+|x|^q + |y|^q) \cdot |x-y|, \qquad x,y \in \mathbb{R}^d.$$   Then, the split-step backward Euler scheme given by \eqref{sses} and the backward Euler scheme given by \eqref{bes} are well defined for  $\Delta <  \Delta^*$ and have strong convergence order $1/2$, i.e. for both schemes there exists a constant $K>0$ such that
$$   \mathbf{E}  \max_{k=0, \ldots, n} |X_{t_k}- \X_{t_k}^n |^2 \leq K \cdot n^{-1}. $$
 \end{theorem}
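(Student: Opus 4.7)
My plan is to handle well-definedness, moment bounds, and the $O(\Delta^{1/2})$ error in that order, treating the split-step backward Euler scheme (SSES) first and then deducing the statement for the backward Euler scheme (BES) by the same template.

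For well-definedness, the one-sided Lipschitz condition on $a$ gives
\begin{equation*}
\langle y_1 - y_2,\, (y_1 - \Delta a(y_1)) - (y_2 - \Delta a(y_2))\rangle \geq (1 - L_1 \Delta) |y_1 - y_2|^2,
\end{equation*}
so for $\Delta < \Delta_*$ the map $F_\Delta(y) = y - \Delta a(y)$ is strictly monotone and coercive, hence a homeomorphism of $\mathbb{R}^d$; its inverse $h_\Delta$ gives unique solutions to both implicit relations. For moment bounds, I would take the inner product of $X^*_{t_k} = \X_{t_k} + \Delta a(X^*_{t_k})$ with $X^*_{t_k}$, apply the one-sided Lipschitz condition with reference point $0$ (so that $\langle x, a(x)\rangle \leq L_1|x|^2 + |a(0)||x|$), and absorb the drift term on the left to obtain $|X^*_{t_k}|^2 \leq (1 - 2 L_1\Delta)^{-1}(|\X_{t_k}|^2 + O(\Delta))$. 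Combined with the Lipschitz diffusion step and the BDG inequality, an iterated discrete Gronwall argument yields $\sup_n \mathbf{E}\max_k |\X^n_{t_k}|^p < \infty$ for every $p \geq 2$. Implicitness is essential here, since it is what lets the drift term be subtracted instead of bounded.

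For the rate $1/2$ for SSES I would exploit the representation, already noted in the sketch of Theorem \ref{euler_sses_thm}, of SSES as the explicit Euler scheme for the perturbed SDE \eqref{pertub_sde} with coefficients evaluated at $h_\Delta$, so that $|h_\Delta(x) - x| = \Delta |a(h_\Delta(x))|$. Splitting
\begin{equation*}
X_{t_k} - \X^n_{t_k} = (X_{t_k} - X^\Delta_{t_k}) + (X^\Delta_{t_k} - \X^n_{t_k}),
\end{equation*}
I would bound each term in $L^2$ by $O(n^{-1/2})$. For the first, subtracting the SDEs and applying It\^o's formula to $|X_t - X^\Delta_t|^2$, the one-sided Lipschitz condition controls the drift difference, the global Lipschitz condition on $b$ controls the diffusion, and the residual $a(X^\Delta) - a(h_\Delta(X^\Delta))$ is of order $\Delta$ times a polynomial in $|X^\Delta|$ via the polynomial Lipschitz assumption; the uniform moment bounds then make Gronwall close and give $\mathbf{E}\sup_t|X_t - X^\Delta_t|^2 = O(\Delta)$. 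For the second, the classical mean-square Euler analysis of \cite{KP} applies to \eqref{pertub_sde}, the polynomial Lipschitz bound replacing the global one thanks to H\"older's inequality combined with uniform $L^p$ moments of $X^\Delta$ and $\X^n$. For BES the same strategy works directly on the scheme: one forms $e_k = X_{t_k} - \X^n_{t_k}$, takes the inner product of $e_{k+1}$ with the implicit drift contribution to produce $(1 - 2L_1\Delta)|e_{k+1}|^2 \leq |e_k|^2 + \textrm{local error terms}$, and finishes with discrete Gronwall and BDG.

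The main obstacle is managing the polynomial Lipschitz factor $(1 + |x|^q + |y|^q)$ when estimating $\mathbf{E}|a(X_s) - a(X_{t_k})|^2$ and $\mathbf{E}|a(X_{t_k}) - a(\X^n_{t_k})|^2$: after H\"older's inequality these quantities require $L^{(2+2q)r}$ moments of both $X$ and $\X^n$ uniformly in $n$, which is precisely why arbitrary-order moment bounds for the implicit schemes form the crucial preparatory step and why the analogous statement fails for the explicit Euler scheme in the polynomially growing regime (cf.\ Theorem \ref{ref_euler_mom_exp}).
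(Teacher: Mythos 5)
The paper itself gives no proof of this theorem -- it is quoted from \cite{HMS}, and the surrounding text only sketches the route via the perturbed SDE \eqref{pertub_sde} for the rate-free Theorem \ref{euler_sses_thm}. Measured against that route, your architecture is the right one: monotonicity of $x\mapsto x-\Delta a(x)$ for well-posedness, implicitness to get uniform moments of all orders, the decomposition $X-\overline{X}^n=(X-X^{\Delta})+(X^{\Delta}-\overline{X}^n)$ through the perturbed SDE, and a discrete ``pair the error with the implicit drift'' Gronwall for the backward Euler scheme. The comparison $X$ vs.\ $X^{\Delta}$ is also handled correctly (the residual is $\Delta\,|a(h_{\Delta}(X^{\Delta}))|$ times a polynomial factor, controlled by moments), provided you note explicitly that $a\circ h_{\Delta}$ inherits the one-sided Lipschitz property with constant $L_1/(1-L_1\Delta)^2$ and $b_j\circ h_{\Delta}$ the global Lipschitz property, uniformly for $\Delta$ bounded away from $\Delta_*$; this is also what gives uniform-in-$\Delta$ moments of $X^{\Delta}$.

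The genuine gap is in your treatment of $X^{\Delta}-\overline{X}^n$, i.e.\ the Euler error for \eqref{pertub_sde}: you invoke ``the classical mean-square analysis of \cite{KP}, the polynomial Lipschitz bound replacing the global one thanks to H\"older's inequality combined with uniform $L^p$ moments.'' As written this does not close. In the Gronwall step one must bound the same-time drift term, essentially $\mathbf{E}\bigl[(1+|X^{\Delta}_s|^{q}+|\overline{X}_{\eta(s)}|^{q})^{2}\,|e_s|^{2}\bigr]$ with $e$ the error; H\"older separates the product only at the price of replacing $\mathbf{E}|e_s|^{2}$ by $(\mathbf{E}|e_s|^{2r})^{1/r}$, so you never obtain a closed inequality for $\mathbf{E}\sup_s|e_s|^{2}$, and bootstrapping to higher error moments reproduces the same problem at every level (boundedness of all moments of both processes does not repair this). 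The correct repair is exactly the device you already use in the other two places: exploit the (inherited) one-sided Lipschitz condition for the same-time pairing $\langle e_s,\,a(h_{\Delta}(X^{\Delta}_s))-a(h_{\Delta}(\overline{X}_s))\rangle$, and reserve the polynomial Lipschitz condition, H\"older and higher moments for the one-step increment term $a(h_{\Delta}(\overline{X}_s))-a(h_{\Delta}(\overline{X}_{\eta(s)}))$, whose second moment is $O(\Delta)$; this is how the proof in \cite{HMS} proceeds. A secondary point to watch in your backward Euler recursion: because $e_{k+1}$ is not $\mathcal{F}_{t_k}$-measurable, the pairing of $e_{k+1}$ with the stochastic local error $\int_{t_k}^{t_{k+1}}(b(X_s)-b(X_{t_k}))\,dW_s$ has no vanishing expectation, so one must first rearrange (pair with $e_k$ plus an $O(\Delta)$ correction) before applying BDG and discrete Gronwall.
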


\smallskip
As pointed out above, in each step of  both schemes an implicit equation has to be solved. 
If the function $h_{\Delta}$ is not known explicitly, this has to be done numerically and
may be time-consuming. Solving implicit equations can be avoided by using the so-called tamed Euler method, which has been proposed
by Hutzenthaler, Jentzen \&  Kloeden in \cite{HMS2}:
\begin{align}
\label{tamed_euler}
\overline{X}_{t_{k+1}} = \overline{X}_{t_{k}}
  + \frac{1}{ 1 + | a(\overline{X}_{t_{k}})| \Delta} a(\overline{X}_{t_{k}}) \Delta +  \sum_{j=1}^m b_j(\overline{X}_{t_{k}}) \Delta_k W^{(j)}.
 \end{align}

Here the drift-term is ``tamed" by the factor
$  \frac{1}{1+  | a(\overline{X}_{t_k}) | \Delta} $ 
in the $k$-th step, which prevents a possible explosion of the scheme. 

\smallskip

\begin{theorem}\label{euler_tamed_thm} Let the assumptions of Theorem \ref{euler_sses_bes_thm} hold.
Then, there exists a constant $K>0$ such that the tamed Euler scheme given by \eqref{tamed_euler}  satisfies
$$   \mathbf{E}  \max_{k=0, \ldots, n} |X_{t_k}- \X_{t_k}^n |^2 \leq K \cdot n^{-1}. $$
\end{theorem}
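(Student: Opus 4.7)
The plan is to follow the strategy used for the drift-implicit Euler schemes in Theorem \ref{euler_sses_bes_thm}, but with the taming factor playing the role that implicitness played there. Set $a_\Delta(x) := a(x)/(1+|a(x)|\Delta)$. Two elementary identities drive the entire proof. First, $|a_\Delta(x)|\Delta \le 1$ uniformly in $x$, so each drift increment of the scheme has Euclidean length at most $1$, which structurally prevents the double-exponential blow-up illustrated after \eqref{exeuler2}. Second,
$$|a(x) - a_\Delta(x)| = \frac{|a(x)|^2 \Delta}{1+|a(x)|\Delta} \le \min\bigl(|a(x)|, |a(x)|^2\Delta\bigr),$$
so replacing $a$ by $a_\Delta$ is a genuine $O(\Delta)$ perturbation whose size is quantitatively controlled by $|a(x)|^2$, and hence by a power of $|x|$ via the polynomial-Lipschitz hypothesis of Theorem \ref{euler_sses_bes_thm}.

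First I would establish uniform $L^p$-moment bounds
$$\sup_{n\in\mathbb{N}} \mathbf{E}\max_{k=0,\ldots,n}|\overline{X}_{t_k}^n|^p < \infty \quad \text{for every } p\ge 2.$$
Squaring the recursion and taking conditional expectations gives
$$\mathbf{E}\bigl[|\overline{X}_{t_{k+1}}^n|^2 \mid \mathcal{F}_{t_k}\bigr] \le |\overline{X}_{t_k}^n|^2 + 2\langle \overline{X}_{t_k}^n, a_\Delta(\overline{X}_{t_k}^n)\rangle\Delta + |a_\Delta(\overline{X}_{t_k}^n)|^2\Delta^2 + \sum_{j=1}^m |b_j(\overline{X}_{t_k}^n)|^2\Delta.$$
The inner product is bounded by $(2L_1+1)|\overline{X}_{t_k}^n|^2\Delta + C\Delta$ via the one-sided Lipschitz condition of Theorem \ref{euler_sses_thm} applied at $y=0$; the drift-squared term is at most $\Delta$ thanks to $|a_\Delta|\Delta \le 1$; and the diffusion term is at most $C(1+|\overline{X}_{t_k}^n|^2)\Delta$ by global Lipschitzness. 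A discrete Gronwall inequality then yields the $L^2$ bound uniform in $n$, and higher $p$ follow by the same expansion applied to $|\overline{X}_{t_{k+1}}^n|^{2p}$ together with $\mathbf{E}|\Delta_k W|^{2p} \le C_p\Delta^p$.

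Next I would set up the error analysis. Define the continuous-time interpolant
$$\overline{X}_t^n := \overline{X}_{t_k}^n + a_\Delta(\overline{X}_{t_k}^n)(t-t_k) + \sum_{j=1}^m b_j(\overline{X}_{t_k}^n)\bigl(W_t^{(j)}-W_{t_k}^{(j)}\bigr), \quad t\in[t_k,t_{k+1}],$$
and set $e_t := X_t - \overline{X}_t^n$. Itô's formula applied to $|e_t|^2$ produces a drift pairing $2\langle e_t, a(X_t) - a_\Delta(\overline{X}_{t_k}^n)\rangle$, a squared-diffusion term $\sum_j |b_j(X_t) - b_j(\overline{X}_{t_k}^n)|^2$, and a martingale. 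The crucial decomposition is
$$a(X_t) - a_\Delta(\overline{X}_{t_k}^n) = \bigl[a(X_t) - a(\overline{X}_t^n)\bigr] + \bigl[a(\overline{X}_t^n) - a(\overline{X}_{t_k}^n)\bigr] + \bigl[a(\overline{X}_{t_k}^n) - a_\Delta(\overline{X}_{t_k}^n)\bigr].$$
The first bracket pairs with $e_t$ to give $L_1|e_t|^2$ by one-sided Lipschitz; the second contributes $O(\Delta)$ in expectation via polynomial-Lipschitz together with the one-step estimate $\mathbf{E}|\overline{X}_t^n - \overline{X}_{t_k}^n|^{2p}\le C_p\Delta^p$ (which uses $|a_\Delta|\Delta\le 1$ and Burkholder--Davis--Gundy); the third is $O(\Delta)$ via $|a-a_\Delta|\le |a|^2\Delta$ combined with the moment bounds of step one. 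The diffusion square term is handled analogously, and the martingale by Burkholder--Davis--Gundy. Gronwall's inequality then yields $\mathbf{E}\sup_{t\in[0,T]}|e_t|^2 \le K/n$, which proves the theorem.

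The main obstacle is step one: without uniform polynomial $L^p$-bounds on $\overline{X}_{t_k}^n$, the perturbation size $|a(x)|^2\Delta$ cannot be absorbed in expectation and the entire error argument collapses. Unlike the implicit schemes in Theorem \ref{euler_sses_bes_thm}, where moment control flows from the dissipativity of the monotone fixed-point equation $h_\Delta(x)=x+a(h_\Delta(x))\Delta$, here it must be extracted directly from the interplay of three ingredients: the one-sided Lipschitz dissipativity of $a$, the hard cap $|a_\Delta|\Delta\le 1$ that rules out the blow-up mechanism of Theorem \ref{ref_euler_mom_exp}, and the linear growth of the diffusion. Propagating this uniformly across all moment orders and all $n$ is the delicate technical core.
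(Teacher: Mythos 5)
Your overall architecture (uniform polynomial moment bounds for the tamed scheme, then a one-sided-Lipschitz error analysis with a continuous interpolant, Burkholder--Davis--Gundy and Gronwall) has the right shape, and your error-analysis step would indeed go through once the moment bounds are available. The genuine gap is in your step one, exactly where you yourself locate the ``delicate technical core''. In the one-step expansion the drift-squared term is \emph{not} bounded by $\Delta$: from $|a_\Delta(x)|\Delta\le 1$ you only obtain $|a_\Delta(x)|^2\Delta^2\le \min\bigl(1,|a(x)|^2\Delta^2\bigr)\le |a(x)|\Delta$, which is of order one (not of order $\Delta$) whenever $|a(\overline{X}^n_{t_k})|\gtrsim \Delta^{-1}$, and otherwise is only bounded by $C(1+|\overline{X}^n_{t_k}|^{q+1})\Delta$, a superquadratic quantity. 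Consequently the conditional recursion for $\mathbf{E}|\overline{X}^n_{t_k}|^2$ does not close: it requires the $(q+1)$-st moment, whose analogous recursion requires a still higher moment, and the Gronwall hierarchy never terminates. (Your dissipativity bound $\langle x,a_\Delta(x)\rangle\le \max\bigl(0,\langle x,a(x)\rangle\bigr)\le C(1+|x|^2)$ is correct, but under a mere one-sided Lipschitz condition there is no negativity available to absorb the quadratic drift term.) This is not a technicality; it is precisely why the tamed scheme, unlike the implicit schemes of Theorem \ref{euler_sses_bes_thm}, is not amenable to the naive squaring-plus-conditional-Gronwall moment argument.

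The proof in the cited source \cite{HMS2} --- and this is what the sentence following the theorem in the survey alludes to --- handles the moments by a rare-event/domination argument instead: one introduces subevents on which the scheme and the Brownian increments have stayed below suitable $\Delta$-dependent thresholds, shows that on these subevents the tamed scheme is dominated by an auxiliary process possessing (exponential-type) moments of all orders, and controls the contribution of the complementary events by combining the super-polynomially fast decay of their probabilities with the at most exponential (in $n$, of size roughly a power of $\Delta^{-1}$ per step) worst-case growth of the tamed scheme; your observation $|a_\Delta|\Delta\le 1$ enters there, but only to bound this worst-case growth, not to close a Gronwall recursion. If step one is replaced by such an argument, the rest of your outline can stand essentially as written.
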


\smallskip

Here,  the difficulty is again to control the moments of the approximation scheme. For this appropriate processes are used that dominate the tamed Euler scheme on subevents whose probabilities
 converge sufficiently fast to one.
\medskip

The Theorems given so far in this section  require the diffusion coefficient to be globally Lipschitz, which is often not fulfilled in SDEs arising from mathematical finance.
 However, the backward Euler method can be also  successfully applied to the Ait-Sahalia interest rate model
\begin{align}\label{as_sde}
dX_t =  \big(\alpha_{-1} X_t^{-1} - \alpha_0 + \alpha_1 X_t - \alpha_2X_t^r \big)dt +\sigma X_t^{\rho} dW_t
\end{align}
where  $ \alpha_{i}, \sigma > 0$, $i=-1, \ldots, 2$ and $ r, \rho > 1$. 
The following result has been obtained by Szpruch et al.  in \cite{SMHJ}:

\smallskip

\begin{theorem}
Consider the SDE \eqref{as_sde}
and assume that $$r+1 >2 \rho.$$ Then the corresponding backward Euler method \eqref{bes} with stepsize $\Delta=T/n$  is well defined if  $\Delta \leq 1/\alpha_1$,  and  
$$ \lim_{n \rightarrow \infty}  \mathbf{E}  \max_{k=0, \ldots, n} |X_{t_k}- \X_{t_k}^n |^2=0. $$
\end{theorem}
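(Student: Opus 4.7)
The plan has four stages: (i) well-posedness and positivity of the implicit recursion, (ii) uniform-in-$n$ $p$-th moment bounds for both $X$ and $\X^n$ under the hypothesis $r+1>2\rho$, (iii) almost-sure pathwise convergence via a localization argument, and (iv) passage from pathwise to mean-square convergence using uniform integrability.

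For (i), the implicit step at index $k$ amounts to solving $F(y)=c_k$ for $y>0$, where
\[ F(y) = y - \Delta a(y) = (1-\alpha_1\Delta)y + \alpha_2\Delta y^r - \alpha_{-1}\Delta y^{-1} + \alpha_0\Delta \]
and $c_k = \X_{t_k} + \sigma \X_{t_k}^{\rho}\Delta_k W$. When $\Delta\leq 1/\alpha_1$, one has
\[ F'(y) = (1-\alpha_1\Delta) + \alpha_{-1}\Delta y^{-2} + r\alpha_2\Delta y^{r-1} > 0 \]
for all $y>0$, and $F(0^+)=-\infty$, $F(+\infty)=+\infty$. Thus $F:(0,\infty)\to\mathbb{R}$ is an increasing bijection, the iterate $\X_{t_{k+1}}$ exists uniquely in $(0,\infty)$, and the scheme is positivity-preserving starting from $x_0>0$.

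For (ii), It\^o's formula applied to $X_t^p$ produces the competing terms $-p\alpha_2 X_t^{p+r-1}$ from the dissipative drift and $\tfrac{p(p-1)}{2}\sigma^2 X_t^{p+2\rho-2}$ from the diffusion; the hypothesis $r+1>2\rho$ ensures that the former dominates at infinity, so a Gronwall estimate after stopping-time localization yields $\sup_{t\in[0,T]}\mathbf{E} X_t^p<\infty$ for every $p\geq 1$. For the scheme, I would multiply the defining identity $\X_{t_{k+1}}-\Delta a(\X_{t_{k+1}})=\X_{t_k}+\sigma\X_{t_k}^{\rho}\Delta_k W$ by $\X_{t_{k+1}}^{p-1}$, which exposes the ``hidden'' dissipation $\alpha_2\Delta\X_{t_{k+1}}^{p+r-1}$ on the left-hand side. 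Applying Young's inequality to $\X_{t_k}\X_{t_{k+1}}^{p-1}$ and using $r+1>2\rho$ to absorb the diffusion contribution $\sigma^2\X_{t_k}^{2\rho}\Delta$ that arises from the conditional expectation of the stochastic cross term, one obtains a recursion of the type $\mathbf{E}\X_{t_{k+1}}^p\leq(1+C\Delta)\mathbf{E}\X_{t_k}^p+C\Delta$, which iterates to $\sup_n \mathbf{E}\max_k(\X_{t_k}^n)^p<\infty$.

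For (iii)--(iv), since $a,b\in C^{\infty}((0,\infty);\mathbb{R})$ and both $X_t>0$ and $\X_{t_k}^n>0$ almost surely, the localization technique behind Theorem \ref{taylor_mod} adapts to the drift-implicit case: on the random event that $X$ and $\X^n$ remain in a compact interval $[\varepsilon(\omega),M(\omega)]\subset(0,\infty)$, the coefficients coincide with smooth, globally Lipschitz ones with bounded derivatives, for which the classical error analysis of the backward Euler gives $\sup_k|X_{t_k}-\X_{t_k}^n|\to 0$ almost surely. The $p$-th moment bound from (ii) with $p>2$ then yields uniform integrability of $\max_k|X_{t_k}-\X_{t_k}^n|^2$, and Vitali's theorem upgrades the almost-sure convergence to the stated mean-square convergence. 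The principal obstacle is (ii): for the explicit Euler the moments blow up under super-linear drift (Theorem \ref{ref_euler_mom_exp}), so the implicitness of \eqref{bes} must be exploited carefully to extract the dissipation $-\alpha_2\Delta\X_{t_{k+1}}^{r+1}$, and this dissipation must still dominate $\sigma^2\X_{t_k}^{2\rho}\Delta$ after Young-type decoupling of $\X_{t_{k+1}}$ from $\X_{t_k}$. The condition $r+1>2\rho$ is exactly what makes this balance succeed, both at the level of It\^o's formula for $X$ and at the level of the implicit identity for $\X^n$.
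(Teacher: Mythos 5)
First, note what the paper actually does with this statement: it does not prove it, but quotes it from Szpruch et al.\ \cite{SMHJ}, recording only the structural facts behind well-posedness -- the drift is one-sided Lipschitz on $(0,\infty)$ with constant $\alpha_1$ and $-a$ is coercive at $0$ and $\infty$ -- which guarantee that \eqref{bes} is solvable and strictly positive. Your stage (i) reproduces exactly this (your $F$ is an increasing bijection of $(0,\infty)$ onto $\R$ for $\Delta\leq 1/\alpha_1$), so that part is correct and matches the paper. For the convergence itself you take a route (pathwise convergence plus uniform integrability, in the spirit of Theorem \ref{euler_hms_thm}) that differs from the cited proof, which follows the stopping-time argument of Higham--Mao--Stuart \cite{HMS}: moment bounds for the exact solution \emph{and inverse-moment bounds plus second-moment bounds for the scheme}, an error estimate on the event that both processes stay in a compact subset $[1/R,R]\subset(0,\infty)$, and a Chebyshev/Young control of the complementary event. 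Your alternative is legitimate in principle, but as written it has genuine gaps.

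The principal gap is your stage (ii) for the scheme with $p>2$, which is exactly the ingredient your stages (iii)--(iv) need (uniform integrability of $\max_k|X_{t_k}-\X_{t_k}^n|^2$ requires $\sup_n\mathbf{E}\max_k(\X^n_{t_k})^p<\infty$ for some $p>2$ -- note also: a moment of the maximum, not the maximum of the moments, so an extra martingale/BDG step is needed even after the recursion). Your treatment of the stochastic cross term is incorrect as stated: after multiplying by $\X_{t_{k+1}}^{p-1}$, the term $\X_{t_{k+1}}^{p-1}\sigma\X_{t_k}^{\rho}\Delta_k W$ does \emph{not} have conditional expectation of order $\sigma^2\X_{t_k}^{2\rho}\Delta$, because $\X_{t_{k+1}}$ depends on $\Delta_k W$. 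The clean decoupling $\X_{t_{k+1}}^{p-1}u\leq\frac{p-1}{p}\X_{t_{k+1}}^p+\frac1p|u|^p$ with $u=\X_{t_k}+\sigma\X_{t_k}^{\rho}\Delta_k W$ works, and for $p=2$ it yields precisely the term $\sigma^2\Delta\,\mathbf{E}\X_{t_k}^{2\rho}$ which the dissipation $\alpha_2\Delta\,\mathbf{E}\X_{t_{k+1}}^{r+1}$ absorbs after summation, using $r+1>2\rho$. But for $p>2$ the expansion of $\mathbf{E}|u|^p$ produces terms of the form $\Delta^{j/2}\mathbf{E}\X_{t_k}^{\,p+j(\rho-1)}$, and for large $j$ (hence large $p$) the exponent $p+j(\rho-1)$ exceeds the dissipation exponent $p+r-1$, so the balance no longer follows from $r+1>2\rho$ alone; one must exploit the extra powers of $\Delta$ or restrict the range of $p$, and this is precisely the hard technical step that your sketch asserts rather than carries out. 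A secondary gap is stage (iii): Theorem \ref{taylor_mod} and its localization are proved for \emph{explicit} modified It\^o--Taylor schemes, and transferring them to the implicit scheme requires an argument that truncating $a$ outside $[\varepsilon,M]\subset(0,\infty)$ does not change the implicit step while the iterates remain there (uniqueness of the solutions of both implicit equations, and the back-and-forth identification for $n$ large), together with control of the scheme near the singularity at $0$ uniformly in $n$; the cited proof handles the latter through inverse moments of the scheme, an ingredient missing from your plan. These gaps are fillable, but they are where the real work of the theorem lies.
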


\smallskip
Here the drift coefficient is still one-sided Lipschitz on the domain of the SDE, i.e.  
$$  (x-y) (a(x)-a(y)) \leq \alpha_1 |x-y|^2, \qquad x,y>0,
$$
and, moreover, $-a$ is  coercive on $(0, \infty)$, i.e.
$$ \lim_{x \rightarrow 0} a(x)= \infty \qquad \quad \lim_{x \rightarrow \infty} a(x)= -\infty.$$
These two properties ensure that the drift-implicit Euler scheme  for \eqref{as_sde} is well-defined and, in particular,  takes  only strictly positive values.

\smallskip
The drift coefficient in the volatility process 
$$ dV_t =  c_1 V_t( c_2 -  V_t) \, dt +  c_3 V_t^{3/2}  \, dW_t,  \quad V_0=v_0>  0$$
in the 3/2-model is  also one-sided Lipschitz on $(0, \infty)$. It does not,  however,  satisfy the coercivity assumption. Consequently, the drift-implicit Euler scheme cannot be applied here, since the implicit equation may not be solvable.
Note that very recently, Higham et al. introduced in \cite{mils_szp} a double-implicit Milstein scheme, which is strongly convergent for the
3/2-model and similar SDEs.

\bigskip
\bigskip

\section{Strong Convergence  Rates for the approximation of the Cox-Ingersoll-Ross process and the Heston model}

Strong convergence rates for the approximation of the CIR process
$$dX_t = \kappa (\lambda - X_t) \, dt + \theta \sqrt{X_t} \, dW_t, \quad t \in [0,T], \qquad X_0=x_0>0 $$
with $\kappa, \lambda, \theta >0$ have been a long standing open problem, even
in  the regime where the CIR process does not hit zero, i.e. when 
 $2 \kappa \lambda \geq \theta^2$.
 
The first non-logarithmic  rates were derived by Berkaoui, Bossy \& Diop for the symmetrized Euler scheme \eqref{euler_1}, i.e.
\begin{align*}
\X_{t_{k+1}} &= \Big | \X_{t_k} +  \kappa( \lambda
  - \X_{t_k} )  \Delta  + \theta  \sqrt{
     \X_{t_k}} \, \Delta_k W \Big| . 
\end{align*} 
They  showed in \cite{diop_2} that 
$$\EX \max_{k=0, \ldots, n} |X_{t_k} - \X_{t_k}|^{2p} \leq C_p\cdot \Delta^p$$
under the assumption
\begin{align*}   \frac{2 \kappa \lambda}{\theta^2} >  1+   \sqrt{8} \, \max \left \{  \frac{\sqrt{\kappa}}{\theta} \sqrt{16p-1},  16p-2 \right \}, \end{align*}
where the constant $C_p>0$ depends only on $p, \kappa,
  \lambda, \theta, x_0$ and $T$. Strong convergence rates for a drift-implicit Euler-type scheme  were  recently obtained under mild assumptions by Dereich, Neuenkirch \& Szpruch in \cite{dns}.
 Their key tool is the use of the  Lamperti-transformation:  by the  It\^o formula, the  transformed process $Y_t = \sqrt{X}_t$ satisfies the SDE
 \begin{align} \label{sqr_X}
dY_t = \frac{\alpha}{Y_t} \,dt + \beta Y_t \, dt  + \gamma \, dW_t, \quad t \geq 0,
	\qquad   Y_0=\sqrt{x_0}  \end{align}
with
$$ \alpha = \frac{4 \kappa \lambda - \theta^2}{8}, \qquad \beta = - \frac{\kappa}{2}, \qquad \gamma = \frac{\theta}{2}.$$
 At first glance this transformation
does not help at all, since the drift coefficient of the arising SDE is singular. However,
$$ a(x)=   \frac{\alpha}{x} + \beta x, \qquad x > 0, $$ satisfies 
for  $\alpha >0$, $\beta \in \mathbb{R}$ the restricted one-sided  Lipschitz condition
\begin{align*} 
 (x-y)(a(x)-a(y)) \leq \beta (x-y)^2, \qquad x,y > 0
 \end{align*}
The  drift-implicit Euler method with stepsize $\Delta>0$ in this case is  \begin{align*} 
 \Y_{t_{k+1}} = \Y_{t_{k}} +  \left( \frac{\alpha}{\Y_{t_{k+1}}} + \beta \Y_{t_{k+1}} \right) \Delta + \gamma \Delta_k W , \qquad k=0,1, \ldots \end{align*}  
with $\overline{Y}_0 = \sqrt{x_0}$, which   has the explicit solution
\begin{align*} 
\Y_{t_{k+1}}=  \frac{\Y_{t_{k}} + \gamma \Delta_k W}{2(1- \beta \Delta)} +  \sqrt{ \frac{(\Y_{t_{k}} + \gamma \Delta_k W)^2}{4(1- \beta \Delta)^2} + \frac{\alpha \Delta}{1- \beta \Delta} }.
\end{align*}
Setting \begin{align} \label{sqr_euler} \X_{t_k} = \Y^2_{t_k}, \qquad k=0,1, \ldots  , \end{align}
gives a positivity preserving approximation of the CIR process,  which  is called drift-implicit square-root Euler method.
This scheme had already been proposed in \cite{alfonsi}, but without  a convergence analysis.
Piecewise linear interpolation, i.e.
\begin{align*} 
\overline{X}_t=\frac{t_{k+1}-t}{\Delta}\X_{t_k}+ \frac{t -t_k}{\Delta}\X_{t_{k+1}}, \qquad t \in [t_k,t_{k+1}], \end{align*}
 gives a global approximation $(\overline{X}_t)_{t \in [0,T]}$ of the CIR process on $[0,T]$.
The main result of \cite{dns} is:
\smallskip
\begin{theorem} \label{mainthm_sqr_euler}
Let $2\kappa \lambda > \theta^2$, $x_0>0$ and $T>0$. Then, for all $$ 1 \leq p < \frac{2 \kappa \lambda}{\theta^2} $$ there exists a constant $K_p >0$   such that
\begin{align*}\left( \EX \max_{t \in [0,T]} |X_{t} - \overline{X}_t |^p  \right)^{1/p} \leq K_p \cdot \sqrt{\log(\Delta)|} \cdot \sqrt{\Delta}, \end{align*}
for all  $\Delta \in (0, 1/2]$.
\end{theorem}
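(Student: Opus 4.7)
The plan is to work with the Lamperti-transformed process $Y_t = \sqrt{X_t}$ satisfying \eqref{sqr_X}, exploit the identity
$$ X_{t_k} - \overline{X}_{t_k} = (Y_{t_k} - \overline{Y}_{t_k}) \cdot (Y_{t_k} + \overline{Y}_{t_k}),$$
and reduce everything to an error analysis on the $Y$-level. By H\"older's inequality with conjugate exponents $r, r'$ (to be chosen close to $1$ and $\infty$ respectively), one gets
$$ \EX \max_k |X_{t_k} - \overline{X}_{t_k}|^p \leq \big( \EX \max_k |Y_{t_k} - \overline{Y}_{t_k}|^{pr}\big)^{1/r} \cdot \big( \EX \max_k |Y_{t_k} + \overline{Y}_{t_k}|^{pr'}\big)^{1/r'}.$$
The second factor is uniformly bounded by classical moment estimates for the CIR process together with an elementary $L^q$-bound for the drift-implicit square-root scheme (which follows from the explicit formula for $\overline{Y}_{t_{k+1}}$). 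The whole analysis is therefore pushed onto the transformed scheme.

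On the $Y$-level, the drift $a(y) = \alpha/y + \beta y$ enjoys the one-sided Lipschitz estimate $(y-z)(a(y)-a(z)) \leq \beta (y-z)^2$ for all $y,z > 0$, because the term $\alpha(1/y - 1/z)$ contributes $-\alpha(y-z)^2/(yz) \leq 0$. Writing the drift-implicit recursion as
$$ \overline{Y}_{t_{k+1}} - Y_{t_{k+1}} = (\overline{Y}_{t_k} - Y_{t_k}) + \Delta\, (a(\overline{Y}_{t_{k+1}}) - a(Y_{t_{k+1}})) + R_k,$$
with local residual
$$ R_k = \int_{t_k}^{t_{k+1}} (a(Y_{t_{k+1}}) - a(Y_s))\, ds,$$
multiplying by $\overline{Y}_{t_{k+1}} - Y_{t_{k+1}}$ and using monotonicity absorbs the implicit drift term, producing a discrete Gr\"onwall inequality of the form
$$ |\overline{Y}_{t_{k+1}} - Y_{t_{k+1}}|^2 \leq (1 + C\Delta)\, |\overline{Y}_{t_k} - Y_{t_k}|^2 + C \cdot |R_k|^2,$$
plus a martingale remainder controlled via Burkholder--Davis--Gundy. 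Iterating and taking maxima then reduces the problem to estimating $\sum_k |R_k|^2$ in $L^{p/2}$.

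The main obstacle is controlling $R_k$, since an It\^o expansion yields
$$ R_k = \int_{t_k}^{t_{k+1}} \int_s^{t_{k+1}} \Big( a'(Y_u) a(Y_u) + \tfrac{\gamma^2}{2} a''(Y_u) \Big) du\, ds + \gamma \int_{t_k}^{t_{k+1}} \int_s^{t_{k+1}} a'(Y_u) \, dW_u\, ds,$$
with $a'(y) = -\alpha/y^2 + \beta$ and $a''(y) = 2\alpha/y^3$. These quantities are singular at the origin and require inverse-moment bounds of the form $\EX Y_t^{-q}$ and $\EX \overline{Y}_{t_k}^{-q}$ uniformly in $t$, $k$ and $\Delta$. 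For the CIR process the sharp inverse-moment threshold is governed by the Feller quantity $2\kappa\lambda/\theta^2$, which is precisely where the restriction $p < 2\kappa\lambda/\theta^2$ enters the statement. The explicit square-root representation \eqref{sqr_euler} makes matching inverse-moment bounds for $\overline{Y}_{t_k}$ accessible: one can compare $\overline{Y}_{t_{k+1}}$ with the analogous quantity obtained from the exact flow, or exploit the fact that $\overline{Y}_{t_{k+1}} \geq \sqrt{\alpha \Delta / (1 - \beta \Delta)}$ together with a direct tail estimate of the conditional density. The logarithmic factor $\sqrt{|\log \Delta|}$ arises because the bound on $\EX Y_s^{-q}$ just barely diverges as $q$ approaches the critical value $2\kappa\lambda/\theta^2 - 1$, so one picks an exponent depending on $\Delta$ and optimises, paying a $\log$ at the boundary.

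Finally, to pass from the grid max to the $L^\infty([0,T])$-norm, one invokes BDG on the SDE increments over $[t_k, t]$ combined with the piecewise-linear interpolation of the scheme; the additional contribution is again of order $\sqrt{\Delta}$ (up to logarithmic factors coming from maximising over $n$ subintervals). Combining this with the grid-point estimate and the H\"older splitting of the first paragraph yields the bound $K_p \cdot \sqrt{|\log \Delta|} \cdot \sqrt{\Delta}$ for all $1 \leq p < 2\kappa\lambda/\theta^2$, with the dependence of $K_p$ on $p$ blowing up as $p \uparrow 2\kappa\lambda/\theta^2$, reflecting the loss of the controlling inverse moment.
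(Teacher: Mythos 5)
Two prefatory remarks. First, the paper does not actually prove Theorem \ref{mainthm_sqr_euler}: it quotes it from \cite{dns}, recording only the surrounding ideas (the Lamperti transform \eqref{sqr_X}, the one-sided Lipschitz property of the transformed drift, the explicit positive solution of the implicit scheme, and the remark that the restriction on $p$ comes from inverse moments of the CIR process). Your outline follows exactly this route, so in strategy you are aligned with the intended proof: after the transform the noise is additive, so the Brownian increments cancel in the error recursion, the implicit drift difference is absorbed by monotonicity, and everything reduces to the residual $R_k$ plus the transfer back via $X-\overline{X}=(Y-\overline{Y})(Y+\overline{Y})$ and H\"older. Second, some local corrections: there is no ``martingale remainder'' in the recursion itself (BDG enters only inside your expansion of $R_k$); absorbing $e_{k+1}R_k$ by Young's inequality yields $C\Delta^{-1}|R_k|^2$, not $C|R_k|^2$ (the rate survives, but the displayed recursion is wrong as written); and the bound $\overline{Y}_{t_{k+1}}\geq\sqrt{\alpha\Delta/(1-\beta\Delta)}$ is false whenever $\overline{Y}_{t_k}+\gamma\Delta_k W<0$ --- fortunately inverse moments of the scheme are never needed in your own decomposition, since monotonicity removes the scheme-dependent drift term and $R_k$ involves only the exact solution.

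The genuine gap is the estimate of the residual on the whole claimed range $1\leq p<2\kappa\lambda/\theta^2$. In your It\^o expansion the Lebesgue part contains $a'(Y_u)a(Y_u)+\tfrac{\gamma^2}{2}a''(Y_u)$, which behaves like $Y_u^{-3}=X_u^{-3/2}$ near the origin; bounding its $L^p$-norm through pointwise inverse moments requires $\mathbf{E}\, X_u^{-3p/2}<\infty$, i.e. $p<\tfrac{4\kappa\lambda}{3\theta^2}$, strictly smaller than the asserted range. Writing instead $a(Y_{t_{k+1}})-a(Y_s)=\alpha\,(Y_s-Y_{t_{k+1}})/(Y_sY_{t_{k+1}})+\beta\,(Y_{t_{k+1}}-Y_s)$ and choosing H\"older exponents close to $1$ hits the same obstruction through the drift part of the increment, which again carries a factor $Y^{-1}$. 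No $\Delta$-dependent choice of exponents can repair an infinite moment, which also shows that your explanation of the logarithm is off target: for each fixed $p$ below the threshold the relevant inverse moments are finite with fixed constants, and the factor $\sqrt{|\log\Delta|}$ is naturally tied to the uniform-in-time criterion (maximum over the $\sim\Delta^{-1}$ subintervals in the interpolation step), consistent with the optimality remark following the theorem. To reach all $p<2\kappa\lambda/\theta^2$ one must exploit that, after summing the residuals, the singular contributions appear as time-integrated functionals such as $\Delta\int_0^T X_u^{-3/2}\,du$, which are far more integrable than their pointwise counterparts; controlling such functionals of the CIR process is the technical core of \cite{dns} and is absent from your sketch. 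As written, your argument can at best deliver the statement for $p<\tfrac{4\kappa\lambda}{3\theta^2}$.
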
 

\smallskip

The restriction on $p$ arises in the proof of the convergence rate when controlling the inverse $p$-th moments of the CIR process, which are infinite for $p \geq 2 \kappa \lambda /\theta^2$. For further details, see \cite{dns}.
Note that  for SDEs with Lipschitz coefficients the  convergence rate $\sqrt{| \log(\Delta)|} \cdot \sqrt{\Delta}$   is best possible  with respect to the above global error criterion, see \cite{tmg}. So the convergence rate given in Theorem \ref{mainthm_sqr_euler}  matches the rate that is optimal under standard assumptions.

\smallskip
Other approximation schemes for the strong approximation of the CIR process can be found in
\cite{alfonsi,jk,hal}. Among them is the   drift-implicit Milstein scheme
  \begin{align*} 
\Z_{t_{k+1}} =\Z_{t_{k}} & +    \kappa( \lambda -\Z_{t_{k+1}}) \Delta +            \theta \sqrt{\Z_{t_{k}}} \Delta_k W + \frac{\theta^2}{4} \big( (\Delta_k W)^2 - \Delta \big) 
\end{align*} 
with $Z_0=x_0$, see \cite{jk}.  It can  be rewritten as
\begin{align} \label{dimp_niceform}
\Z_{t_{k+1}} = \frac{1}{1+\kappa \Delta } \left( \sqrt{\Z_{t_k}} +   \frac{\theta}{2} \Delta_k W \right)^2 +   \frac{1}{1+\kappa \Delta } \left( \kappa \lambda - \frac{\theta^2}{4} \right) \Delta,       
\end{align} 
so this scheme preserves the positivity of the CIR process
if $4\kappa \lambda \geq \theta^2$. It coincides up to a term of second order  with the drift-implicit square-root Euler method, since the latter can be written as
\begin{align*} 
 \X_{t_{k+1}}  = & \frac{1}{1+\kappa \Delta } \left( \sqrt{\X_{t_{k}}} +   \frac{\theta}{2} \Delta_k W \right)^2 +   \frac{1}{1+\kappa \Delta } \left( \kappa \lambda - \frac{\theta^2}{4} \right) \Delta \\
 & \qquad -  \frac{1}{1+\kappa \Delta} \left(  \frac{4 \kappa \lambda - \theta^2}{8 \sqrt{\X_{t_{k+1}}}} - \frac{\kappa}{2} \sqrt{\X_{t_{k+1}}} \right)^2 \Delta^2.
\end{align*}
Moreover the drift-implicit Milstein scheme
 dominates  the drift-implicit square-root Euler method:
\begin{lemma}\label{dom_Z} Let $2\kappa \lambda > \theta^2$, $x_0>0$ and $T>0$.
Then 
$$ \mathbf{P} (\Z_{t_k} \geq \X_{t_k}, \,\, k=0,1, \ldots ) =1. $$
\end{lemma}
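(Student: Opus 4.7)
The plan is induction on $k$, with the base case $\Z_0 = \X_0 = x_0$ being trivial. For the inductive step, assume $\Z_{t_k} \geq \X_{t_k} \geq 0$. Subtracting the representation \eqref{dimp_niceform} of $\Z_{t_{k+1}}$ from the displayed representation of $\X_{t_{k+1}}$ given just above the lemma (and multiplying through by $1 + \kappa\Delta$), I obtain
\begin{align*}
(1+\kappa\Delta)(\Z_{t_{k+1}} - \X_{t_{k+1}})
= \Big[\big(\sqrt{\Z_{t_k}} + \tfrac{\theta}{2}\Delta_k W\big)^{\!2} - \big(\sqrt{\X_{t_k}} + \tfrac{\theta}{2}\Delta_k W\big)^{\!2}\Big] + \Delta^{2}\, C_k^{2},
\end{align*}
where $C_k := \tfrac{4\kappa\lambda - \theta^{2}}{8\sqrt{\X_{t_{k+1}}}} - \tfrac{\kappa}{2}\sqrt{\X_{t_{k+1}}}$. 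The ``Milstein correction'' $\Delta^{2} C_k^{2}$ is always non-negative; it is the only tool available to compensate for a negative first bracket.

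The main obstacle is that the map $x \mapsto (\sqrt{x} + \tfrac{\theta}{2}\Delta_k W)^{2}$ fails to be monotone on $[0,\infty)$ when $\Delta_k W < 0$, so the first bracket may well have either sign even under the inductive hypothesis. I would therefore split according to the sign of $u := \sqrt{\X_{t_k}} + \tfrac{\theta}{2}\Delta_k W$. If $u \geq 0$, the inductive hypothesis yields $\sqrt{\Z_{t_k}} + \tfrac{\theta}{2}\Delta_k W \geq u \geq 0$; squaring preserves the ordering, the first bracket is non-negative, and adding $\Delta^{2} C_k^{2} \geq 0$ closes this case at once.

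The remaining case $u < 0$ is the delicate one. Here I would use the trivial bound $(\sqrt{\Z_{t_k}} + \tfrac{\theta}{2}\Delta_k W)^{2} \geq 0$ to conclude that the first bracket is at least $-u^{2}$, and then invoke the drift-implicit Euler identity for the Lamperti-transformed SDE \eqref{sqr_X} applied to $\bar{Y} = \sqrt{\X}$, which reads
$$
\sqrt{\X_{t_{k+1}}} - u \;=\; \Delta\, C_k .
$$
Since the assumption $2\kappa\lambda > \theta^{2}$ together with the closed form \eqref{sqr_euler} forces $\X_{t_{k+1}} > 0$, and since $u < 0$, this gives $\Delta\, C_k = \sqrt{\X_{t_{k+1}}} + |u| > |u| \geq 0$, hence $\Delta^{2} C_k^{2} > u^{2}$. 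Summing, the right-hand side of the displayed identity is non-negative and the induction step goes through, yielding $\Z_{t_k} \geq \X_{t_k}$ for every $k$ with probability one.
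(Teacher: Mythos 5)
Your proof is correct, but it takes a genuinely different route from the paper. The paper argues through the two numerical flows: writing $\Z_{t_{k+1}}=\varphi_{\overline{Z}}(\Z_{t_k};k,\Delta)$ and $\X_{t_{k+1}}=\varphi_{\overline{X}}(\X_{t_k};k,\Delta)$, it first observes that $\varphi_{\overline{Z}}(x;k,\Delta)\geq \varphi_{\overline{X}}(x;k,\Delta)$ for the \emph{same} input $x$ (since $\varphi_{\overline{X}}$ equals the Milstein expression minus the non-negative $\Delta^2$-correction), and then invokes the monotonicity of $\varphi_{\overline{X}}$ in $x$, cited from \cite{alfonsi}, to compare the chains; i.e.\ it decomposes through $\varphi_{\overline{X}}(\Z_{t_k})$, so the non-monotonicity of the Milstein flow never enters. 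You instead decompose through $\varphi_{\overline{Z}}(\X_{t_k})$, which forces you to face exactly that non-monotonicity: the map $x\mapsto(\sqrt{x}+\tfrac{\theta}{2}\Delta_kW)^2$ can decrease when $\Delta_kW<0$. Your resolution of the bad case $u=\sqrt{\X_{t_k}}+\tfrac{\theta}{2}\Delta_kW<0$ is sound: the implicit relation $\sqrt{\X_{t_{k+1}}}=u+\Delta C_k$ from the drift-implicit Euler step for \eqref{sqr_X}, together with $\X_{t_{k+1}}>0$ (which indeed follows from $\alpha=\tfrac{4\kappa\lambda-\theta^2}{8}>0$ and the closed form preceding \eqref{sqr_euler}), gives $\Delta C_k=\sqrt{\X_{t_{k+1}}}+|u|>|u|$, hence $\Delta^2C_k^2>u^2$, which dominates the worst-case deficit $-u^2$ of the first bracket. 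What each approach buys: the paper's version is shorter and cleaner once Alfonsi's monotonicity of the implicit square-root Euler flow is granted (it is in any case immediate from the explicit formula for $\Y_{t_{k+1}}$, since $v\mapsto \tfrac{v}{2(1-\beta\Delta)}+\sqrt{\tfrac{v^2}{4(1-\beta\Delta)^2}+\tfrac{\alpha\Delta}{1-\beta\Delta}}$ is increasing); your version is self-contained, avoids the external citation, and makes quantitatively explicit that it is precisely the Milstein correction term, via the implicit Euler identity, that rescues the comparison on the event where the quadratic map is decreasing.
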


\begin{proof}
The numerical flow for the drift-implicit Milstein scheme is given by
  \begin{align*} 
\varphi_{\overline{Z}}(x;k,\Delta)= \frac{1}{1+\kappa \Delta } \left( \sqrt{x} +   \frac{\theta}{2} \Delta_k W \right)^2 +   \frac{1}{1+\kappa \Delta }\left( \kappa \lambda - \frac{\theta^2}{4} \right) \Delta   
\end{align*} 
and  for the drift-implicit square-root Euler method   it  satisfies
 \begin{align*} 
 \varphi_{\overline{X}}(x;k,\Delta) & + \frac{1}{1+\kappa \Delta} \left(  \frac{4 \kappa \lambda - \theta^2}{8 \sqrt{\varphi_{\overline{X}}(x;k,\Delta) }} - \frac{\kappa}{2} \sqrt{ \varphi_{\overline{X}}(x;k,\Delta)} \right)^2 \Delta^2  \\ &  \qquad = \frac{1}{1+\kappa \Delta } \left( \sqrt{x} +   \frac{\theta}{2} \Delta_k W \right)^2 +   \frac{1}{1+\kappa \Delta } \left( \kappa \lambda - \frac{\theta^2}{4} \right) \Delta.
\end{align*} 
 From \cite{alfonsi} it is known that $\varphi_{\overline{X}}$ is monotone, i.e.
$$  \varphi_{\overline{X}}(x_1;k,\Delta) \geq   \varphi_{\overline{X}}(x_2;k,\Delta) $$ 
for $x_1 \geq x_2$. Thus it remains to show that
$$  \varphi_{\overline{Z}}(x;k,\Delta) \geq   \varphi_{\overline{X}}(x;k,\Delta) $$
for arbitrary $\Delta>0$, $k=0, 1, \ldots$, $x >0$. However,  this follows  directly  by  comparing 
both flows. 
\end{proof}

\smallskip

The above property allows one  to show the strong convergence  of the drift-implicit Milstein scheme, which seems  not to have  been established yet in the literature.

\smallskip

\begin{proposition}\label{conv_dimp_mil} Let $2\kappa \lambda > \theta^2$, $x_0>0$ and  $T>0$.  Then 
\begin{align*}
   \lim_{n \rightarrow \infty}  \mathbf{E}  \max_{k=0, \ldots, n} |X_{t_k}- \Z^n_{t_k} |^2=0. 
 \end{align*}
\end{proposition}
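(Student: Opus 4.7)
I would begin with the triangle inequality,
\begin{align*}
\mathbf{E}\max_{k}|X_{t_k} - \overline{Z}^n_{t_k}|^2 \leq 2\,\mathbf{E}\max_k|X_{t_k} - \overline{X}^n_{t_k}|^2 + 2\,\mathbf{E}\max_k|\overline{X}^n_{t_k} - \overline{Z}^n_{t_k}|^2,
\end{align*}
and then show that each summand on the right-hand side tends to zero as $n \to \infty$.

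For the first summand, Theorem \ref{mainthm_sqr_euler} provides $L^p$-convergence for every $p \in [1, 2\kappa\lambda/\theta^2)$, hence convergence in probability. Since under the sole hypothesis $2\kappa\lambda > \theta^2$ this range may fail to contain $p=2$, I would upgrade the convergence to $L^2$ via uniform integrability: the family $\{\max_k|X_{t_k}-\overline{X}^n_{t_k}|^2\}_n$ is uniformly integrable thanks to $\sup_{t\in[0,T]}X_t \in L^r$ for every $r \geq 1$ (standard for the CIR process) together with uniform-in-$n$ $L^r$-bounds on $\max_k \overline{X}^n_{t_k}$ for some $r > 2$, which are derivable directly from the explicit representation \eqref{sqr_euler} of the drift-implicit square-root Euler scheme.

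For the second summand, set $D_k := \overline{Z}^n_{t_k} - \overline{X}^n_{t_k} \geq 0$ (Lemma \ref{dom_Z}). The identities used in that lemma's proof yield
\begin{align*}
D_{k+1} = \frac{D_k + \theta\,\bigl(\sqrt{\overline{Z}^n_{t_k}} - \sqrt{\overline{X}^n_{t_k}}\bigr)\Delta_k W}{1 + \kappa\Delta} + R_k,
\end{align*}
with the non-negative remainder $R_k = \frac{\Delta^2}{1+\kappa\Delta}\bigl(\tfrac{4\kappa\lambda-\theta^2}{8\sqrt{\overline{X}^n_{t_{k+1}}}} - \tfrac{\kappa}{2}\sqrt{\overline{X}^n_{t_{k+1}}}\bigr)^2$. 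Unfolding the recursion gives
\begin{align*}
D_k = -\sum_{j=0}^{k-1}\frac{\kappa\Delta\, D_j}{1+\kappa\Delta} + M_k + \sum_{j=0}^{k-1}R_j,
\end{align*}
where $M_k = \sum_{j=0}^{k-1}\frac{\theta(\sqrt{\overline{Z}^n_{t_j}}-\sqrt{\overline{X}^n_{t_j}})}{1+\kappa\Delta}\Delta_j W$ is a square-integrable martingale whose predictable quadratic variation is bounded, using $(\sqrt{z}-\sqrt{x})^2 \leq z-x$ for $z \geq x \geq 0$, by $\theta^2\Delta\sum_j D_j$. A first-moment iteration of the recursion (exploiting $\mathbf{E}\Delta_k W = 0$) together with the estimate $R_k \leq C\Delta^2\bigl((\overline{X}^n_{t_{k+1}})^{-1} + \overline{X}^n_{t_{k+1}}\bigr)$ yields $\mathbf{E} D_k \leq C\Delta$ uniformly in $k$. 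Squaring the unfolded representation, invoking Cauchy--Schwarz, Doob's maximal inequality for $M$, and a discrete Gronwall argument then leads to $\mathbf{E}\max_k D_k^2 = \mathcal{O}(\Delta)$, completing the proof.

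The main technical obstacle is controlling $\mathbf{E} R_k$, which hinges on the uniform-in-$(n,k)$ boundedness of the inverse moment $\mathbf{E}[(\overline{X}^n_{t_k})^{-1}]$. This is exactly where the hypothesis $2\kappa\lambda > \theta^2$ enters: it is the CIR non-attainability condition, and the corresponding inverse-moment bound for the drift-implicit square-root Euler scheme applied to the Lamperti-transformed equation \eqref{sqr_X} is available from the analysis in \cite{dns}. Without such a bound, the $1/\sqrt{\overline{X}^n_{t_{k+1}}}$ singularity in $R_k$ could not be tamed.
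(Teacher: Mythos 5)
Your decomposition through $D_k=\overline{Z}^n_{t_k}-\overline{X}^n_{t_k}$ and the recursion you derive from \eqref{dimp_niceform} are correct, but the passage from the unfolded recursion to $\mathbf{E}\max_k D_k^2=\mathcal{O}(\Delta)$ does not close under the stated hypothesis. After squaring, controlling $\mathbf{E}\bigl(\sum_j R_j\bigr)^2$ via Cauchy--Schwarz requires second moments of $R_j$, i.e.\ uniform-in-$(n,k)$ bounds on $\mathbf{E}\bigl[(\overline{X}^n_{t_k})^{-2}\bigr]$ --- not merely the first inverse moment you acknowledge as the ``main technical obstacle''. Under the bare assumption $2\kappa\lambda>\theta^2$ such a bound cannot be expected: already the exact CIR process has $\mathbf{E}X_t^{-2}<\infty$ only when $2\kappa\lambda/\theta^2>2$. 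Moreover, \cite{dns} does not provide inverse moments of the scheme at all (not even the first one): the restriction $p<2\kappa\lambda/\theta^2$ there comes from inverse moments of the exact process, and the one-sided Lipschitz structure of the transformed drift is exactly what allows that analysis to avoid inverse moments of $\overline{Y}$. A secondary soft spot is the claim that $\sup_n\mathbf{E}\max_k|\overline{X}^n_{t_k}|^r<\infty$ for some $r>2$ is ``derivable directly'' from \eqref{sqr_euler}: the naive per-step estimate adds a term of order $\sqrt{\alpha\Delta}$ at every step, which summed over $n$ steps diverges like $\sqrt{n}$, so this bound needs a genuinely more careful argument (it is true, but it is not a one-line consequence of the explicit formula).

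The paper's proof is organized precisely to sidestep these issues. It writes the recursion for $e_k=X_{t_k}-\overline{Z}^n_{t_k}$ directly, expressing $X_{t_{k+1}}$ through the Milstein flow $\varphi_{\overline{Z}}(X_{t_k};k,\Delta)$ plus remainders $\rho_{k+1}$ that involve only the exact solution, so the singular correction $\bigl(\tfrac{4\kappa\lambda-\theta^2}{8\sqrt{\cdot}}-\tfrac{\kappa}{2}\sqrt{\cdot}\bigr)^2\Delta^2$ never appears and no inverse moments of any scheme are needed. Lemma \ref{dom_Z} is used only to reduce $\mathbf{E}\bigl|\sqrt{X_{t_k}}-\sqrt{\overline{Z}_{t_k}}\bigr|^2\le\mathbf{E}|X_{t_k}-\overline{Z}_{t_k}|$ to the $L^1$ error of $\overline{X}$ (Theorem \ref{mainthm_sqr_euler} with $p=1$, admissible for every $2\kappa\lambda>\theta^2$) plus an $\mathcal{O}(\Delta)$ bias obtained from the exact linear ODE satisfied by $\mathbf{E}X_t$; the only moment bound required of the scheme is $\sup_{n,k}\mathbf{E}\,\overline{Z}_{t_k}<\infty$, immediate from \eqref{dimp_niceform}. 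To salvage your route you would have to replace the $L^2$ estimate of $\sum_j R_j$ by an $L^1$ estimate plus a uniform-integrability argument and prove inverse-moment bounds for the scheme under a stronger parameter restriction, at which point the proposition's generality is lost.
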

\begin{proof} 

First note that the  drift-implicit square-root Euler method   can be rearranged as 
\begin{align*}
 X_{t_{k+1}} &= \varphi_{\overline{Z}}(X_{t_k};k, \Delta) - \kappa \int_{t_k}^{t_{k+1}} (X_t-X_{t_{k+1}})\,dt
 \\ & \qquad + \theta \int_{t_k}^{t_{k+1}} (\sqrt{X_t} - \sqrt{X_{t_k}}  )\,dW_t  - \frac{\theta^2}{4} ( \Delta_k W^2 - \Delta)  \end{align*}
where $\varphi_{\overline{Z}}$ is the numerical flow of the drift-implicit Milstein scheme defined
in the proof of the above Lemma.
Thus the  error $e_k=X_{t_k}-\Z_{t_k}$ satisfies the recursion
\begin{align} \label{rec_1}
 e_{k+1} &=  e_k - \kappa e_{k+1} \Delta   + \theta \Big(\sqrt{X_{t_k}}- \sqrt{\Z_{t_k}} \Big) \Delta_k W + \rho_{k+1}
 \end{align}
 with $e_0=0$, where 
$$ \rho_{k+1} = -\kappa \int_{t_k}^{t_{k+1}} (X_s-X_{t_{k+1}})\,ds + \theta \int_{t_k}^{t_{k+1}} (\sqrt{X_s} - \sqrt{X_{t_k}}  )\,dW_s.$$
Now \eqref{rec_1} gives
\begin{align*}
 e_{k+1} &=  \frac{1}{1+ \kappa \Delta} \left( e_k + \theta  \Big(\sqrt{X_{t_k}}- \sqrt{\Z_{t_k}} \Big) \Delta_k W + \rho_{k+1} \right),
 \end{align*} 
so 
$$ e_k = \sum_{\ell=0}^{k-1} \frac{\theta}{(1+ \kappa \Delta)^{k-\ell}} \Big(\sqrt{X_{t_{\ell}}}- \sqrt{\Z_{t_{\ell}}} \Big) \Delta_{\ell} W  +\sum_{\ell=0}^{k-1} \frac{1}{(1+ \kappa \Delta)^{k-\ell}} \rho_{\ell+1}.$$
Straightforward calculations using \eqref{dimp_niceform} yield
$$  \sup_{n \in \mathbb{N}} \sup_{k=0, \ldots, n} \, \mathbf{E} \Z_{t_k}  < \infty. $$ 
Then  applying the Burkholder-Davis-Gundy  inequality to the martingale
$$ M_{k}=  \sum_{\ell=0}^{k-1} (1+ \kappa \Delta)^{\ell} \Big(\sqrt{X_{t_{\ell}}}- \sqrt{\Z_{t_{\ell}}} \Big) \Delta_{\ell} W, \qquad k=0,1, \ldots $$
gives 
\begin{align} \label{crucial_est}
\mathbf{E} \sup_{k=0, \ldots, n} e_k^2   & \leq c  \sum_{\ell=0}^{n-1}  (1+ \kappa \Delta)^{2\ell} \mathbf{E} \left| \sqrt{X_{t_\ell}}- \sqrt{\Z_{t_\ell}} \right|^2 \Delta  \\ & \qquad \quad \nonumber +   c  \, \mathbf{E} \sup_{k=1, \ldots,n} \left| \sum_{\ell=0}^{k-1}  \frac{1}{(1+ \kappa \Delta)^{k-\ell}} \rho_{\ell+1} \right|^2 .\end{align}
Here and below  constants whose particular value is not important will be denoted  by $c$ regardless of their value.

It remains to   estimate   the terms on the right side of the  equation \eqref{crucial_est}. 
The previous Lemma  implies that
$$  
\mathbf{E}|\overline{Z}_{t_k} - \overline{X}_{t_k}| =   \mathbf{E}(\overline{Z}_{t_k} - \overline{X}_{t_k}),  
$$ 
so
$$ 
\mathbf{E}|\overline{Z}_{t_k} - X_{t_k}| \leq 2 \, \mathbf{E}|\overline{X}_{t_k} - X_{t_k}| + |\mathbf{E}(\overline{Z}_{t_k} - {X}_{t_k}) |.  
$$ 
Clearly, Theorem \ref{mainthm_sqr_euler} yields
$$  \max_{k=0, \ldots, n} \mathbf{E} |\overline{X}_{t_k} - X_{t_k}| \leq c \cdot \sqrt{| \log(\Delta)|} \cdot \sqrt{\Delta}.
 $$
 Moreover,  
$$ \mathbf{E}  \overline{Z}_{t_{k+1}} =    \mathbf{E} \overline{Z}_{t_k} + \kappa (\lambda - \mathbf{E}  \overline{Z}_{t_{k+1}}) \Delta,$$
which is the drift-implicit Euler approximation of 
$$ \mathbf{E} X_t = x_0 +  \int_0^t \kappa(\lambda - \mathbf{E} X_s) \,ds, \qquad t \in  [0,T],$$  at the discretization points $t_k=k \Delta$,  so
$$ \max_{k=0, \ldots, n}  |\mathbf{E} ({X}_{t_k} - \Z_{t_k})| \leq c \cdot \Delta.$$
 Hence
\begin{align*} 
  \max_{k=0, \ldots, n}  \mathbf{E} |\overline{X}_{t_k} - X_{t_k}| \leq c \cdot \sqrt{| \log(\Delta)|} \cdot \sqrt{\Delta} 
\end{align*} 
which gives
\begin{align} \sum_{k=0}^{n}  (1+ \kappa \Delta)^{2k} \mathbf{E} \left| \sqrt{X_{t_k}}- \sqrt{\Z_{t_k}} \right|^2 \Delta \leq c \cdot  \sqrt{| \log(\Delta)|} \cdot \sqrt{\Delta} \label{est_dimp_1}\end{align}
 since $|\sqrt{x}-\sqrt{y}|\leq \sqrt{|x-y|}$ for $x,y >0$ and $\sup_{n \in \mathbb{N}} \sup_{k=0, \ldots, n}  (1+ \kappa \Delta)^{2k}  < \infty$.
 
 \smallskip
 
 For the second term,  applying the Burkholder-Davies-Gundy inequality  and Jensen's inequality yield
\begin{align*}
  &  \mathbf{E} \sup_{k=1, \ldots, n} \left| \sum_{\ell=0}^{k-1} \frac{1}{(1+ \kappa \Delta)^{k-\ell}} \rho_{\ell+1} \right|^2 \\ & \qquad \leq c  \cdot \frac{1}{\Delta} \,   \sum_{k=0}^{n-1} \mathbf{E}\left| \int_{t_k}^{t_{k+1}} (X_t-X_{t_{k+1}}) \, dt \ \right|^2 + c   \sum_{k=0}^{n-1} \mathbf{E} \int_{t_k}^{t_{k+1}} \left| \sqrt{X_t}- \sqrt{X_{t_k}} \right|^2 \,dt .
  \end{align*}
 Now  
 $$ \mathbf{E} |X_t-X_s|^2 \leq c \cdot |t-s|, \qquad s,t \in [0,T], $$
so it follows that 
\begin{align}
 \mathbf{E} \sup_{k=1, \ldots, n} \left| \sum_{\ell=0}^{k-1} \frac{1}{(1+ \kappa \Delta)^{k-\ell}} \rho_{\ell+1} \right|^2 \leq c \cdot \sqrt{\Delta}, \end{align}
which completes  the proof of the proposition. 
\end{proof}

\smallskip

Alternatively, Proposition \ref{conv_dimp_mil} could have been obtained by deriving the pathwise convergence of the drift-implicit Milstein scheme and establishing the uniform integrability of the squared maximum error. Note that the above proof gives also the convergence order 1/4 up to a logarithmic term. However this rate seems to be suboptimal,  see  the following numerical example. 

\smallskip

\begin{example} {\rm The
Figures \ref{fig_cir_path_msq_1}  and  \ref{fig_cir_path_msq_2} show the empirical root mean square maximum error
 in the discretization points, i.e.  
$$  \left( \frac{1}{N} \sum_{i=1}^N    \max_{k=0, \ldots, n} |X_{t_k}^{*,(i)}- \X_{t_k}^{n,(i)} |^2\right)^{1/2},
$$ 
 versus the step size for the approximation of the CIR process.
Consider the 
 \begin{itemize}
\item[(i)]  truncated Euler scheme \eqref{euler_2}
\item[(ii)]  drift-implicit square-root Euler \eqref{sqr_euler}
\item[(iii)]   drift-implicit Milstein scheme \eqref{dimp_niceform} 
 \end{itemize}
  for the Scenarios I and II (see Example \ref{cir_scen_def}).  Scenario I satisfies the condition of Theorem \ref{mainthm_sqr_euler} and Proposition \ref{conv_dimp_mil} since $2 \kappa \lambda/\theta^2=2.011276\ldots $.This condition is violated in Scenario II where $2 \kappa \lambda /\theta^2=0.36$. In the latter scenario,  the truncation $\sqrt{x^+}$ in the definition of the schemes  (ii) and (iii) is used, since both discretization schemes may take negative values here.
 The numerical reference solution $X^*$ is computed in Scenario I using scheme (ii) with very small stepsize and in Scenario II with scheme (i) with a   very small stepsize. The number of repetitions of the Monte Carlo simulation is $N=5\cdot 10^4$.
\begin{figure}[htp]
 \leavevmode  \epsfig{figure=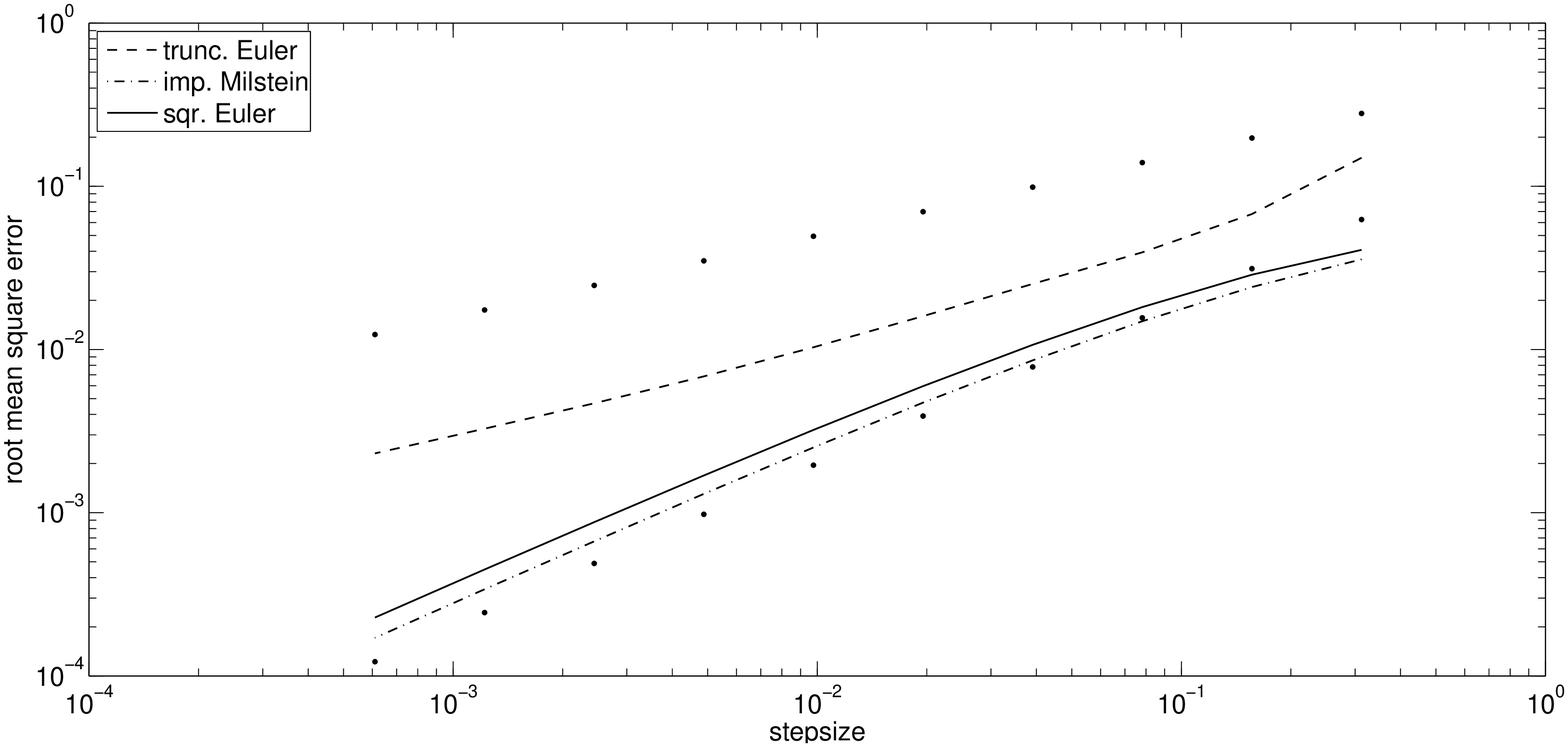, width=13cm,
height=8cm}  
\caption{\label{fig_cir_path_msq_1}Root mean square errors  vs.~step size for Scenario I of the CIR process}
\end{figure}
In the  log-log coordinates here,  the dots indicate the
convergence orders  $0.5$ and $1.0$ in Figure \ref{fig_cir_path_msq_1} and 0.25 and 0.5 in Figure  \ref{fig_cir_path_msq_2},  respectively. For Scenario I the empirical mean square error for the truncated Euler scheme  seems to decay with the order $0.5$, while the other schemes seem to have an empirical convergence order close to $1.0$. (For smooth and Lipschitz coefficients the Milstein scheme is of order one for the maximum error in the discretization points.)
\begin{figure}[htp]
 \leavevmode  \epsfig{figure=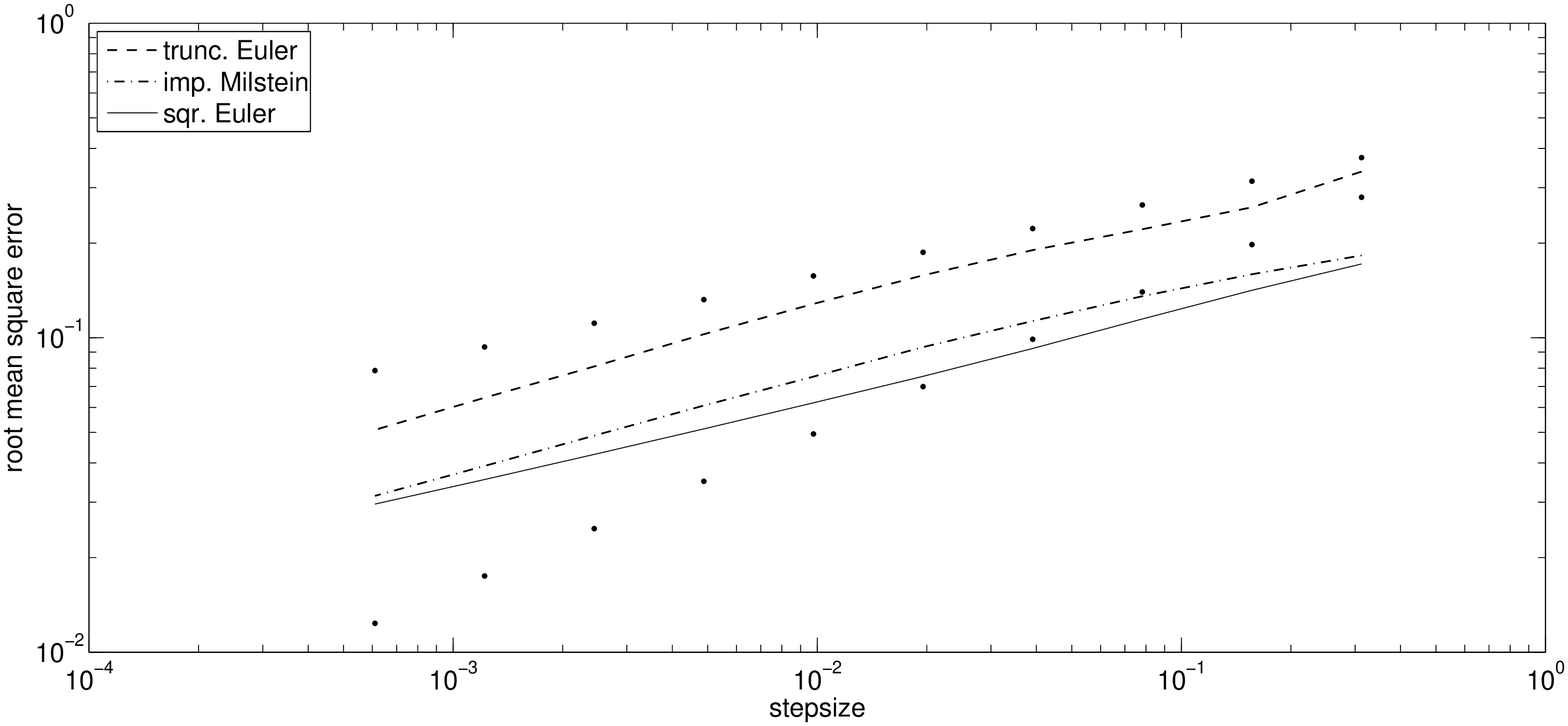, width=13cm,
height=8cm}  
\caption{\label{fig_cir_path_msq_2}Root mean square errors  vs.~step size for Scenario II of the CIR process}
\end{figure}

 For Scenario II, these convergence orders deteriorate and   for all schemes are significantly lower than one half, see also the following table, where   the convergence orders 
have been estimated by a linear regression.

\smallskip

\begin{center}
\begin{tabular}{c||c|c|c|c|} 
empirical conv. order / for & Sc. I 'part' & Sc. II 'part'  & Sc. I 'full' & Sc. II 'full' \\ \hline \hline
truncated Euler   & 0.5739 & 0.3193  &  0.6446 &    0.2960\\ \hline
drift-imp. square-root Euler  & 0.9281  &  0.2734 & 0.8491 &     0.2837 \\ \hline
drift-imp. Milstein &   0.9447 &   0.3096  & 0.8719 &   0.2871
\end{tabular}
\end{center} 
 
 \smallskip
 
Here 'part' denotes the results for the linear regression using only the step sizes  $\Delta=5\cdot 2^{-j}$, $j=7, \ldots, 13$, while 'full' uses the full data set, i.e. the step sizes  $\Delta=5\cdot 2^{-j}$, $j=4, \ldots, 13$. 
 }
  \begin{flushright} $\diamond$ \end{flushright}
\end{example} 
\smallskip

\smallskip
Applying  the Lamperti-transformation also to the asset price in the Heston model gives the log-Heston model
\begin{align*}
d \log(S_t) &= \left( \mu -\frac{1}{2} Y_t^2 \right)   dt + Y_t \left( \sqrt{1-\rho^2} d W^{(1)}_t + \rho dW^{(2)}_t \right), & \quad S_0=s_0 >0  \\ 
 dY_t &= \left( \frac{4 \kappa \lambda - \theta^2}{8}\frac{1}{Y_t}  - \frac{\kappa}{2} Y_t \right) \, dt + \frac{\theta}{2} dW^{(2)}_t, & \quad Y_0=y_0 >0.
\end{align*}
The approximation of the log-Heston price is then a simple integration problem. Using  the Euler scheme for the log-price equation and the drift-implicit square-root Euler scheme for the volatility process yields
an approximation $\overline{H}_{t_k}$ of $\log(S_{t_k})$ given by
$$ \overline{H}_{t_k}= \log(s_0)+ \sum_{\ell=0}^{k-1}\left( \mu -\frac{1}{2} \overline{Y}_{t_{\ell}}^2 \right)\Delta   +  \sum_{\ell=0}^{k-1} \overline{Y}_{t_{\ell}} \left( \sqrt{1-\rho^2} \Delta_{\ell} W^{(1)} + \rho \Delta_{\ell} W^{(2)} \right). $$  This is extended by piecewise linear interpolation to $[0,T]$. 

\smallskip

\begin{corollary} \label{maincor_sqr_euler}
Let $2\kappa \lambda > \theta^2$, $x_0>0$ and $T>0$. Then, for all $$ 1 \leq p < \frac{2 \kappa \lambda}{\theta^2} $$ there exists a constant $K_p >0$   such that
$$	  \left( \EX \max_{t \in [0,T]} |\log(S_{t}) - \overline{H}_t |^p  \right)^{1/p} \leq K_p \cdot \sqrt{| \log(\Delta)|} \cdot \sqrt{\Delta},$$
for all  $\Delta \in (0, 1/2]$.
\end{corollary}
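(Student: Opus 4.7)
My plan is to reduce the continuous-time supremum to a grid-point maximum and then invoke Theorem \ref{mainthm_sqr_euler} for the volatility factor. First I would exploit the piecewise linear structure of $\overline{H}$: for $t\in[t_k,t_{k+1}]$, $\overline{H}_t$ is a convex combination of $\overline{H}_{t_k}$ and $\overline{H}_{t_{k+1}}$, so
$$
\sup_{t\in[0,T]}|\log(S_t)-\overline{H}_t|\le \max_{k=0,\ldots,n}|\log(S_{t_k})-\overline{H}_{t_k}|+2\max_{k}\sup_{t\in[t_k,t_{k+1}]}|\log(S_t)-\log(S_{t_k})|.
$$
Since all moments of the CIR process $X=Y^2$ are finite, the log-Heston dynamics combined with the Burkholder--Davis--Gundy inequality yield $\mathbf{E}|\log(S_t)-\log(S_s)|^{p}\le C_p|t-s|^{p/2}$, and a standard chaining argument (cf.~\cite{tmg}) then bounds the grid-wise modulus term in $L^p$ by $C_p\sqrt{\Delta\,|\log\Delta|}$.

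Second, at the grid points $\overline{H}_{t_k}$ coincides with $\widehat{H}_{t_k}$, where
$$
\widehat{H}_t=\log(s_0)+\int_0^t\Bigl(\mu-\tfrac{1}{2}\overline{Y}_{\eta(s)}^2\Bigr)\,ds+\int_0^t\overline{Y}_{\eta(s)}\,dB_s,
$$
$\eta(s)=t_k$ for $s\in[t_k,t_{k+1})$, and $B_t=\sqrt{1-\rho^2}\,W^{(1)}_t+\rho\,W^{(2)}_t$ is a standard Brownian motion. Subtracting the It\^o representation of $\log(S_{t_k})$ gives
$$
\log(S_{t_k})-\overline{H}_{t_k}=-\tfrac{1}{2}\int_0^{t_k}\bigl(Y_s^2-\overline{Y}_{\eta(s)}^2\bigr)\,ds+\int_0^{t_k}\bigl(Y_s-\overline{Y}_{\eta(s)}\bigr)\,dB_s.
$$

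Third, I would estimate these two integrals separately. For the drift term the identity $Y_s^2-\overline{Y}_{\eta(s)}^2=X_s-\overline{X}_{\eta(s)}$ together with Minkowski's inequality reduces matters to bounding $\|X_s-X_{\eta(s)}\|_{L^p}+\|X_{\eta(s)}-\overline{X}_{\eta(s)}\|_{L^p}$: the former is $O(\sqrt{\Delta})$ by BDG applied to the CIR SDE together with uniform moment bounds on $X$, and the latter is handled directly by Theorem \ref{mainthm_sqr_euler}. For the stochastic integral, BDG applied to the discrete-time martingale $k\mapsto\int_0^{t_k}(Y_s-\overline{Y}_{\eta(s)})\,dB_s$ followed by Minkowski reduces matters to $\|Y_s-\overline{Y}_{\eta(s)}\|_{L^p}\le\|Y_s-Y_{\eta(s)}\|_{L^p}+\|Y_{\eta(s)}-\overline{Y}_{\eta(s)}\|_{L^p}$; the first summand is $O(\sqrt{\Delta})$ via the Lamperti SDE \eqref{sqr_X}, BDG and the finiteness of inverse moments $\mathbf{E}\,Y_u^{-q}=\mathbf{E}\,X_u^{-q/2}<\infty$, which holds under the Feller condition for sufficiently small $q$.

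The main obstacle will be transferring the $X$-rate of Theorem \ref{mainthm_sqr_euler} into a $Y$-rate for the remaining grid-point term $\|Y_{t_k}-\overline{Y}_{t_k}\|_{L^p}$, since the naive bound $|\sqrt{a}-\sqrt{b}|\le\sqrt{|a-b|}$ halves the exponent and destroys the rate. I would instead factor
$$
|Y_{t_k}-\overline{Y}_{t_k}|=\frac{|X_{t_k}-\overline{X}_{t_k}|}{Y_{t_k}+\overline{Y}_{t_k}}
$$
and apply H\"older together with the inverse-moment bounds for $Y$ and $\overline{Y}$ established in the proof of Theorem \ref{mainthm_sqr_euler} in \cite{dns}; both are finite under the Feller condition $2\kappa\lambda>\theta^2$, and keeping track of the admissible H\"older exponents propagates precisely the restriction $p<2\kappa\lambda/\theta^2$ into the final estimate. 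Combining all the bounds yields $(\mathbf{E}\max_k|\log(S_{t_k})-\overline{H}_{t_k}|^p)^{1/p}\le K_p\sqrt{\Delta\,|\log\Delta|}$, and together with the modulus bound of step one this proves the corollary.
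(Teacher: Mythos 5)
Your overall skeleton is sound and is essentially the route one would take: the paper itself states this corollary without proof (it is taken from \cite{dns}), and there too the argument splits the error into the grid-point error of $\log(S)$, handled through the drift term $\int_0^{t_k}(X_s-\overline{X}_{\eta(s)})\,ds$ and the martingale $\int_0^{t_k}(Y_s-\overline{Y}_{\eta(s)})\,dB_s$, plus an interpolation/modulus term of order $\sqrt{\Delta|\log \Delta|}$. The drift term and the modulus term are fine as you treat them.

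The genuine gap is in your final step, where you claim that the factorization $|Y_{t_k}-\overline{Y}_{t_k}|=|X_{t_k}-\overline{X}_{t_k}|/(Y_{t_k}+\overline{Y}_{t_k})$ together with H\"older "propagates precisely the restriction $p<2\kappa\lambda/\theta^2$". Write $\nu=2\kappa\lambda/\theta^2$. To bound $\|Y_{t_k}-\overline{Y}_{t_k}\|_{L^p}$ this way you need conjugate exponents $a,b>1$ with $pa<\nu$ (so that Theorem \ref{mainthm_sqr_euler} applies to $\|X_{t_k}-\overline{X}_{t_k}\|_{L^{pa}}$) and $pb$ below the maximal order of finite inverse moments of $Y_{t_k}+\overline{Y}_{t_k}$, which via $1/(Y+\overline{Y})\le 1/Y$ is $2\nu$ (since $\mathbf{E}\,Y_t^{-q}=\mathbf{E}\,X_t^{-q/2}<\infty$ iff $q<2\nu$). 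The two conditions $a<\nu/p$ and $a>2\nu/(2\nu-p)$ are compatible only if $p<\tfrac{2}{3}\nu$, so your argument does not reach the stated range $p<\nu$; worse, if $1<\nu\le 3/2$ it does not even cover $p=1$, and you give no inverse-moment bound for $\overline{Y}$ (whose lower bound is only of order $\sqrt{\Delta}$) that would repair this. The reason the full range is attainable in \cite{dns} is that the logic runs in the opposite direction: the primary estimate proved there is the $L^p$ rate for the transformed process $Y$ itself, obtained directly from the one-sided Lipschitz drift of the Lamperti SDE \eqref{sqr_X} (this is where the inverse moments of $X$, hence the restriction $p<\nu$, enter), and both Theorem \ref{mainthm_sqr_euler} and the log-Heston corollary are then deduced from the $Y$-estimate — that direction only multiplies by $Y+\overline{Y}$, which has all positive moments, so no range is lost. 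A smaller issue: your Minkowski reduction of the stochastic integral to pointwise norms $\|Y_s-\overline{Y}_{\eta(s)}\|_{L^p}$ requires $p\ge 2$; for $1\le p<2$ (unavoidable when $\nu\le 2$) you need either Jensen, which forces an $L^2$ bound possibly outside the admissible range, or a maximal $L^p$ bound for the $Y$-error — again pointing to the $Y$-error estimate, not the $X$-estimate, as the correct primitive.
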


\smallskip
Note that in the Heston model moment explosions may appear according to the parameters of the SDE. In particular, for $p>1$ one has
$ \mathbf{E}S_t^p < \infty$ for all $t>0$ if and only if
$$ \rho \leq  -\frac{\sqrt{p-1}}{\sqrt{p}} + \frac{\kappa}{\theta p}. $$
For more details see e.g. \cite{and_pit}.  Whether this  phenomenon also arises for discretization schemes for the Heston model is unknown at the time of writing.

\smallskip

\begin{example}  {\rm
In this example we test the efficiency of  the Multi-level Monte Carlo estimator $\widehat{P}_{ml}$ see \cite{g1,g2},
based on the above approximation scheme  for the valuation of a European Call option, i.e. for $$ p = e^{-rT} \mathbf{E} (S_T-K)^+.$$
The parameters for the Heston model are
\begin{align*}
& v_0=0.05, \quad \kappa=5.07, \quad \lambda=0.0457,\quad  \theta=0.48, \quad T=1 \\ &   s_0=100, \quad \mu=r=0.0319, \quad\rho=-0.7,  \quad K=105.\end{align*} (Since the riskfree measure is used for the valuation we have $\mu=r$.) In view of the above convergence result for the log-Heston model, we use
the number of levels $L=\lceil \log_{2}(T \varepsilon^{-1}) \rceil$ and the number  of repetitions $N_l = \lceil  L \varepsilon^{-2} T  2^{-\ell} \rceil$, $\ell=0, \ldots, L,$ for a given input  accuracy $\varepsilon >0$, see \cite{g1}. 

The table below shows the empirical root mean square error 
 $$ \textrm{rmsq}=\sqrt{\frac{1}{M}\sum_{i=1}^{M}  |  p - \widehat{P}^{(i)}_{ml}|^2}$$
for the Multi-level estimator  versus the required number of total Euler steps. The latter is proportional to the overall  computational cost of the estimator, i.e.  the number of used random numbers, number of function evaluations and number of arithmetic operations. The $\widehat{P}^{(i)}_{ml}$ are iid copies of the Multi-level estimator $\widehat{P}_{ml}$ and we use $M=5 \cdot 10^4$.
The reference value  $p=7.46253$ was obtained by a numerical evaluation of its Fourier transform representation, see e.g. \cite{CM}. 

For comparison, we also provide the corresponding numerical data for the standard Monte Carlo estimator $\widehat{P}_{st}$, see \eqref{MC_st}, for which we use the relation $\Delta^2 = T/N$ to match stepsize $\Delta$ and numbers of repetitions $N$.
For the same parameters as above the empirical root mean square error of $\widehat{P}_{st}$  is again estimated using $M=5\cdot 10^4$ repetitions.

\bigskip

\begin{center}
\begin{tabular}{c||r|c||r|c|}
$\varepsilon$ & Euler steps of $\widehat{P}_{ml}$ & $\textrm{rmsq}_{\textrm{emp}} $ of $\widehat{P}_{ml}$ &  Euler steps of $\widehat{P}_{st}$  & $\textrm{rmsq}_{\textrm{emp}} $ of $\widehat{P}_{st}$  \\  \hline \hline
$2^{-3}$ & 1056 &  1.369616  &  512 &   1.444497 \\ \hline
$2^{-4}$ & 7168 &  0.685299  & 4096 &    0.714207  \\ \hline
$2^{-5}$ & 43520 & 0.352762  &  32768  &  0.357962   \\ \hline
$2^{-6}$ & 245760  &  0.181384 &  262144  &    0.179231 \\  \hline
$2^{-7}$ & 1318912 & 0.093485  & 2097152 &  0.089618 \\ \hline
$2^{-8}$ & 6815744 & 0.047139 & 16777216  &  0.044821 \\ \hline
\end{tabular}
\end{center}

\bigskip
The numerical data are in good accordance with the predicted convergence behavior, that is
\begin{itemize}
 \item for the Multi-level estimator a root mean square error of order $\varepsilon$ for a computational cost of order $\varepsilon^{-2} |\log(\varepsilon)|^2$ 
\item and for the standard estimator a root mean square error of order $\varepsilon$ for a computational cost of order $\varepsilon^{-3}$. 
\end{itemize}
In particular halving the input accuracy leads  for both estimators (approximately) to a halving of the empirical root mean square error. Moreover, these results illustrate nicely the superiority
of the Multi-level estimator for small input accuracies.

}\end{example}

\bigskip
\bigskip

\section{Summary and Outlook}

In this article we gave a survey on recent results on the convergence of numerical methods for stochastic differential equations in mathematical finance. The presented results include:

\begin{itemize}
\item the pathwise convergence of general It\^o-Taylor schemes for strictly positive SDEs with smooth but not globally Lipschitz coefficients
(Section 2);
\item the construction of  structure, i.e. positivity, preserving approximation schemes  (Sections 2 and 5);
\item the strong convergence of Euler-type methods  for the CEV model and the CIR process (Section 3);
\item the explosion of the moments of the Euler scheme for SDEs for the 3/2-model (Section 3);
\item the strong convergence of the drift-implicit Euler scheme for the Ait-Sahalia model (Section 4);
\item strong convergence rates for the approximation of the CIR and the log-Heston model using a drift-implicit Euler-type method (Section 5).
\end{itemize}
However many unsettled questions are remaining: the exact strong convergence rate of the Euler scheme for the CEV  and CIR processes,  the existence or non-existence of moment explosions for approximation schemes of the Heston model, how to prevent moment explosions (if they happen) by simple modifications of the scheme etc. And even if these questions are answered, the question remains whether there is a 'general theory' for numerical methods 
for SDEs from mathematical finance or do these SDEs have to analysed one by one.
 So, the numerical analysis of  SDEs arising in finance will be still an active and challenging field of research in the future.

\bigskip
\bigskip

\bigskip
\bigskip

{\bf{Acknowledgements.}} The authors would like to thank Martin Altmayer, Martin Hutzenthaler and Arnulf Jentzen for valuable comments
and remarks on an earlier version of the manuscript. Moreover, the authors would like to thank  Mike Giles for a helpful discussion
concerning the numerical evaluation of  Fourier transforms.


\begin{thebibliography}{99}

\bibitem{AS} Ait-Sahalia, A.: Testing continuous-time models of the spot interest rate. {\it Rev. Financ. Stud.} {\bf 9}, no. 2,
 385--426 (1996)

\bibitem{AS_K} Ait-Sahalia, A., Kimmel, R.: Maximum Likelihood Estimation of Stochastic Volatility Models. {\it  J. Financ. Econ.} {\bf 83}, 413--452 (2007)

\bibitem{CM} 
Albrecher, H., Mayer, Ph., Schoutens, W., Tistaert, J.: The Little Heston Trap.  {\it Wilmott Magazine},  January Issue, 83--92. (2007)

\bibitem{alfonsi} Alfonsi, A.:
On the discretization schemes for the CIR (and Bessel squared) processes. 
{\it Monte Carlo Methods Appl.} {\bf 11},  355--384 (2005)




\bibitem{alfonsi_2}
Alfonsi, A.:
High order discretization schemes for the CIR process: Application to affine term structure and Heston models.
{\it Math. Comput.} \textbf{79}, no. 269, 209--237 (2010)


\bibitem{and_pit}
Andersen, L., Piterbarg, V.:
Moment explosions in stochastic volatility models. 
{\it Finance Stoch.} \textbf{11}, no. 1, 29--50 (2007)

\bibitem{arnold} Arnold, L.:
{\em Random Dynamical Systems}, Springer, Berlin (1998)


\bibitem{diop_2}
 Berkaoui, A., Bossy, M., Diop, A.:
Euler scheme for SDEs with non-Lipschitz diffusion coefficient: strong convergence. 
{\it ESAIM, Probab. Stat.} \textbf{12}, 1--11 (2008)



\bibitem{roberts}
Beskos, A., Papaspiliopoulos, O., Roberts G.:
Retrospective exact simulation of diffusion sample paths with applications. 
{\it Bernoulli} \textbf{12}, no. 6, 1077--1098 (2006)


\bibitem{diop}
Bossy, M., Diop, A.:
 An efficient discretization scheme for one dimensional SDEs with a diffusion coefficient function of the form $|x|^a, a \in [1/2,1).$
  Working paper, INRIA (2007)





\bibitem{bk} Broadie, M., Kaya, \"O.:
Exact simulation of stochastic volatility and other affine jump diffusion processes.
 {\it Oper. Res.} {\bf 54}, 217--231 (2006)



\bibitem{cox:1975} Cox, J.: Notes on option pricing I: Constant elasticity of variance diffusions. Working paper, Stanford University (1975)




\bibitem{cox:1985}
Cox, J., Ingersoll, J.,  Ross, S.:
A theory of the term structure of interest rates.
{\it Econometrica} {\bf 53}, 385--408 (1985)




\bibitem{delbaen}
 Deelstra, G., Delbaen, F.:
Convergence of discretized stochastic (interest rate) processes with stochastic drift term. 
{\it Appl. Stochastic Models Data Anal.} {\bf 14}, 77--84 (1998)

\bibitem{dns}
Dereich, S., Neuenkirch, A., Szpruch, L.: An Euler-type method for the strong approximation of the Cox-Ingersoll-Ross process. {\it Proc. Roy. Soc. A}, to appear 

\bibitem{db02}
Deuflhard, P., Bornemann, F.: {\it Scientific computing with
ordinary differential equations.}
Springer, New York (2002)


\bibitem{GAKN} Garrido-Atienza, M.J.,  Kloeden, P.E., Neuenkirch, A.:  Discretization of stationary solutions of
stochastic systems driven by fractional Brownian motion, {\em J. Appl  Math. Optim.} {\bf 60}, 151--172 (2009)

\bibitem{DG}
Duffie, D.,  Glynn, P.: Efficient Monte Carlo simulation of security
prices. {\it Ann. Appl. Probab.} \textbf{5}, no. 4, 897--905 (1995)

\bibitem{g1} Giles, M.: Multi-level Monte Carlo path simulation.   {\it Oper. Res.}
\textbf{56}, no. 3, 607--617 (2008)


\bibitem{g2} Giles, M.: Improved multilevel Monte Carlo convergence using the Milstein scheme. 
In: Keller, A.  (ed.) et al.,  {\it Monte Carlo and Quasi-Monte Carlo Methods 2006, Proceedings}.  Berlin, Springer, 343--354 (2007) 



\bibitem{jk}
 G\"unther, M.,  Kahl, C.,   Ro{\ss}berg, T.: Structure preserving stochastic integration schemes in interest rate derivative modeling. 
{\it Appl. Numer. Math.} \textbf{58}, no. 3, 284--295 (2008)
 

\bibitem{gm}
Makarov, R., Glew, D.: Exact simulation of Bessel diffusions. {\it Monte Carlo Methods Appl.} \textbf{16},  no. 3-4, 283--306 (2010)

\bibitem{gyoengy}
Gy\"ongy, I.: 
A note on Euler's approximations,
{\it Potential Anal.}  \textbf{8},   205--216 (1998)

 \bibitem{Gyoengy_Rasonyi} Gy\"ongy, I., R\'{a}sonyi, M.:
 A note on Euler approximations for SDEs with H\"older continuous diffusion coefficients,
{\it Stochastic Processes Appl.} \textbf{121}, no. 10, 2189--2200 (2011) 


\bibitem{hal} Halidas, H.:
Semi discrete approximations for stochastic differential
equations and applications. {\it Int. J. Comput. Math.}
\textbf{89}, no. 6, 780---794 (2012)



\bibitem{heston} Heston, S.:
A closed form solution for options with stochastic volatility, with
applications to bonds and currency options.  {\it Rev. Financial Stud.}
{\bf 6}, 327--343 (1993)

\bibitem{heston_2} Heston, S.: A simple new formula for options with stochastic volatility. Working paper, Washington University of
St. Louis (1997)

 
\bibitem{hm} Higham, D., Mao, X.:  Convergence of Monte Carlo simulations involving the mean-reverting square root process. {\it J. Comp. Fin.} \textbf{8}, 35--62 (2005)



\bibitem{HMS}
Higham, D., Mao, X.,  Stuart, A.:
 Strong convergence of Euler-type methods for nonlinear stochastic differential equations.
{\it SIAM J. Numer. Anal.} \textbf{40}, 1041--1063 (2002)

\bibitem{mils_szp}
Higham, D., Mao, X.,  Szpruch, L.:
Convergence, Non-negativity and Stability of a New
Milstein Scheme with Applications to Finance. Working paper (2012)
{\tt arXiv:1204.1647}


\bibitem{HK} Hurd, T.R., Kuznetsov, A.:
Explicit formulas for Laplace transforms of stochastic integrals.
{\it Markov Process. Relat. Fields} \textbf{14}, no. 2, 277--290 (2008)

\bibitem{HuJe} Hutzenthaler, M.,  Jentzen, A.:
Convergence of the stochastic Euler scheme for locally Lipschitz coefficients.
{\it Found. Comput. Math.} \textbf{11}, no. 6, 657--706 (2011)

\bibitem{HJ_uni} Hutzenthaler, M.,  Jentzen, A.: Numerical approximations of stochastic
differential equations with non-globally
Lipschitz continuous coefficients.
Working paper (2012) 
{\tt arXiv:1203.5809}

\bibitem{HMS1}
Hutzenthaler, M., Jentzen A., Kloeden, P.: Strong and weak divergence in finite time of Euler's method  for 
stochastic differential equations with non--globally Lipschitz coefficients. {\it Proc. Roy. Soc. London A} {\bf 467}, no. 2130, 1563--1576 (2011)

\bibitem{HMS2}  Hutzenthaler, M., Jentzen A., Kloeden, P.: Strong convergence  of an explicit numerical
 method  for SDEs with non--globally Lipschitz continuous coefficients. {\it Ann. Appl. Probab}, to appear

\bibitem{MLMC_expl} Hutzenthaler, M., Jentzen A., Kloeden, P.:
Divergence of the multilevel Monte Carlo Euler method for nonlinear stochastic differential equations.
Working paper (2011) 
{\tt arXiv:1105.0226}

 \bibitem{JKN}  Jentzen, A., Kloeden, P., Neuenkirch, A.: Convergence of numerical approximations of stochastic
differential equations on domains: higher order convergence rates without global Lipschitz coefficients.
{\em Numer. Math.}  {\bf 112}, no. 1, 41--64 (2009)


\bibitem{lms}  Kloeden, P.,  Neuenkirch, A.: The pathwise convergence of approximation schemes for stochastic differential 
equations. {\it LMS J. Comput. Math.} \textbf{10}, 235--253 (2007) 


\bibitem{KP} Kloeden, P., Platen, E.:
{\emph{Numerical Solution of Stochastic Differential Equations.}} 3rd edn, Springer, Berlin (1999)
 

\bibitem{lkd}
Lord, R., Koekkoek, R., van Dijk, D.:  A comparison of biased simulation schemes for stochastic volatility models. {\it Quant. Finan.} \textbf{10}, no. 2, 177--194 (2010)

\bibitem{M} Milstein, G.:
 {\it Numerical Integration of
  Stochastic Differential Equations.}  Kluwer, Doordrecht (1995)

\bibitem{Miltret} Milstein, G., Tretyakov, M.: Numerical integration of stochastic differential
equations with nonglobally Lipschitz coefficients. {\it SIAM J. Numer. Anal.} \textbf{43}, 1139--1154 (2005)

\bibitem{tmg} M\"uller-Gronbach, T.:
The optimal uniform approximation of systems of stochastic differential equations. 
{\it Ann. Appl. Probab.} \textbf{12}, no. 2, 664--690 (2002)

\bibitem{NZ} Neuenkirch, A., Z\"ahle, H.:
Asymptotic error distribution of the Euler method for SDEs with non-Lipschitz coefficients. {\it Monte Carlo Methods Appl.} \textbf{15}, no.4, 335--353 (2009) 

\bibitem{SMHJ} Szpruch, L., Mao, X., Higham, D., Pan, J.: Numerical simulation of a strongly nonlinear
Ait-Sahalia-type interest rate model. {\it BIT} \textbf{51}, 405--425 (2011)
\end{thebibliography}
\end{document}